\documentclass[reqno]{amsart}
\usepackage{amsmath,amsthm,amscd,amssymb,amsfonts, amsbsy}
\usepackage{latexsym, color, enumerate}
\usepackage{pxfonts}


\theoremstyle{plain}
\newtheorem{theorem}{Theorem}[section]
\newtheorem{lemma}[theorem]{Lemma}
\newtheorem{corollary}[theorem]{Corollary}
\newtheorem{proposition}[theorem]{Proposition}

\theoremstyle{definition}
\newtheorem{definition}[theorem]{Definition}

\newtheorem{example}[theorem]{Example}
\newtheorem{assumption}[theorem]{Assumption}

\theoremstyle{remark}
\newtheorem{remark}[theorem]{Remark}

\newcommand{\supp}{\operatorname{supp}}
\newcommand{\dist}{\operatorname{dist}}
\newcommand{\diam}{\operatorname{diam}}

\numberwithin{equation}{section}

\newcommand{\bI}{\mathbb{I}}

\newcommand{\bR}{\mathbb{R}}

\newcommand\cA{\mathcal{A}}
\newcommand\cB{\mathcal{B}}

\newcommand\cD{\mathcal{D}}
\newcommand\cE{\mathcal{E}}

\newcommand\cN{\mathcal{N}}

\newcommand\cM{\mathcal{M}}

\providecommand{\set}[1]{\{#1\}}

\providecommand{\abs}[1]{\lvert#1\rvert}
\providecommand{\Abs}[1]{\left\lvert#1\right\rvert}
\providecommand{\bigabs}[1]{\bigl\lvert#1\bigr\rvert}

\providecommand{\norm}[1]{\lVert#1\rVert}
\providecommand{\Norm}[1]{\left\lVert#1\right\rVert}
\providecommand{\bignorm}[1]{\bigl\lVert#1\bigr\rVert}

\renewcommand{\vec}[1]{\boldsymbol{#1}}


\def\Xint#1{\mathchoice
	{\XXint\displaystyle\textstyle{#1}}%
	{\XXint\textstyle\scriptstyle{#1}}%
	{\XXint\scriptstyle\scriptscriptstyle{#1}}%
	{\XXint\scriptscriptstyle\scriptscriptstyle{#1}}%
	\!\int}
\def\XXint#1#2#3{{\setbox0=\hbox{$#1{#2#3}{\int}$}
		\vcenter{\hbox{$#2#3$}}\kern-.5\wd0}}
\def\dashint{\Xint-}

\newcommand{\p}{\partial}
\newcommand{\epsi}{\varepsilon}

\begin{document}

\subjclass[2010]{Primary 35J25, 35B65; Secondary 35J15}

\keywords{Mixed boundary value problem, second-order elliptic equations of divergence form, Reifenberg flat domains, $W^{1,p}$ estimate and solvability.}

	\title[mixed boundary value problem]{optimal regularity for a Dirichlet-conormal problem in Reifenberg flat domain}
	
	\author[Jongkeun Choi]{Jongkeun Choi}
	
	\address{School of Mathematics\\Korea Institute for Advanced Study\\
		85 Hoegiro\\Dongdaemun-gu\\Seoul 02455\\Republic of Korea
		}
	
	\email{jkchoi@kias.re.kr}
	
	\author[Hongjie Dong]{Hongjie Dong}	
	
	\address{
Division of Applied Mathematics, Brown University, 182 George Street, Providence, RI 02912, USA}
	
	\email{hongjie\_dong@brown.edu}
\thanks{H. Dong and Z. Li were partially supported by the NSF under agreement DMS-1600593.}
	
	\author[Zongyuan Li]{Zongyuan Li}
	
	\address{Division of Applied Mathematics, Brown University, 182 George Street, Providence, RI 02912, USA}
	
	\email{zongyuan\_li@brown.edu}
	
\begin{abstract}
We study the divergence form second-order elliptic equations with mixed Dirichlet-conormal boundary conditions. The unique $W^{1,p}$ solvability is obtained with $p$ being in the optimal range $(4/3,4)$. The leading coefficients are assumed to have small mean oscillations and the boundary of domain is Reifenberg flat. We also assume that the two boundary conditions are separated by some Reifenberg flat set of co-dimension $2$ on the boundary.
\end{abstract}
\maketitle


\section{Introduction}
In this paper, we discuss the mixed boundary value problem for second-order elliptic operators:
\begin{equation}		\label{180126@eq1}
\begin{cases}
Lu =f+ D_i f_i  & \text{in }\, \Omega,\\
Bu = f_i  n_i  & \text{on }\, \cN,\\
u =0 & \text{on }\, \cD,
\end{cases}
\end{equation}
where $\Omega$ is a domain (not necessarily bounded) in $\bR^d$, $d\ge 2$ with the boundary divided into two non-intersecting portions $\cD$ and $\cN$.
The differential operator $L$ is in divergence form acting on real valued functions $u$ as follows:
\begin{equation*}
Lu=D_i(a_{ij}(x)D_j u+b_i(x) u)+\hat{b}_i(x) D_i u+c(x)u.
\end{equation*}
Here, all the coefficients are assumed to be bounded measurable, and the leading coefficients $a_{ij}$ are symmetric and uniformly elliptic.
We denote by $Bu=(a_{ij}D_j u+b_i u)n_i$ the conormal derivative of $u$ on $\cN$ associated with the operator $L$.
Dirichlet and conormal boundary conditions are prescribed on the portions $\cD$ and $\cN$ respectively, which are separated by their relative boundary $\Gamma\subset\p\Omega$. Both the equation and the boundary conditions are understood in the weak sense. For precise definition, see Definition \ref{def-weak-sln}.

As is well known, solutions to purely Dirichlet/conormal boundary value problems  are smooth when coefficients, data, and boundaries of domains are smooth.
However, for mixed boundary value problems, such a regularity result does not hold near the interface $\Gamma$, and the regularity of solutions depends also on that of $\Gamma$ and the way two boundary conditions meet
(e.g., the meeting angle and certain compatibility conditions).
For instance, the best possible regularity of derivatives of solutions to \eqref{180126@eq1} is
$$
Du \in L_p \quad \text{for }\, p<4
$$
when the two boundary portions meet tangentially (the angle between $\cD$ and $\cN$ is $\pi$); see Example \ref{eg-classical} for a classical counterexample.
In this paper, we investigate minimal regularity assumptions of $a_{ij}$, $\partial \Omega$, and $\Gamma$, which guarantee the above optimal regularity as well as the solvability of the mixed problem \eqref{180126@eq1}.

Regularity theory for mixed problems has been studied for a long time.
For the case when the two boundary portions $\cD$ and $\cN$ meet tangentially, we refer the reader to Shamir \cite{Sh} and Savar\'e \cite{Sav}.
In \cite{Sh}, the author proved $W^{1,4-\varepsilon}$ regularity for non-divergence form elliptic equations with smooth coefficients in half space.
He also obtained $W^{s,p}$ regularity on a smooth bounded domain with the indices $p>4$ and $s<1/2+2/p$.
At one end, the optimal $C^{1/2-\epsi}$-H\"older regularity can be obtained by passing $p\nearrow\infty$, which improved a general H\"older regularity result of De Giorgi's type by Stamppachia in \cite{Sta}.
It is also worth mentioning that in \cite{Sav}, the author proved optimal regularity in Besov space $B^{3/2}_{2,\infty}$ for the divergence form elliptic equations with Lipschitz coefficients on a $C^{1,1}$ domain.

For the case when $\cD$ and $\cN$ do not meet tangentially, we refer the reader to I. Mitrea-M. Mitrea \cite{MR2309180}, where the authors studied the mixed problem  \eqref{180126@eq1} with
$L u=\Delta u$.
They proved the $W^{1,p}$ solvability with
\begin{equation}		\label{190201@eq1}
\frac{3}{2+\varepsilon}<p<\frac{3}{1-\varepsilon} \quad \text{for some }\, \varepsilon=\varepsilon(\Omega, \cD, \cN)\in (0,1)
\end{equation}
on the so-called creased domains in $\bR^d$, $d\ge 3$, which means that $\cD$ and $\cN$ are separated by a Lipschitz interface and the angle between $\cD$ and $\cN$ is less than $\pi$.
This class of domains was introduced by Brown in \cite{B} to answer a question raised by Kenig in \cite{K} regarding the non-tangential maximal function estimate
$$
\norm{(\nabla u)^{*}}_{L_2(\p\Omega)}\le C\big(\norm{\nabla_{\rm{tan}} u}_{L_2(\cD)}+\norm{\p u/ \p n}_{L_2(\cN)}\big)
$$
of harmonic functions.
As mentioned in \cite{K}, the above regularity result can be false when $\Omega$ is smooth so that $\cD$ and $\cN$ meet tangentially, whereas it holds for purely Dirichlet/Neumann problem.
For further work in this direction, see \cite{MR2503013, OB} and the references therein.

In this paper, we work on the so-called ``Reifenberg flat'' domain, which is, roughly speaking, at every small scale the boundary is close to certain hyperplane. A Reifenberg flat domain is much more general than a Lipschitz domain with small Lipschitz constant: locally it is not given by a graph, and typically it contains fractal structures. The Reifenberg flat domain was introduced by Reifenberg in \cite{R} when he worked on the Plateau problem. Since then, there has been a lot of work on Reifenberg flat domains regarding minimal surfaces, harmonic measures, regularity of free boundaries, and divergence form elliptic/parabolic equations. An important fact for studying divergence form equation in such domains is that any small Reifenberg flat domain is a $W^{1,p}$-extension domain for every $p\in[1,\infty]$. Hence we have all the Sobolev inequalities up to the first order. For this result and the history of studying Reifenberg flat domains, one may refer to \cite{AMS}.

Notice that although on Reifenberg flat domain, neither the outer normal nor the trace operator of $W^{1,p}$ is defined, the weak formulation in Definition \ref{def-weak-sln} still makes sense due to the fact that no boundary integral term appears when $\Omega$ is smooth enough so that the outer normal and the trace operator are well defined.

We prove the solvability in Sobolev spaces $W^{1,p}$ and the $L_p$-estimates with $p$ being in the optimal range
$4/3<p<4$ for the mixed problem \eqref{180126@eq1} with BMO coefficients on Reifenberg flat domains.
The two boundary portions $\cD$ and $\cN$ are assumed to meet {\em{almost}} tangentially, which means $\cD$ and $\cN$ are separated by some Reifenberg flat set of co-dimension $2$ on the boundary.
We note that our result holds for both bounded and unbounded domains.
For the bounded domain case, we can further relax the assumptions on the source term.
As mentioned before, since Lipschitz domains with small Lipschitz constant are Reifenberg flat, our results can be applied also on creased domains.
Therefore, we see that in the restriction \eqref{190201@eq1}, the best possible range of $\varepsilon$ is $0<\varepsilon<1/4$ for creased domains with small  Lipschitz constant.

This paper is a continuation of \cite{DK11,DK12}, in which elliptic systems on Reifenberg flat domains with rough coefficients and purely Dirichlet/conormal boundary conditions were studied. See also the series \cite{BW1,BW2} regarding second-order equations on bounded domains. Our proof is mainly based on a perturbation argument suggested in \cite{Ca} by Caffarelli and Peral, by studying the level sets of maximal functions. The key step in our proof is to carefully design an approximation function near $\Gamma$, which combines the cut-off and reflection techniques in \cite{DK11, DK12}. Compared to the purely Dirichlet or purely conormal problems, the approximation function in our problem is less regular, which is only $W^{1,4-\epsi}$, not Lipschitz. This situation is similar to \cite{arXiv:1806.02635}, where dedicated decay rates of the level sets are required.

The paper is organized as follows. In Section 2, we introduce the basic notation, definitions, and assumptions. Our main results are given in Theorem \ref{thm-well-posedness} for both bounded and unbounded domains and in Theorem \ref{thm-bounded-domain} for bounded domains. In Section 3, we prove two useful tools for our problem: the local Sobolev-Poincar\'e inequality and the reverse H\"older inequality. Then in Section 4, we study a model problem, which is the $W^{1,4-\epsi}$ regularity of harmonic functions on the upper half space with mixed boundary conditions. With all these preparation, the proof of the main theorem including the approximation via cut-off and reflection, and the level set argument is presented in Section 5. In Section 6, we relax the regularity assumptions on the source term for the bounded domain case, mainly by solving a divergence form equation.
\section{Notation and Main Results}
Let $d$ be the space dimension. We write a typical point $x\in \bR^d$ as $x=(x',x'')$, where
$$x'=(x_1,x_2)\in \bR^2, \quad x''=(x_3,\cdots,x_d)\in \bR^{d-2}.$$
In the same spirit, for a domain $\Omega\subset \bR^d$ and $p,q\ge 1$, we define the anisotropic space $L_{p,x'}L_{q,x''}(\Omega)$ as the set of all measurable functions $u$ on $\Omega$ having a finite norm
$$
\|u\|_{L_{p,x'}L_{q,x''}(\Omega)}=\Bigg(\int_{\bR^2}\Bigg(\int_{\bR^{d-2}}\abs{u}^q \bI_{\Omega}\,dx''\Bigg)^{p/q}\,dx'\Bigg)^{1/p},
$$
where $\bI$ is the usual indicator function.
We abbreviate $L_{p, x'}L_{p,x''}(\Omega)=L_p(\Omega)$.
We will also use the notation
\begin{equation}		\label{181224@eq1}
\bR^d_{+}=\set{x=(x_1,\cdots,x_d)\in\bR^d : x_1>0},\quad B_R^{+}=\bR^d_+\cap B_R(0),
\end{equation}
$$
B'_{R}=\{x'\in \bR^2:|x'|<R\}, \quad (B'_R)^+=\bR^2_+\cap B'_R,
$$
and $\Omega_R(x)=\Omega\cap B_R(x)$ for all $x\in \bR^d$ and $R>0$.
	
Now we formulate our mixed boundary value problem. We consider domain $\Omega \subset \bR^d$ with boundary divided into two non-intersecting portions, and $\Gamma$ being the boundary of $\cD$ relative to $\p\Omega$:
$$\p\Omega=\cD\cup\cN, \quad \cD\cap \cN=\emptyset,\quad \Gamma=\p_{\p\Omega}\cD.$$
We need the following notation for Sobolev spaces with boundary conditions prescribed on the whole or part of the boundary.
For $1\le p\le \infty$, we denote by $W^{1,p}(\Omega)$ the usual Sobolev space and by $W^{1,p}_0(\Omega)$ the completion of $C^\infty_0(\Omega)$ in $W^{1,p}(\Omega)$, where $C^\infty_0(\Omega)$ is the set of all smooth, compactly supported functions in $\Omega$.
Similarly, we let $W^{1,p}_\cD(\Omega)$ be the completion of $C^\infty_\cD(\Omega)$ in $W^{1,p}(\Omega)$, where $C_\cD^\infty(\Omega)$ is the set of all smooth functions on $\overline{\Omega}$ which vanish in a neighborhood of $\cD$.

Let $L$ be a second-order elliptic operator in divergence form
$$
L u=D_i(a_{ij}(x)D_j u+b_i(x) u)+\hat{b}_i(x) D_i u+c(x)u,
$$
where the coefficients $\vec A=(a_{ij})_{i,j=1}^d$, $\vec b=(b_1,\ldots,b_d)$, $\hat{\vec b}=(\hat{b}_1,\ldots,\hat{b}_d)$, and $c$ are bounded measurable functions defined on $\overline{\Omega}$: for some positive constants $\Lambda$ and $K$, we have
$$
|\vec A|\le \Lambda^{-1}, \quad |\vec b|+|\hat{\vec b}|+|c|\le K.
$$
Note that the summation convention is adopted throughout this paper.
The leading coefficients $\vec A=(a_{ij})$ are also assumed to be symmetric, satisfy the uniformly ellipticity condition:
$$
\sum_{i,j=1}^d a_{ij}(x)\xi_j\xi_i\ge \Lambda |\xi|^2, \quad \forall \xi\in \bR^d, \quad \forall x\in \overline{\Omega}.
$$
We denote by
$$
Bu=(\vec A Du+\vec b u)\cdot n=(a_{ij}D_j u+b_i u)\,n_i
$$
the conormal derivative operator on the boundary of $\Omega$ associated with the operator $L$, where $n=(n_1,\ldots,n_d)$ is the outward unit normal to $\partial \Omega$.
We will see that in the weak formulation, this boundary condition is still well defined even when the outer unit normal is not defined point-wise.
Now we give the formal definition of weak solutions. Let $p\in (1,\infty)$.

\begin{definition}[Weak Solution]\label{def-weak-sln}
For $f, f_i\in L_p(\Omega)$, $i\in \{1,\ldots,d\}$,
we say that $u\in W^{1,p}_\cD(\Omega)$ is a weak solution to the mixed boundary value problem
\begin{equation}		\label{180417@eq2}
\begin{cases}
Lu =f+ D_i f_i  & \text{in }\, \Omega,\\
Bu =f_i  n_i  & \text{on }\, \cN,\\
u =0 & \text{on }\, \cD,
\end{cases}
\end{equation}
if
$$
\int_\Omega (- a_{ij}D_j u - b_i u) D_i \phi + (\hat{b}_i D_i u + cu)\phi\,dx=\int_\Omega f  \phi\,dx-\int_\Omega f_i D_i\phi\,dx
$$
holds for any $\phi\in W^{1,p/(p-1)}_{\cD}(\Omega)$.
\end{definition}

In this paper, we will work on the so-called Reifenberg flat domains, which is defined below in $(i)$. In $(ii)$, we assume that locally the two types of boundary conditions are almost separated: the relative boundary $\Gamma$ is also Reifenberg flat.
\begin{assumption}[$\gamma$]		\label{ass-RF}
There exists a positive constant $R_1$ such that the following hold.
\begin{enumerate}[$(i)$]
\item
For any $x_0\in \partial \Omega$ and $R\in (0,R_1]$,
there is a coordinate system depending on $x_0$ and $R$ such that in this new coordinate system (called the coordinate system associated with $(x_0, R)$), we have
\begin{equation*}	
\{y:x_{01}+\gamma R<y_1\}\cap B_R(x_0)\subset \Omega_R(x_0)\subset \{y:x_{01}-\gamma R<y_1\}\cap B_R(x_0).
\end{equation*}
\item
Let $\Gamma$ be the boundary (relative to $\partial \Omega$) of $\cD$. If $x_0\in \Gamma$ and $R\in (0, R_1]$, we can further require that the coordinate system defined in $(i)$ satisfy
$$
\Gamma \cap B_R(x_0)\subset \{y: |y'-x'|< \gamma R\}\cap B_R(x_0),
$$
$$
\big(\partial \Omega \cap B_R(x_0)\cap \{y: y_2>x_{02}+\gamma R\}\big)\subset \cD,
$$
$$
\big(\partial \Omega \cap B_R(x_0)\cap \{y: y_2<x_{02}-\gamma R\}\big)\subset \cN.
$$
\end{enumerate}
\end{assumption}
In this paper, we always assume that $\cD,\cN\neq \emptyset$,
since otherwise the boundary condition becomes purely conormal or Dirichlet. Corresponding results have been included in \cite{DK11,DK12}. See also \cite{BW1,BW2}.

We consider the equations with small ``BMO'' leading coefficients with a small parameter $\theta\in (0,1)$ to be specified later.
\begin{assumption}[$\theta$]\label{ass-smallBMO}
There exists $R_2\in (0,1]$ such that for any $x\in \overline{\Omega}$ and $r\in (0, R_2]$, we have
 $$
\dashint_{\Omega_r(x)}|a_{ij}(y)-(a_{ij})_{\Omega_r(x)}|\,dy < \theta.
 $$
\end{assumption}

In the following, we denote $R_0:= \min\set{R_1,R_2}$.

Now we can present our main result.
First, in $\Omega$ (bounded or unbounded) we consider the existence and uniqueness of $W^{1,p}_{\cD}$ weak solution to the following equation:
\begin{equation}		\label{eqn-main-large-lambda}
\begin{cases}
Lu-\lambda u= f+ D_i f_i  & \text{in }\, \Omega,\\
Bu =f_i n_i & \text{on }\, \cN,\\
u=0 &\text{on }\, \cD.
\end{cases}
\end{equation}
Compared to \eqref{180417@eq2}, here we introduce the $-\lambda u$ term to create the required decay at infinity for the unbounded domain case. For simplicity, we will use the following notation with $\lambda > 0$:
$$U:=\abs{Du}+\sqrt{\lambda}\abs{u},\quad F:=\sum_{i=1}^d\abs{f_i}+\frac{1}{\sqrt{\lambda}}\abs{f}.$$

\begin{theorem}\label{thm-well-posedness}
For any $p\in (4/3,4)$, we can find positive constants
$$
(\gamma_0,\theta_0)=(\gamma_0,\theta_0)(d,p,\Lambda), \quad \lambda_0=\lambda_0(d,p,\Lambda, R_0, K),
$$
such that the following holds. If Assumptions \ref{ass-RF} $(\gamma_0)$ and \ref{ass-smallBMO} $(\theta_0)$ are satisfied, and $\lambda > \lambda_0$, then for any $(f_i)_{i=1}^d\in (L_p(\Omega))^d$, $f\in L_p(\Omega)$ there exists a unique weak solution $u\in W^{1,p}_\cD(\Omega)$ to \eqref{eqn-main-large-lambda} satisfying
\begin{equation}\label{eqn-general}
\norm{U}_{L_p(\Omega)} \leq N \norm{F}_{L_p(\Omega)},
\end{equation}
where $N=N(d,p,\Lambda)$ is a constant.
\end{theorem}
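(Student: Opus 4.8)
The plan is to follow the Caffarelli--Peral perturbation scheme. The heart of the matter is an interior/boundary $W^{1,p}$ estimate, which one obtains by controlling the distribution function of the maximal function $\mathcal{M}(U^2)$ via comparison with solutions of a frozen-coefficient problem. Since the statement asserts both existence/uniqueness and the a priori bound \eqref{eqn-general}, I would split the work into three parts: (1) the a priori estimate \eqref{eqn-general} for solutions already known to lie in $W^{1,p}_\cD$; (2) existence via the method of continuity, starting from the $L_2$ theory; (3) uniqueness, which for $\lambda$ large follows directly from the estimate applied to the difference of two solutions with $F\equiv 0$.

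For part (1), I would first establish the base case $p=2$ by a standard energy estimate: testing the weak formulation against $u$ itself and using uniform ellipticity, the lower-order coefficient bound $|\vec b|+|\hat{\vec b}|+|c|\le K$, and absorbing cross terms via Young's inequality; the $-\lambda u$ term produces the $\sqrt{\lambda}\,\|u\|_{L_2}$ piece of $\|U\|_{L_2}$ once $\lambda > \lambda_0(d,\Lambda,K)$. Then I would run the level-set argument. The key geometric input is the trichotomy of balls: (a) balls well inside $\Omega$, where interior $W^{1,q}$ (any $q$) regularity of the frozen-coefficient equation applies; (b) balls centered near $\partial\Omega$ but away from $\Gamma$, where Assumption \ref{ass-RF}$(i)$ flattens the boundary and one invokes the purely Dirichlet or purely conormal $W^{1,q}$ theory from \cite{DK11,DK12}; (c) balls centered near $\Gamma$, where Assumption \ref{ass-RF}$(ii)$ flattens both $\partial\Omega$ and $\Gamma$ and one compares with the \emph{mixed} half-space model problem from Section 4. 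In each case one builds an approximation $w$ — using the cut-off and reflection construction advertised in the introduction — and estimates $U-W$ in $L_2$ by $\theta_0$ (from Assumption \ref{ass-smallBMO}) times the local energy plus a term controlled by $\mathcal{M}(F^2)$, while $W$ itself enjoys a higher integrability. Because the mixed model solution is only $W^{1,4-\epsi}$ rather than Lipschitz, the higher-integrability exponent for $W$ in case (c) is capped just below $4$; this is exactly why $p$ must stay below $4$, and by duality (the adjoint problem is again mixed Dirichlet--conormal) why $p$ must stay above $4/3$. The reverse Hölder inequality from Section 3 supplies the self-improving estimate that closes the iteration, and a standard good-$\lambda$ / measure-decay lemma then yields $\|\mathcal{M}(U^2)\|_{L_{p/2}}\lesssim \|\mathcal{M}(F^2)\|_{L_{p/2}} + (\text{lower order})$, hence \eqref{eqn-general} after absorbing the lower-order terms into the left side using $\lambda>\lambda_0$ once more and the Hardy--Littlewood maximal theorem (valid since $p/2>1$, i.e. $p>2$; for $p\le 2$ one instead argues by duality against the range $(4/3,2)$).

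For part (2), existence for $p=2$ is Lax--Milgram on the closed subspace $W^{1,2}_\cD(\Omega)$ (coercive for $\lambda>\lambda_0$), and for general $p\in(4/3,4)$ I would use the method of continuity along the segment from the Laplacian (or the constant-coefficient operator $D_i((a_{ij})_{B}D_j\cdot)$) to $L$, together with the a priori estimate \eqref{eqn-general}, which is uniform along the path since $\gamma_0,\theta_0,\lambda_0$ depend only on $d,p,\Lambda$ (and $R_0,K$); one needs the estimate to hold with the \emph{same} constants for every operator in the family, which it does because the hypotheses are stable under the interpolation. A density argument reduces to data $f,f_i$ smooth and compactly supported, for which the $W^{1,2}$ solution is seen to lie in $W^{1,p}$ by the estimate applied locally. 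Uniqueness (part 3) is immediate: the difference of two $W^{1,p}_\cD$ solutions solves the homogeneous problem, so \eqref{eqn-general} forces $U\equiv 0$.

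I expect the main obstacle to be case (c) of the level-set argument — the construction of the approximation function near $\Gamma$. One must patch together the Dirichlet-side cutoff/extension and the conormal-side even reflection across the two flattened hyperplanes $\{y_1=0\}$ and $\{y_2=0\}$ in such a way that the patched function is an admissible competitor for the mixed model problem, is close to $u$ in energy with the closeness controlled by $\gamma_0+\theta_0$, and inherits the sharp $W^{1,4-\epsi}$ bound of Section 4. Keeping track of the commutator terms generated by the cutoff (which are lower order, controlled by $R^{-1}$ times $\|u\|_{L_2}$ on an annulus, hence by $\mathcal{M}(U^2)$) and ensuring the reflection does not destroy the BMO-smallness of the coefficients will be the delicate bookkeeping; everything else is a faithful adaptation of \cite{DK11,DK12} and \cite{Ca}.
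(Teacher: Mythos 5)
Your proposal tracks the paper's overall architecture almost exactly: energy estimate for $p=2$, Caffarelli--Peral level-set argument with the same trichotomy of balls (interior / purely Dirichlet-or-conormal near $\partial\Omega$ / mixed near $\Gamma$), cut-off-plus-reflection approximation in the mixed case, reverse H\"older to close the iteration, method of continuity for existence, and duality for $p\in(4/3,2)$. That is the paper's proof.

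There is, however, one genuine gap in your existence argument for $p\in(4/3,2)$. You write that after approximating $f,f_i$ by smooth compactly supported data, ``the $W^{1,2}$ solution is seen to lie in $W^{1,p}$ by the estimate applied locally.'' On a bounded domain this would be routine, but the theorem covers \emph{unbounded} $\Omega$, and for $p<2$ a global $W^{1,2}$ bound together with local $W^{1,p}$ estimates does \emph{not} imply $u\in W^{1,p}(\Omega)$: one needs quantitative decay at infinity before H\"older's inequality can be used on annuli $\Omega_{k+1}\setminus\Omega_k$ to convert $L_2$ control into $L_p$ control. The paper closes this gap with a hole-filling estimate: testing the equation against $\eta^2 u$ with $\eta$ supported outside $B_k$ yields, for $\lambda$ large,
\begin{equation*}
\|Du\|_{L_2(\Omega_{k+1}^c)}^2 + \lambda\|u\|_{L_2(\Omega_{k+1}^c)}^2
\le \frac{N}{N+\lambda}\bigl(\|Du\|_{L_2(\Omega_k^c)}^2 + \lambda\|u\|_{L_2(\Omega_k^c)}^2\bigr),
\end{equation*}
hence exponential decay of $\|u\|_{W^{1,2}(\Omega_{k+1}\setminus\Omega_k)}$ in $k$, which makes the series $\sum_k\|u\|_{W^{1,2}(\Omega_{k+1}\setminus\Omega_k)}\,k^{(d-1)(1/p-1/2)}$ converge. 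Without some such decay argument, your a priori estimate does not apply (you cannot yet assert $u\in W^{1,p}_\cD$), so the duality step never gets off the ground; you should add this or an equivalent step.
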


When $\Omega$ is bounded, we have better results: instead of taking large $\lambda$, we can assume the usual sign condition $L1\le 0$, which is understood in the weak sense:
$$
\int_{\Omega} (-b_i D_i \phi+c \phi)\,dx\le 0
$$
for any $\phi\in W^{1,p/(p-1)}_{\cD}(\Omega)$ satisfying $\phi\ge 0$.
Also, the integrability of the non-divergence form source term $f$ can be generalized to $L_{p_{*}}$, where
\begin{equation}\label{eqn-def-p*}
p_{*}=\begin{cases}
pd/(p+d) &\text{when }\, p>d/(d-1),\\
1+\epsi &\text{when }\, p\leq d/(d-1)
\end{cases}
\end{equation}
for any $\epsi>0$.

\begin{theorem}\label{thm-bounded-domain}
Let $\Omega$ be a bounded domain in $\bR^d$.
For any $p\in(4/3,4)$, we can find positive constants $\gamma_0, \theta_0$ depending on $(d,p,\Lambda)$, such that the following holds. If Assumptions \ref{ass-RF} $(\gamma_0)$ and \ref{ass-smallBMO} $(\theta_0)$ are satisfied, and $L1\leq0$ in the weak sense, then for any $(f_i)_{i=1}^d\in (L_p(\Omega))^d$, $f\in L_{p_*}(\Omega)$ there exists a unique weak solution $u\in W^{1,p}_\cD(\Omega)$ to \eqref{180417@eq2} satisfying
\begin{equation}\label{eqn-a-priori-no-dependence}
\norm{u}_{W^{1,p}(\Omega)} \leq N\Bigg(\sum_{i=1}^d\norm{f_i}_{L_p(\Omega)} + \norm{f}_{L_{p_{*}}(\Omega)}\Bigg),
\end{equation}
where $N$ is a constant independent of $u,f_i$ and $f$.
\end{theorem}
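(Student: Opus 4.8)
The plan is to deduce Theorem~\ref{thm-bounded-domain} from Theorem~\ref{thm-well-posedness} in two steps: first remove the artificial zeroth-order term $-\lambda u$, then relax the integrability of the non-divergence source $f$ from $L_p$ to $L_{p_*}$. Fix $\lambda>\lambda_0$ as in Theorem~\ref{thm-well-posedness} and, for $g\in L_p(\Omega)$ and $\vec g=(g_1,\dots,g_d)\in L_p(\Omega)^d$, let $S(g,\vec g)\in W^{1,p}_\cD(\Omega)$ be the unique solution of $Lu-\lambda u=g+D_ig_i$; by Theorem~\ref{thm-well-posedness} the operator $S\colon L_p(\Omega)\times L_p(\Omega)^d\to W^{1,p}_\cD(\Omega)$ is bounded. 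Since $\Omega$ is bounded and, as recalled in the introduction, a $W^{1,p}$-extension domain, the embedding $W^{1,p}_\cD(\Omega)\hookrightarrow L_p(\Omega)$ is compact, so $T:=\lambda\,S(\cdot,0)\colon L_p(\Omega)\to L_p(\Omega)$ is compact and $I+T$ is a Fredholm operator of index zero. A direct computation shows that $u\in W^{1,p}_\cD(\Omega)$ solves \eqref{180417@eq2} (with $f,f_i\in L_p(\Omega)$) if and only if $(I+T)u=S(f,\vec f)$, and that $u\in\ker(I+T)$ if and only if $u\in W^{1,p}_\cD(\Omega)$ solves the homogeneous mixed problem $Lu=0$ in $\Omega$, $Bu=0$ on $\cN$, $u=0$ on $\cD$. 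Hence, once the homogeneous problem is known to have only the trivial solution, $I+T$ is invertible on $L_p(\Omega)$; this gives existence and uniqueness of $u$, and, writing $u=S(f,\vec f)-\lambda S(u,0)$ and using $\norm{u}_{L_p(\Omega)}\le\norm{(I+T)^{-1}}\,\norm{S(f,\vec f)}_{L_p(\Omega)}$, also the estimate $\norm{u}_{W^{1,p}(\Omega)}\le N\bigl(\sum_i\norm{f_i}_{L_p(\Omega)}+\norm{f}_{L_p(\Omega)}\bigr)$ with $N$ depending on $L$ and $\Omega$ only through $\norm{S}$ and $\norm{(I+T)^{-1}}$, not on the data.

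The crux of this first step is the uniqueness of the homogeneous mixed problem under $L1\le0$. Suppose first $p\ge 2$; then a homogeneous solution $u$ belongs to $W^{1,2}_\cD(\Omega)$ because $\Omega$ is bounded. Since the weak formulation in Definition~\ref{def-weak-sln} carries no boundary integral (the conormal condition being encoded in the test space), the weak maximum principle applies with $L1\le0$ in the role of the classical sign condition on the zeroth-order coefficient: for every $k\ge0$ the truncations $(u-k)^+$ and $(-u-k)^+$ lie in $W^{1,2}_\cD(\Omega)$, and, used as test functions, a De Giorgi/Moser iteration gives $\esssup_\Omega u\le0$; applying the same argument to $-u$ gives $\esssup_\Omega(-u)\le0$, so $u\equiv0$. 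For $4/3<p<2$ one first improves regularity: a homogeneous solution $u\in W^{1,p}_\cD(\Omega)$ solves $Lu=0$ locally, so the interior and boundary estimates of Section~5 — which yield $W^{1,q}_{\loc}$ regularity for every $q<4$ near $\Gamma$ and for every $q<\infty$ away from $\Gamma$ — show $u\in W^{1,2}_\cD(\Omega)$, reducing to the previous case. All of this uses only the standing hypotheses: uniform ellipticity, Assumptions~\ref{ass-RF} and \ref{ass-smallBMO}, and $L1\le0$.

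For the second step, given $f\in L_{p_*}(\Omega)$ we absorb it into the divergence part of the right-hand side. Let $w\in W^{1,p}_\cD(\Omega)$ solve the auxiliary mixed problem $\Delta w=f$ in $\Omega$, $w=0$ on $\cD$, with homogeneous conormal condition on $\cN$; since $\Delta1=0\le0$, the first step applies to $\Delta$, and $w$ is obtained by approximating $f$ in $L_{p_*}(\Omega)$ by functions $f_k\in L_p(\Omega)$, solving for $f_k$, and passing to the limit in $W^{1,p}_\cD(\Omega)$, once we have the a priori bound
\[
\norm{\nabla w}_{L_p(\Omega)}\le N\norm{f}_{L_{p_*}(\Omega)}.
\]
This bound follows by duality: for $\vec\psi=(\psi_1,\dots,\psi_d)\in L_{p'}(\Omega)^d$ with $1/p+1/p'=1$ (note $4/3<p'<4$ as well) let $v\in W^{1,p'}_\cD(\Omega)$ solve $\Delta v=D_i\psi_i$, again by the first step; testing the two equations against each other gives $\int_\Omega(D_iw)\,\psi_i\,dx=-\int_\Omega fv\,dx$. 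The Sobolev embedding $W^{1,p'}(\Omega)\hookrightarrow L_{(p_*)'}(\Omega)$ on the extension domain $\Omega$, together with $\norm{v}_{W^{1,p'}(\Omega)}\le N\norm{\vec\psi}_{L_{p'}(\Omega)}$ from the first step, yields $\norm{v}_{L_{(p_*)'}(\Omega)}\le N\norm{\vec\psi}_{L_{p'}(\Omega)}$; here $(p_*)'$ is exactly the Sobolev conjugate of $p'$ when $p>d/(d-1)$, whereas for $p\le d/(d-1)$ one has $p'\ge d$, so $W^{1,p'}(\Omega)$ embeds into every $L_q(\Omega)$ with $q<\infty$ and the choice $p_*=1+\epsi$ in \eqref{eqn-def-p*} suffices. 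Hence $\abs{\int_\Omega(D_iw)\,\psi_i\,dx}\le N\norm{f}_{L_{p_*}(\Omega)}\norm{\vec\psi}_{L_{p'}(\Omega)}$, and taking the supremum over $\vec\psi$ gives the displayed bound. Now set $\vec g:=\nabla w\in L_p(\Omega)^d$; then $f=D_ig_i$ in the weak sense against $W^{1,p'}_\cD(\Omega)$, so \eqref{180417@eq2} is equivalent to $Lu=D_i(f_i+g_i)$ with divergence-form data $\vec f+\vec g\in L_p(\Omega)^d$. The first step then gives the unique $u\in W^{1,p}_\cD(\Omega)$ with $\norm{u}_{W^{1,p}(\Omega)}\le N\norm{\vec f+\vec g}_{L_p(\Omega)}\le N\bigl(\sum_i\norm{f_i}_{L_p(\Omega)}+\norm{f}_{L_{p_*}(\Omega)}\bigr)$, which is \eqref{eqn-a-priori-no-dependence}, while uniqueness is unchanged since it concerns only the homogeneous problem.

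The parts relying on Theorem~\ref{thm-well-posedness}, the Fredholm alternative, and the reduction of the non-divergence source are routine bookkeeping. The point that genuinely needs care is the uniqueness of the homogeneous mixed problem: the weak maximum principle must be carried out with $L1\le0$ in place of the usual $c\le0$, and in the range $4/3<p<2$ — where solutions are a priori only in $W^{1,p}_\cD(\Omega)\not\subset W^{1,2}_\cD(\Omega)$ — it must be preceded by the boundary regularity of Section~5 to reach $W^{1,2}_\cD(\Omega)$. One must also follow the borderline Sobolev exponents closely in the duality argument so that the full range $4/3<p<4$ (equivalently $4/3<p'<4$) is covered; this is precisely where the two-case definition of $p_*$ is forced.
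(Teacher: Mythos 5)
Your overall skeleton matches the paper's: write $(I+T)u=R(\lambda,L)(f_i,0)$ with $T=\lambda R_\lambda\circ I$ compact, reduce to uniqueness of the homogeneous problem, split that into $p\ge 2$ (weak maximum principle with $L1\le0$) and $4/3<p<2$ (bootstrap), and handle $f\in L_{p_*}$ by absorbing it into the divergence part. Two comments, one a genuine gap and one a genuine alternative.

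The gap is in your uniqueness argument for $4/3<p<2$. You say the ``interior and boundary estimates of Section~5'' give $W^{1,q}_{\loc}$ for $q<4$ near $\Gamma$, hence $u\in W^{1,2}_\cD(\Omega)$. But Proposition~\ref{prop-regularity} (and all the machinery of Section~5, including the decomposition in Proposition~\ref{prop-decom} and the reverse H\"older inequality Lemma~\ref{lem-reverse-holder}) takes $u\in W^{1,2}_\cD(\Omega)$ as a \emph{hypothesis} and improves to $W^{1,p}$ with $p\in(2,4)$. It goes the wrong direction: there is no result in Section~5 that takes a $W^{1,p}_\cD$ solution with $p<2$ and upgrades it to $W^{1,2}_\cD$. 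The paper instead bootstraps using the \emph{solvability} Theorem~\ref{thm-well-posedness}: a homogeneous $v\in W^{1,p}_\cD(\Omega)$ satisfies $(L-\lambda)v=-\lambda v$, Sobolev embedding gives $v\in L_{pd/(d-p)}(\Omega)$, and uniqueness of the $W^{1,p^*}$ solution in Theorem~\ref{thm-well-posedness} with $p^*=\min\{pd/(d-p),2\}$ then forces $v\in W^{1,p^*}_\cD(\Omega)$; iterating reaches $W^{1,2}_\cD(\Omega)$ in finitely many steps. Your proposal should replace the vague appeal to Section~5 with this bootstrap.

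Your reduction of $f\in L_{p_*}(\Omega)$ to divergence form is a legitimate alternative to the paper's. The paper proves Lemma~\ref{lem-solve-divergence-eqn}: extend $\Omega$ past $\cD$ via Whitney cubes, verify the John condition on the enlargement, and invoke the Bogovski\u{\i}-type result of Acosta--Dur\'an--Muschietti to produce $\phi\in W^{1,p_*}_\cN(\Omega)^d$ with $D_i\phi_i=f$ and $\|\phi\|_{L_p}\le N\|f\|_{L_{p_*}}$, and this is done \emph{before} the Fredholm step. You instead solve the auxiliary mixed problem $\Delta w=f$ (using the first step for $\Delta$, which has $L1=0\le0$ and trivially satisfies Assumption~\ref{ass-smallBMO}), set $g_i=D_iw$, and obtain $\|\nabla w\|_{L_p}\le N\|f\|_{L_{p_*}}$ by duality against $\Delta v=D_i\psi_i$, checking that the Sobolev conjugate of $p'$ is exactly $(p_*)'$. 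The exponent bookkeeping in your duality is correct, and the approximation $f_k\to f$ in $L_{p_*}$ is handled by the a priori bound. Your route is self-referential (it invokes the first step for $\Delta$) but avoids the John-domain extension and Bogovski\u{\i} machinery; the paper's route is more elementary in that it does not rely on the solvability theorem at all. If you repair the $p<2$ bootstrap as above, your version is a correct and instructive variant.
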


In the above theorems, we always assume that
\begin{equation}		\label{190109@eq1}
p\in (4/3,4), \quad  \text{$\vec A=(a_{ij})_{i,j=1}^d$ is symmetric}.
\end{equation}
Indeed, by the Lax-Milgram Lemma and the reverse H\"older's inequality, when $p$ is close to $2$, the symmetry of $\vec A$ is not needed.
Otherwise, by the following two examples, we see that the restrictions in  \eqref{190109@eq1} are optimal for the solvability of mixed boundary value problems.
Precisely, based on a duality argument, Example \ref{eg-classical} shows the restriction $p\in (4/3,4)$ is optimal, and Example \ref{190109@ex1} shows the symmetry of $\vec A$ is required for the solvability in $W^{1,p}(\Omega)$ when $p$ is away from $2$.
Here, for the reader's convenience, we temporarily set
$$
\bR^2_+=\{x=(x_1,x_2)\in \bR^2:x_2>0\},
$$
which is different from that in \eqref{181224@eq1}.
Note that the examples below are applicable to higher dimensional cases by a trivial extension.

\begin{example}\label{eg-classical}
In $\bR^2_+$, let $u(x_1,x_2)={\rm Im}(x_1+ix_2)^{1/2}$.
One can simply check that
\begin{equation*}
\Delta u =0 \text{ in } \bR^2_{+}, \quad u=0 \text{ on } \p\bR^2_{+}\cap\set{x_1>0}, \quad \frac{\p u}{\p x_2}=0 \text{ on } \p\bR^2_{+}\cap\set{x_1<0}.
\end{equation*}
Since $Du$ is of order $r^{-1/2}$, one could also check that near the origin $Du\in L_p$ for any $p\in [1,4)$, but $Du\notin L_4$.
\end{example}

\begin{example}		\label{190109@ex1}
In $\bR^2_+$, let
$u(x_1,x_2)={\rm Im}(x_1+ix_2)^s$ with $s \in (0,1/2)$. We have
$$D_i(a_{ij}D_j u)=0 \text{ on } \bR^2_{+},\quad u=0 \text{ on } \p\bR^2_{+}\cap\set{x_1>0},\quad a_{ij}D_j u n_i=0 \text{ on }\p\bR^2_{+}\cap\set{x_1<0},
$$
where
$$(a_{ij})_{i,j=1}^{2} = \begin{bmatrix}
1 & \cot(\pi s)\\-\cot(\pi s) & 1
\end{bmatrix}.
$$
Since $Du$ is of order $r^{s-1}$, near the origin we only have $Du\in L_p$ only if $p<\frac{2}{1-s}$. Note that $\frac{2}{1-s}<4$ and $\frac{2}{1-s}\searrow 2$ as $s\searrow 0$.
\end{example}

\section{Local Poincar\'e Inequality and Reverse H\"older's Inequality}

In this section, we introduce two useful tools for our problem. The first one is the local Sobolev-Poincar\'e inequality. Notice that a Reifenberg flat domain intersecting with a ball might no longer be Reifenberg flat. We cannot simply localize to obtain the required local version, although Sobolev inequalities of $W^{1,p}$ hold for the Reifenberg flat domain since it is an extension domain.

\begin{theorem}[Local~Sobolev-Poincar\'e~inequality]		\label{180514@thm1}
Let $\gamma\in [0,1/48]$ and $\Omega\subset \bR^d$ be a Reifenberg flat domain satisfying Assumption \ref{ass-RF} $(\gamma)$ $(i)$.
Let $x_0\in \partial \Omega$ and $R\in (0, R_1/4]$.
Then, for any $p\in (1,d)$ and $u\in W^{1,p}(\Omega_{2R}(x_0))$, we have
$$
\|u-(u)_{\Omega_R(x_0)}\|_{L_{dp/(d-p)}(\Omega_R(x_0))}\le N\|Du\|_{L_p(\Omega_{2R}(x_0))},
$$
where $N=N(d,p)$.
\end{theorem}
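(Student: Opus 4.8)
The plan is to deduce the inequality from the homogeneous Sobolev inequality on all of $\bR^d$ via an extension operator; the main point is that, although $\Omega_r(x_0)$ fails to be Reifenberg flat (it has an ``edge'' where $\p\Omega$ meets $\p B_r(x_0)$), it is a bounded connected $(\varepsilon,\delta)$-domain in the sense of Jones whose parameters, after the natural rescaling by $r$, depend only on $d$. Throughout I fix the coordinate system associated with $(x_0,2R)$, write $a:=x_{01}$ and $q:=dp/(d-p)$, and I will use the elementary facts --- all immediate from Assumption \ref{ass-RF} $(i)$ for $\gamma\le 1/48$ --- that for $0<r\le 2R$ we have $|\Omega_r(x_0)|\ge c_d r^d$, that the open set $\{y:y_1<a-2\gamma R\}\cap B_{2R}(x_0)$ is disjoint from $\Omega$ so that $\p\Omega\cap B_{2R}(x_0)\subset\{|y_1-a|\le 2\gamma R\}$, and that $\Omega_r(x_0)$ is connected.

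First I would establish the geometric claim: for $R\le r\le 2R$, the domain $\Omega_r(x_0)$ is a bounded connected $(\varepsilon,\delta)$-domain with $\varepsilon\ge\varepsilon_0(d)>0$ and $\delta\ge\delta_0(d)\,r$. At points lying at definite distance from $\p B_r(x_0)$ the domain locally coincides with $\Omega$, which for $\gamma$ below a universal threshold is a uniform $(\varepsilon,\delta)$-domain (part of the extension theory of Reifenberg flat domains; see \cite{AMS}); at points at definite distance from $\p\Omega$ it locally coincides with the ball $B_r(x_0)$. The only delicate region is a neighborhood of the edge $E:=\p\Omega\cap\p B_r(x_0)$. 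The key point is that every $z\in E$ has $|z_1-a|\le 2\gamma R$, because $E\subset\p\Omega\cap B_{2R}(x_0)$ lies in the thin strip; hence the outward normal of $B_r(x_0)$ at $z$, namely $\approx(z-x_0)/r$, is within $O(\gamma)$ of being orthogonal to $e_1$, while near $z$ the set $\p\Omega$ is within $O(\gamma r)$ of a hyperplane whose normal is within $O(\gamma)$ of $-e_1$. So at scales comparable to $r$, $\Omega_r(x_0)$ near $z$ is an $O(\gamma)$-perturbation of a fixed right dihedral wedge; checking the twisted-cone condition for such a perturbed wedge, and patching its local cones with those of the other two regions, gives the asserted uniform parameters. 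I expect this step --- controlling quantitatively how close $\p\Omega_r(x_0)$ stays to a wedge near $E$ and verifying the cone condition uniformly in $(x_0,r)$ --- to be the main obstacle.

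Granting the geometric claim, two standard consequences follow with constants depending only on $(d,p)$ after rescaling. By Jones' extension theorem there is a bounded linear extension operator $\cE\colon W^{1,p}(\Omega_r(x_0))\to W^{1,p}(\bR^d)$ with $\|D(\cE g)\|_{L_p(\bR^d)}\le N\big(\|Dg\|_{L_p(\Omega_r(x_0))}+r^{-1}\|g\|_{L_p(\Omega_r(x_0))}\big)$. Combining this with the Rellich compactness theorem and connectedness --- the family $\{\Omega_r(x_0)\}$, rescaled to unit size, having uniform $(\varepsilon,\delta)$ parameters and a uniform lower volume bound --- one obtains the ordinary Poincar\'e inequality $\|g-(g)_{\Omega_r(x_0)}\|_{L_p(\Omega_r(x_0))}\le N(d,p)\,r\,\|Dg\|_{L_p(\Omega_r(x_0))}$. (Alternatively, these two facts may be quoted from \cite{DK11,DK12}.)

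Finally I would assemble the proof. Choose $\zeta\in C^\infty_0(B_{3R/2}(x_0))$ with $\zeta\equiv1$ on $B_R(x_0)$, $0\le\zeta\le1$, $|D\zeta|\le N/R$; set $c:=(u)_{\Omega_R(x_0)}$ and $v:=\zeta(u-c)\in W^{1,p}(\Omega_{3R/2}(x_0))$, which vanishes near $\p B_{3R/2}(x_0)$. Let $w:=\cE v\in W^{1,p}(\bR^d)$ with $r=3R/2$; multiplying by a cutoff we may take $w$ supported in $B_{2R}(x_0)$. Since $p<d$, the homogeneous Sobolev inequality on $\bR^d$ gives $\|w\|_{L_q(\bR^d)}\le N(d,p)\|Dw\|_{L_p(\bR^d)}$. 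Using the extension bound, then $|D\zeta|\le N/R$, then the Poincar\'e inequality on $\Omega_{3R/2}(x_0)$ (and replacing $(u)_{\Omega_R(x_0)}$ by $(u)_{\Omega_{3R/2}(x_0)}$, which costs only a constant since $|\Omega_R(x_0)|\approx|\Omega_{3R/2}(x_0)|\approx R^d$), one gets
\[
\|Dw\|_{L_p(\bR^d)}\le N\big(\|Du\|_{L_p(\Omega_{3R/2}(x_0))}+R^{-1}\|u-c\|_{L_p(\Omega_{3R/2}(x_0))}\big)\le N(d,p)\|Du\|_{L_p(\Omega_{2R}(x_0))}.
\]
As $w=v=u-c$ on $\Omega_R(x_0)$ (there $\zeta\equiv1$ and $\cE v=v$), this yields $\|u-(u)_{\Omega_R(x_0)}\|_{L_q(\Omega_R(x_0))}\le\|w\|_{L_q(\bR^d)}\le N(d,p)\|Du\|_{L_p(\Omega_{2R}(x_0))}$, which is the claim. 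One could instead avoid the $(\varepsilon,\delta)$ machinery by constructing directly an auxiliary Reifenberg flat domain $W$ with $\Omega_R(x_0)\subset W\subset\Omega_{2R}(x_0)$ --- ``rounding off'' $\Omega$ outside $B_R(x_0)$ --- and applying the global Sobolev--Poincar\'e inequality on $W$; the present route seems cleaner, since arranging the flatness of $\p W$ near $\p B_R(x_0)$ is itself somewhat delicate.
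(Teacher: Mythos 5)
The paper does not prove this theorem; it cites \cite[Theorem~3.5]{CDK18}, so the comparison is really about whether your argument stands on its own. It does not: the geometric claim at its base is unjustified and, I believe, false.

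You reduce everything to the assertion that, for $R\le r\le 2R$, the set $\Omega_r(x_0)$ is an $(\varepsilon,\delta)$-domain with $\varepsilon\ge\varepsilon_0(d)$ and $\delta\ge\delta_0(d)\,r$, and then invoke Jones' extension theorem and a Poincar\'e inequality. Your justification near the edge $E=\p\Omega\cap\p B_r(x_0)$ only works at scales comparable to $r$: there, $T_z\p B_r$ is nearly orthogonal to $e_1$ while the Reifenberg hyperplane for $\p\Omega$ at $(z,r)$ has normal nearly parallel to $e_1$, so the domain looks like a perturbed right wedge. But the $(\varepsilon,\delta)$ condition must hold at \emph{all} scales below $\delta$, and Reifenberg flatness allows the approximating hyperplane for $\p\Omega$ at $(z,\rho)$ to tilt by $O(\gamma)$ per dyadic scale. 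After $O(1/\gamma)$ dyadic steps, i.e.\ at scales $\rho\sim r\,2^{-c/\gamma}$, this hyperplane can rotate by $\pi/2$ and become nearly tangent to $\p B_r$. If it tilts so that the $\Omega$-side points out of $B_r$, then $\Omega\cap B_r(x_0)$ develops a thin wedge/outward cusp pinching off near a point of $E$, and the $(\varepsilon,\delta)$ constant of $\Omega_r(x_0)$ degenerates; at best $\varepsilon\sim\gamma$, and in a tangency scenario it fails altogether. So $\varepsilon\ge\varepsilon_0(d)$, uniform in $\gamma$ and in the point/scale, is not correct. Note that the paper itself flags exactly this obstruction at the start of Section~3 (``a Reifenberg flat domain intersecting with a ball might no longer be Reifenberg flat. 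We cannot simply localize\ldots''), which is precisely the statement that $\Omega_r(x_0)$ does not inherit good geometry.

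Two further points. First, the step ``combining Jones' theorem with Rellich compactness and connectedness\ldots one obtains the ordinary Poincar\'e inequality'' is not a proof of uniformity of the constant: compactness gives a constant for each fixed domain, and the family $\{\Omega_r(x_0)\}$ (even rescaled) is not compact, so one needs a quantitative Poincar\'e inequality, not a soft argument. Second, the assembly via $v=\zeta(u-c)$ and an extension operator is a sound template, but it is only as good as the extension/Poincar\'e inputs, so the gap above propagates to the conclusion. The feature that makes the theorem true is that the gradient on the right-hand side is taken over the \emph{larger} set $\Omega_{2R}(x_0)$: one can compensate for the bad geometry of $\Omega_R(x_0)$ near the spherical cut by routing chains of balls through $\Omega_{2R}(x_0)$. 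A correct proof exploits this slack directly (a chain/Boman-type covering with chains allowed to leave $B_R(x_0)$, or an extension by zero from $\Omega_{2R}(x_0)$ into $\Omega$ using the global extension property of $\Omega$, followed by the Euclidean Sobolev inequality). Your ``alternative'' suggestion of sandwiching a Reifenberg flat domain $W$ between $\Omega_R(x_0)$ and $\Omega_{2R}(x_0)$ is closer in spirit to what actually works, and should not be dismissed as secondary.
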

\begin{proof}
See \cite[Theorem 3.5]{CDK18}.
\end{proof}

\begin{corollary}		\label{180514@cor1}
Let $\gamma\in [0,1/48]$ and $\Omega\subset \bR^d$ be a Reifenberg flat domain satisfying Assumption \ref{ass-RF} $(\gamma)$ $(i)$.
Let $x_0\in \overline{\Omega}$, $R\in (0, R_1/4]$, and $\cD\subset \partial \Omega$ with $\cD\cap B_R(x_0)\neq \emptyset$.
If there exist $z_0\in \cD\cap B_R(x_0)$ and $\alpha\in (0,1)$ such that
\begin{equation}\label{eqn-mainly-Dirichlet}
B_{\alpha R}(z_0)\subset B_R(x_0), \quad \big(\partial \Omega\cap B_{\alpha R}(z_0)\big) \subset \big(\cD \cap B_R(x_0)\big),
\end{equation}
then the following hold.
\begin{enumerate}[$(a)$]
\item
For any $p\in (1,d)$ and $u\in W^{1,p}_{\cD}(\Omega)$, we have
$$
\|u\|_{L_{dp/(d-p)}(\Omega_R(x_0))}\le N\|Du\|_{L_p(\Omega_{2R}(x_0))},
$$
where $N=N(d,p, \alpha)$.
\item
For any $u\in W^{1,2}_{\cD}(\Omega)$, we have
$$
\|u\|_{L_2(\Omega_R(x_0))}\le NR\|Du\|_{L_2(\Omega_{2R}(x_0))},
$$
where $N=N(d,\alpha)$.
\end{enumerate}
\end{corollary}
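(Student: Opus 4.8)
The plan is to prove $(a)$ by combining a Poincar\'e inequality on the small ball $B_{\alpha R}(z_0)$, on which $u$ can be extended by zero because $\partial\Omega\cap B_{\alpha R}(z_0)\subset\cD$, with the global Sobolev--Poincar\'e inequality on $\Omega_R(x_0)$ provided by Theorem \ref{180514@thm1}, and then to deduce $(b)$ from $(a)$ by H\"older's inequality. Throughout I write $q=dp/(d-p)$, and I use that $\gamma\le 1/48$ so that Theorem \ref{180514@thm1} and the measure bounds below from Assumption \ref{ass-RF} $(\gamma)$ $(i)$ apply.

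First I would set up the zero extension. Since $u\in W^{1,p}_\cD(\Omega)$, pick $u_k\in C^\infty_\cD(\Omega)$ with $u_k\to u$ in $W^{1,p}(\Omega)$; each $u_k$ vanishes in a neighborhood of $\cD$, hence, since $\partial\Omega\cap B_{\alpha R}(z_0)\subset\cD$, in a full $\bR^d$-neighborhood of $\partial\Omega\cap B_{\alpha R}(z_0)$. Consequently $u_k\bI_\Omega$ is smooth on the whole ball $B_{\alpha R}(z_0)$ with gradient $(Du_k)\bI_\Omega$ a.e., and letting $k\to\infty$ shows $\bar u:=u\bI_\Omega\in W^{1,p}(B_{\alpha R}(z_0))$ with $D\bar u=(Du)\bI_\Omega$. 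By Assumption \ref{ass-RF} $(\gamma)$ $(i)$ at the boundary point $z_0$ and scale $\alpha R(\le R_1)$, the set $B_{\alpha R}(z_0)\setminus\Omega$ contains a region comparable to a half-ball, so $|B_{\alpha R}(z_0)\setminus\Omega|\ge N(d)^{-1}|B_{\alpha R}(z_0)|$. Feeding this into the ball Sobolev--Poincar\'e inequality $\|\bar u-(\bar u)_{B_{\alpha R}(z_0)}\|_{L_q(B_{\alpha R}(z_0))}\le N(d,p)\|D\bar u\|_{L_p(B_{\alpha R}(z_0))}$, together with the elementary bound $|(\bar u)_{B_{\alpha R}(z_0)}|\,|B_{\alpha R}(z_0)\setminus\Omega|\le\int_{B_{\alpha R}(z_0)}|\bar u-(\bar u)_{B_{\alpha R}(z_0)}|$ (valid since $\bar u=0$ on $B_{\alpha R}(z_0)\setminus\Omega$), I obtain the local Poincar\'e inequality
\[
\|u\|_{L_q(\Omega_{\alpha R}(z_0))}\le N(d,p)\,\|Du\|_{L_p(\Omega_{\alpha R}(z_0))}.
\]

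Next I would patch this to all of $\Omega_R(x_0)$. Set $c:=(u)_{\Omega_R(x_0)}$. By Theorem \ref{180514@thm1} when $x_0\in\partial\Omega$ — and, when $x_0\in\Omega$ (so that $\dist(x_0,\partial\Omega)<R$, since $\cD\cap B_R(x_0)\neq\emptyset$), by the classical ball estimate if $B_R(x_0)\subset\Omega$, or by Theorem \ref{180514@thm1} on finitely many boundary balls of radius comparable to $\dist(x_0,\partial\Omega)$ together with a chaining argument otherwise — one has $\|u-c\|_{L_q(\Omega_R(x_0))}\le N(d,p)\|Du\|_{L_p(\Omega_{2R}(x_0))}$. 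Since $\Omega_{\alpha R}(z_0)\subset\Omega_R(x_0)\subset\Omega_{2R}(x_0)$, this and the local Poincar\'e inequality give $|c|\,|\Omega_{\alpha R}(z_0)|^{1/q}\le N(d,p)\|Du\|_{L_p(\Omega_{2R}(x_0))}$; using $|\Omega_{\alpha R}(z_0)|\ge N(d)^{-1}(\alpha R)^d$ (again Assumption \ref{ass-RF} $(\gamma)$ $(i)$ at $z_0$) and $|\Omega_R(x_0)|\le N(d)R^d$, the triangle inequality yields $(a)$ with $N=N(d,p,\alpha)$, the $\alpha$-dependence entering only through the factor $\alpha^{-d/q}$. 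For $(b)$, apply $(a)$ with $p=2$ when $d\ge3$ and with, say, $p=4/3$ when $d=2$ (any $p\in(1,\min\{2,d\})$ works, and $p=2d/(d+1)$ works uniformly in $d\ge2$), then bound $\|u\|_{L_2(\Omega_R(x_0))}$ by $\|u\|_{L_q(\Omega_R(x_0))}$ and $\|Du\|_{L_p(\Omega_{2R}(x_0))}$ by $\|Du\|_{L_2(\Omega_{2R}(x_0))}$ via H\"older; the accompanying powers of $|\Omega_R(x_0)|,|\Omega_{2R}(x_0)|\le N(d)R^d$ combine to exactly one power of $R$, giving $(b)$ with $N=N(d,\alpha)$.

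The main obstacle is the zero-extension step: on a Reifenberg flat domain neither the outer normal nor the $W^{1,p}$-trace is defined pointwise, so the statement ``$u$ vanishes on $\partial\Omega\cap B_{\alpha R}(z_0)$'' has meaning only through the defining approximation of $W^{1,p}_\cD(\Omega)$, and I must verify that $u\bI_\Omega$ genuinely has weak gradient $(Du)\bI_\Omega$ with no distributional mass supported on $\partial\Omega$ — which is exactly why it is crucial that the approximants vanish in an \emph{open} neighborhood of $\cD$, not merely on $\cD$. A secondary difficulty is supplying, in the patching step, a Sobolev--Poincar\'e inequality over $\Omega_R(x_0)$ for an interior point $x_0$ lying within distance $R$ of $\partial\Omega$, where $\Omega_R(x_0)$ need not be connected; this is handled by Theorem \ref{180514@thm1} together with a finite covering. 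Everything else is routine bookkeeping.
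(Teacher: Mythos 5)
Your proof is correct and follows essentially the same route as the paper's: zero-extend across $\partial\Omega\cap B_{\alpha R}(z_0)\subset\cD$ to obtain a boundary Sobolev--Poincar\'e inequality on $\Omega_{\alpha R}(z_0)$ (using the measure lower bound $|B_{\alpha R}(z_0)\setminus\Omega|\gtrsim(\alpha R)^d$ from the Reifenberg condition), combine with Theorem \ref{180514@thm1} on $\Omega_R(x_0)$, and patch the average $(u)_{\Omega_R(x_0)}$ via the triangle inequality; then derive $(b)$ from $(a)$ by H\"older. You are in fact slightly more careful than the paper at two points it passes over quickly (justifying that $u\bI_\Omega$ has weak gradient $(Du)\bI_\Omega$ via the defining $C^\infty_\cD$ approximation, and the case $x_0$ interior when invoking Theorem \ref{180514@thm1}, which is stated for $x_0\in\partial\Omega$). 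One minor slip in your parenthetical for $(b)$: it is not true that \emph{any} $p\in(1,\min\{2,d\})$ works, because the H\"older step $\|u\|_{L_2(\Omega_R(x_0))}\le|\Omega_R(x_0)|^{1/2-1/q}\|u\|_{L_q(\Omega_R(x_0))}$ requires $q=dp/(d-p)\ge 2$, i.e.\ $p\ge 2d/(d+2)$ (the paper takes $p\in(\tfrac{2d}{d+2},2)$); your explicit choices $p=2$ for $d\ge3$, $p=4/3$ for $d=2$, and $p=2d/(d+1)$ all satisfy this, so the argument goes through.
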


\begin{proof}
The assertion $(b)$ is a simple consequence of the assertion $(a)$.
Indeed, by taking $p\in \big(\frac{2d}{d+2}, 2\big)$, and using  H\"older's inequality and the assertion $(a)$,  we have
$$
\begin{aligned}
\|u\|_{L_2(\Omega_R(x_0))}&\le NR^{d/2-d/p+1}\|u\|_{L_{dp/(d-p)}(\Omega_R(x_0))}\\
&\le NR^{d/2-d/p+1}\|Du\|_{L_p(\Omega_{2R}(x_0))}\le N R\|Du\|_{L_2(\Omega_{2R}(x_0))}.
\end{aligned}
$$
To prove the assertion $(a)$,
we extend $u$ by zero on $B_{\alpha R}(z_0)\setminus \Omega$ so that $u\in W^{1,p}(B_{\alpha R}(z_0))$.
Since $|B_{\alpha R}(z_0)\setminus \Omega|\ge N(d)(\alpha R)^d$,
by the boundary Poincar\'e inequality, we have
\begin{equation}		\label{180514@A1}
\|u\|_{L_{dp/(d-p)}(\Omega_{\alpha R}(z_0))}\le N(d,q)\|Du\|_{L_p(\Omega_{\alpha R}(z_0))}.
\end{equation}
Notice from the triangle inequality and H\"older's inequality that
$$
\begin{aligned}
&\|u\|_{L_{dp/(d-p)}(\Omega_R(x_0))}\\
&\le \|u-(u)_{\Omega_R(x_0)}\|_{L_{dp/(d-p)}(\Omega_R(x_0))} +\|(u)_{\Omega_R(x_0)}-(u)_{\Omega_{\alpha R}(z_0)}\|_{L_{dp/(d-p)}(\Omega_R(x_0))}\\
&\quad +\|(u)_{\Omega_{\alpha R}(z_0)}\|_{L_{dp/(d-p)}(\Omega_R(x_0))}\\
&\le N\alpha^{1-d/p}\big(\|u-(u)_{\Omega_R(x_0)}\|_{L_{dp/(d-p)}(\Omega_R(x_0))} + \|u\|_{L_{dp/(d-p)}(\Omega_{\alpha R}(z_0))}\big).
\end{aligned}
$$
This combined with Theorem \ref{180514@thm1} and \eqref{180514@A1} gives the desired estimate.
\end{proof}

In the rest of the section, we shall prove the reverse H\"older's inequality for the following mixed boundary value problem without lower order terms
\begin{equation}		\label{eqn-main-no-lower}
\begin{cases}
D_i(a_{ij}D_j u)-\lambda u= f+ D_i f_i  & \text{in }\, \Omega,\\
a_{ij}D_j u n_i = f_i  n_i & \text{on }\, \cN,\\
u=0 & \text{on }\, \cD.
\end{cases}
\end{equation}
Here, we do not impose any regularity assumption (including the symmetry condition) on the coefficients $a_{ij}$.
Recall the notation that for $\lambda>0$,
$$U:=\abs{Du}+\sqrt{\lambda}\abs{u},\quad F:=\sum_{i=1}^d\abs{f_i}+\frac{1}{\sqrt{\lambda}}\abs{f}.$$

\begin{lemma}		\label{180514@lem1}
Let $\gamma\in (0,1/48]$, $\frac{2d}{d+2}<p<2$, and $\Omega\subset \bR^d$ be a Reifenberg flat domain satisfying Assumption \ref{ass-RF} $(\gamma)$.
Suppose that $u\in W^{1,2}_\cD(\Omega)$ satisfies \eqref{eqn-main-no-lower} with $f_i,f\in L_2(\Omega)$.
Let $x_0\in \overline{\Omega}$ and $R\in (0, R_1]$, satisfying either
$$
B_{R/16}(x_0)\subset \Omega \quad \text{or}\quad x_0\in \partial \Omega.
$$
Then, when $\lambda>0$, we have
$$
\int_{\Omega_{R/32}(x_0)}U^2\,dx\le NR^{d(1-2/p)}\Bigg(\int_{\Omega_{R}(x_0)}U^p\,dx\Bigg)^{2/p}+N\int_{\Omega_{R}(x_0)}F^2\,dx.
$$
When $\lambda=0$ and $f\equiv0$, we have
$$
\int_{\Omega_{R/32}(x_0)}|Du|^2\,dx\le NR^{d(1-2/p)}\Bigg(\int_{\Omega_{R}(x_0)}|Du|^p\,dx\Bigg)^{2/p}+N\int_{\Omega_{R}(x_0)}|f_i|^2\,dx.
$$
In the above, the constant $N$ depends only on $d$, $p$, and $\Lambda$.
\end{lemma}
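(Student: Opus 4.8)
The plan is to prove the reverse Hölder inequality by the standard Caccioppoli-plus-Sobolev-Poincaré route, treating the three geometric cases separately: the interior case $B_{R/16}(x_0)\subset\Omega$, the boundary case with $x_0$ far from $\Gamma$ (so that on the relevant ball the boundary condition is purely Dirichlet or purely conormal), and the boundary case with $x_0$ near $\Gamma$. First I would establish the Caccioppoli inequality: testing \eqref{eqn-main-no-lower} with $\phi=\zeta^2 u$ for a cutoff $\zeta$ supported in $B_{\rho}(x_0)$ and equal to $1$ on $B_{\rho/2}(x_0)$ with $|D\zeta|\le N/\rho$ — this is a legitimate test function in $W^{1,2}_\cD(\Omega)$ since $\zeta^2 u$ vanishes near $\cD$ whenever $u$ does — and using uniform ellipticity and the bound $|\vec A|\le\Lambda^{-1}$ together with Young's inequality to absorb the gradient term, one gets
\begin{equation*}
\int_{\Omega_{\rho/2}(x_0)}U^2\,dx\le \frac{N}{\rho^2}\int_{\Omega_\rho(x_0)}u^2\,dx+N\int_{\Omega_\rho(x_0)}F^2\,dx
\end{equation*}
in the case $\lambda>0$, and the analogous inequality with $U$ replaced by $|Du|$ and $F$ by $\sum|f_i|$ when $\lambda=0$, $f\equiv0$. (The term $\lambda\zeta^2 u^2$ has a favorable sign and is kept on the left to produce the $\sqrt\lambda|u|$ part of $U$.)

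Next I would convert the $L_2$ norm of $u$ on the right into an $L_p$ norm of $u$, and then into an $L_p$ norm of $Du$, using the appropriate Poincaré-type inequality on the ball scale. In the interior case and in the boundary case near $\Gamma$, $u$ itself need not vanish on enough of the boundary, so I would subtract the average: by Hölder $\|u\|_{L_2(\Omega_\rho)}\le N\rho^{d(1/2-1/p)}\|u\|_{L_{dp/(d-p)}(\Omega_\rho)}$ after passing to the Sobolev exponent, but more efficiently I would iterate with a sequence of radii, writing $u=(u-(u)_{\Omega_\rho})+(u)_{\Omega_\rho}$, bounding the mean-zero part by Theorem \ref{180514@thm1} (Local Sobolev-Poincaré) and handling the constant $(u)_{\Omega_\rho}$ by a telescoping/Widman hole-filling argument across dyadic annuli between $R/32$ and $R$ — this is the standard mechanism that turns a Caccioppoli inequality with $u^2$ on the right into one with $|Du|^p$ on the right and upgrades from $\rho/2$ to a fixed fraction of $R$. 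In the boundary case away from $\Gamma$, there is a point $z_0\in\cD$ with $B_{\alpha R}(z_0)$ contained in a comparable ball and $\partial\Omega\cap B_{\alpha R}(z_0)\subset\cD$, so Corollary \ref{180514@cor1}(a),(b) applies directly: $\|u\|_{L_2(\Omega_\rho)}\le N\rho\|Du\|_{L_2(\Omega_{2\rho})}$, and then Hölder again to drop to exponent $p$ via the Sobolev embedding, which is exactly why the hypothesis $\frac{2d}{d+2}<p<2$ is imposed (it makes $p$ subcritical so that $\frac{dp}{d-p}\ge 2$). One has to check that on the scales used the geometric hypothesis \eqref{eqn-mainly-Dirichlet} of the corollary is met with a uniform $\alpha=\alpha(\gamma)$, which follows from Assumption \ref{ass-RF}: if $x_0\in\partial\Omega$ is at distance $\ge cR$ from $\Gamma$, flatness of $\Gamma$ lets us find $z_0$ with a definite portion of the boundary near it lying entirely in $\cD$ (or entirely in $\cN$, in which case $u$ is handled by the mean-subtraction argument and a conormal Caccioppoli estimate with no boundary term).

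The main obstacle is the case $x_0\in\Gamma$ (or within $\gamma R$ of it): there $u$ satisfies Dirichlet conditions on only part of the boundary and the conormal condition on the rest, so neither a clean Poincaré with zero boundary values nor a clean conormal Caccioppoli is available, and — crucially — the solution is genuinely only $W^{1,4-\epsi}$ near $\Gamma$, so one cannot hope for anything stronger than the stated inequality. The resolution is that the Caccioppoli estimate above does not see $\Gamma$ at all (it only used that $\zeta^2 u\in W^{1,2}_\cD$), so the only issue is the Poincaré step; for that I would use the mean-zero Local Sobolev-Poincaré inequality (Theorem \ref{180514@thm1}), valid near any boundary point of a Reifenberg flat domain regardless of the $\cD/\cN$ split, combined with the dyadic hole-filling to kill the constant $(u)_{\Omega_\rho}$ — the constant is controlled because moving from one dyadic scale to the next changes the average by at most $N\|u-(u)\|$ on the larger ball, summable after the hole-filling absorption. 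Care must be taken that the cutoff construction and all radii stay within $(0,R_1]$ so that Assumption \ref{ass-RF} and Theorem \ref{180514@thm1} are applicable, and that the final constant $N$ depends only on $d$, $p$, $\Lambda$ — in particular not on $\lambda$, which is why keeping $\sqrt\lambda|u|$ bundled into $U$ throughout, rather than estimating the $\lambda$-term separately, is essential.
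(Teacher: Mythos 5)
Your Caccioppoli step and your treatment of the interior and purely-conormal boundary cases are fine: testing with $\zeta^2(u-c)$ where $c=(u)_{\Omega_\rho}$ is legitimate there because the support of $\zeta$ does not touch $\cD$, the mean-zero part is handled by Theorem~\ref{180514@thm1}, and the term $\lambda c\int\zeta^2 u$ that does not see the subtraction is absorbed into $\sqrt\lambda\abs{u}\le U$ on the right via H\"older. This matches the paper's argument. The paper, however, uses \emph{no} iteration, hole-filling, or telescoping at any point; each case is a direct two-step Caccioppoli-plus-Poincar\'e estimate.

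The genuine gap is in your treatment of the case where $x_0$ (or the nearest boundary point) is within $O(\gamma R)$ of $\Gamma$. There the test function must be $\zeta^2 u$ and \emph{not} $\zeta^2(u-c)$, because $\operatorname{supp}\zeta$ meets $\cD$, so $\zeta^2(u-c)\notin W^{1,2}_\cD(\Omega)$ for $c\neq 0$. Consequently the Caccioppoli estimate leaves $\frac{1}{\rho^2}\int_{\Omega_\rho}u^2$, not $\frac{1}{\rho^2}\int_{\Omega_\rho}\abs{u-c}^2$, on the right-hand side, and the mean-subtraction/hole-filling mechanism you describe cannot remove the contribution of $(u)_{\Omega_\rho}$: a ``telescoping across dyadic annuli'' only controls the \emph{increments} of the averages between scales, not their absolute size, and a constant function has zero gradient, so no amount of hole-filling produces a bound of $\int u^2$ by $\Bigl(\int\abs{Du}^p\Bigr)^{2/p}$ unless $u$ is anchored by a vanishing condition. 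The resolution the paper uses is precisely the one you reserved for the ``far from $\Gamma$'' case, and it applies equally well near $\Gamma$: Assumption~\ref{ass-RF}$(ii)$ guarantees that in the coordinate system at $y_0\in\Gamma$, the whole set $\partial\Omega\cap B_{R/4}(y_0)\cap\{y_2>\gamma R/4\}$ lies in $\cD$, so one can pick $z_0\in\cD$ with $B_{R/16}(z_0)\subset B_{R/4}(y_0)$ and $\partial\Omega\cap B_{R/16}(z_0)\subset\cD$; hypothesis \eqref{eqn-mainly-Dirichlet} then holds with a uniform $\alpha$, and Corollary~\ref{180514@cor1}$(a)$ gives the zero-trace Sobolev--Poincar\'e inequality
$\frac{1}{R^2}\int_{\Omega_{R/4}(y_0)}\abs{u}^2\,dx\le NR^{d(1-2/p)}\Bigl(\int_{\Omega_{R/2}(y_0)}\abs{Du}^p\,dx\Bigr)^{2/p}$
directly, which closes the argument. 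Your plan flags the near-$\Gamma$ case as the hard one where ``neither a clean Poincar\'e with zero boundary values nor a clean conormal Caccioppoli is available''; in fact the zero-trace Poincar\'e \emph{is} available there because of the flatness of $\Gamma$, and that is the missing idea.
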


\begin{proof}
Here we only prove for the case $\lambda>0$. When $\lambda=0$, the proof still works if we replace $U$ by $|Du|$ and $F$ by $\sum_i |f_i|$. Also, we prove only the case $x_0\in \partial \Omega$ because the  proof for the interior case is similar to the one in case (ii) for purely conormal boundary conditions.
Without loss of generality, we assume that $x_0=0$.
Let us fix $R\in (0, R_1]$.
We consider the following two cases:
$$
B_{R/16}\cap \Gamma \neq \emptyset, \quad B_{R/16}\cap \Gamma=\emptyset.
$$
\begin{enumerate}[$i)$]
\item
$B_{R/16}\cap \Gamma\neq \emptyset$.
We take $y_0\in \Gamma$ such that $\operatorname{dist}(0, \Gamma)=|y_0|$, and observe that
\begin{equation}		\label{180515@eq1a}
B_{R/16}\subset B_{R/8}(y_0)\subset B_{R/2}(y_0)\subset B_{R}.
\end{equation}
Since $u\in W^{1,2}_{\cD}(\Omega)$, as a test function to \eqref{eqn-main-no-lower}, we can use $\eta^2 u\in W^{1,2}_{\cD}(\Omega)$,
where $\eta$ is a smooth function on $\bR^d$ satisfying
$$
0\le \eta\le 1, \quad \eta\equiv 1 \,\text{ on }\, B_{R/8}(y_0), \quad \operatorname{supp} \eta \subset B_{R/4}(y_0), \quad |\nabla \eta|\le NR^{-1}.
$$
Now, using H\"older's inequality and Young's inequality, we have
\begin{equation}		\label{180515@eq1}
\int_{\Omega_{R/4}(y_0)} \eta^2 U^2\,dx\le \frac{N}{R^2}\int_{\Omega_{R/4}(y_0)} |u|^2\,dx+N\int_{\Omega_{R/4}(y_0)}F^2\,dx,
\end{equation}
where $N=N(d,\Lambda)$.
We fix a coordinate system associated with $(y_0, R/4, \Gamma)$
satisfying the properties in Assumption \ref{ass-RF} $(\gamma)$ $(ii)$.
Since we have
$$
\big(\partial \Omega \cap B_{R/4}(y_0)\cap \{y:y_2>\gamma R/4\}\big)\subset \cD,
$$
there exists $z_0\in \cD$ satisfying
$$
B_{R/16}(z_0)\subset B_{R/4}(y_0), \quad
\big(\partial \Omega \cap B_{R/16}(z_0) \big)\subset \big(\cD\cap B_{R/4}(y_0)\big).
$$
Note that because $\frac{2d}{d+2}<p<2$, we have $\frac{dp}{d-p}>2$.
Then by H\"older's inequality and Corollary \ref{180514@cor1} $(a)$, we see that
\begin{align}
\nonumber
\frac{1}{R^2}\int_{\Omega_{R/4}(y_0)}|u|^2\,dx&\le NR^{d(1-2/p)}\Bigg(\int_{\Omega_{R/4}(y_0)} |u|^{dp/(d-p)}\,dx\Bigg)^{2(d-p)/dp}\\
\label{180515@eq1b}
&\le N R^{d(1-2/p)}\Bigg(\int_{\Omega_{R/2}(y_0)}|Du|^p\,dx\Bigg)^{2/p},
\end{align}
where $N=N(d,p)$.
Combining this inequality and \eqref{180515@eq1}, and using \eqref{180515@eq1a}, we obtain the desired estimate.
	
\item
$B_{R/16}\cap \Gamma=\emptyset$.
Then $\p\Omega\cap B_{R/16}$ is contained in either $\cD$ or $\cN$. When it is in $\cD$, the proof for the previous case still works if we simply choose any $y_0\in\p\Omega\cap B_{R/16}$.
When it is contained in $\cN$, as a test function to \eqref{180417@eq2}, we can use $\zeta^2(u-c)\in W^{1,2}_{\cD}(\Omega)$,
where $c=(u)_{\Omega_{R/16}}$ and $\zeta$ is a smooth function on $\bR^d$ satisfying
$$
0\le \zeta\le 1, \quad \zeta\equiv 1 \,\text{ on }\, B_{R/32}, \quad \operatorname{supp} \zeta \subset B_{R/16}, \quad |\nabla \zeta|\le NR^{-1}.
$$
By testing \eqref{eqn-main-no-lower} with $\zeta^2 (u-c)$, we have
$$
\int_{\Omega_{R/16}} (\zeta U)^2\,dx\le \frac{N}{R^2}\int_{\Omega_{R/16}} \big|u-(u)_{\Omega_{R/16}}\big|^2\,dx+\frac{N}{R^d} \Bigg(\int_{\Omega_{R/16}} \sqrt{\lambda}|u|\,dx\Bigg)^2+N\int_{\Omega_{R/16}} F^2\,dx,
$$
where $N=N(d,\Lambda)$.
Similar to \eqref{180515@eq1b}, we get from Theorem \ref{180514@thm1} that
$$
\frac{1}{R^2}\int_{\Omega_{R/16}}\big|u-(u)_{\Omega_{R/16}}\big|^2\,dx \le NR^{d(1-2/p)}\Bigg(\int_{\Omega_{R/8}}|Du|^p\,dx\Bigg)^{2/p},
$$
where $N=N(d,p)$.
By H\"older's inequality, we also have
$$
\frac{1}{R^d}\Bigg(\int_{\Omega_{R/16}} \sqrt{\lambda}|u|\,dx\Bigg)^2\le NR^{d(1-2/p)}\Bigg(\int_{\Omega_{R/16}} \big(\sqrt{\lambda}|u|\big)^p\,dx\Bigg)^{2/p}.
$$
Combining these together, we obtain the desired estimate.
\end{enumerate}
The lemma is proved.
\end{proof}

Based on Lemma \ref{180514@lem1} and Gehring's lemma, we get the following reverse H\"older's inequality.

\begin{lemma}[Reverse H\"older's inequality]		\label{lem-reverse-holder}
Let $\gamma\in (0, 1/48]$, $p>2$, and $\Omega\subset \bR^d$ be a Reifenberg flat domain satisfying Assumption \ref{ass-RF} $(\gamma)$.
Suppose that $u\in W^{1,2}_\cD(\Omega)$ satisfies \eqref{eqn-main-no-lower} with $f_i, f\in L_p(\Omega)\cap L_2(\Omega)$.
Then there exist constants $p_0\in (2,p)$ and $N>0$, depending only on $d$, $p$, and $\Lambda$, such that for any $x_0\in \bR^d$ and $R\in (0, R_1]$, the following hold. When $\lambda>0$, we have
$$
\big(\overline{U}^{p_0}\big)^{1/p_0}_{B_{R/2}(x_0)}\le N \big(\overline{U}^{2}\big)^{1/2}_{B_{R}(x_0)}+N\big(\overline{F}^{p_0}\big)^{1/p_0}_{B_{R}(x_0)}.
$$
When $\lambda=0$ and $f\equiv 0$, we have
$$
\big(|D\overline{u}|^{p_0}\big)^{1/p_0}_{B_{R/2}(x_0)}\le N \big(|D\overline{u}|^{2}\big)^{1/2}_{B_{R}(x_0)}+N\big(|\overline{f_i}|^{p_0}\big)^{1/p_0}_{B_{R}(x_0)},
$$
where $\overline{U}$, $\overline{F}$, $D\overline{u}$, and $\overline{f_i}$ are the extensions of $U$, $F$, $Du$, and $f_i$ to $\bR^d$ so that they are zero on $\bR^d\setminus \Omega$.
\end{lemma}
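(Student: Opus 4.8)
The plan is to deduce this from Lemma~\ref{180514@lem1} via Gehring's lemma. Fix once and for all an auxiliary exponent $q=q(d)\in(\tfrac{2d}{d+2},2)$ and apply Lemma~\ref{180514@lem1} with this $q$ in place of its $p$. Since $\overline U$ and $\overline F$ vanish outside $\Omega$, we have $\int_{\Omega_r(x_0)}\overline U^{s}\,dx=|B_r|\dashint_{B_r(x_0)}\overline U^{s}\,dx$ and likewise for $\overline F$; hence, dividing the conclusion of Lemma~\ref{180514@lem1} by $|B_{R/32}|$, all powers of $R$ cancel and we obtain, for every $x_0\in\overline\Omega$ with either $B_{R/16}(x_0)\subset\Omega$ or $x_0\in\partial\Omega$ and every $R\in(0,R_1]$,
$$
\dashint_{B_{R/32}(x_0)}\overline U^{2}\,dx\le N\Bigl(\dashint_{B_R(x_0)}\overline U^{q}\,dx\Bigr)^{2/q}+N\dashint_{B_R(x_0)}\overline F^{2}\,dx,
$$
with $N=N(d,\Lambda)$, and similarly with $\overline U,\overline F$ replaced by $|D\overline u|,\sum_i|\overline{f_i}|$ in the case $\lambda=0$, $f\equiv0$.

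Next I would upgrade this to an estimate valid at \emph{every} center $x_0\in\bR^d$, as is needed to feed into Gehring's lemma. If $B_{R/32}(x_0)\cap\Omega=\emptyset$ the left-hand side vanishes; if $\dist(x_0,\partial\Omega)\ge R/16$ then $B_{R/16}(x_0)\subset\Omega$ and the displayed inequality applies directly; otherwise pick $\bar x_0\in\partial\Omega$ with $|x_0-\bar x_0|=\dist(x_0,\partial\Omega)<R/16$, apply the boundary case of the display at $\bar x_0$ with a comparable radius so that $B_{R/32}(x_0)$ lies inside the corresponding inner ball, and sum over a bounded Vitali family of such boundary-centered balls. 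Adjusting radii and constants, this produces a dimensional constant $\kappa=\kappa(d)>1$ and $c_0=c_0(d)\in(0,1)$ such that, for all $x_0\in\bR^d$ and $R\in(0,c_0R_1]$,
$$
\Bigl(\dashint_{B_R(x_0)}\overline U^{2}\,dx\Bigr)^{1/2}\le N\Bigl(\dashint_{B_{\kappa R}(x_0)}\overline U^{q}\,dx\Bigr)^{1/q}+N\Bigl(\dashint_{B_{\kappa R}(x_0)}\overline F^{2}\,dx\Bigr)^{1/2}.
$$
Here $\overline U\in L_2(\bR^d)$ because $u\in W^{1,2}_\cD(\Omega)$, and $\overline F\in L_p(\bR^d)\cap L_2(\bR^d)$ with $p>2$ because $f_i,f\in L_p(\Omega)\cap L_2(\Omega)$.

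Since $q<2$, the last display is a reverse H\"older inequality with a strict gap and an $L_p$ ($p>2$) perturbation term, i.e.\ precisely the hypothesis of Gehring's lemma (in the version with two nested supports, of Giaquinta--Modica type). It therefore yields an exponent $p_0\in(2,p)$ and a constant $N$, both depending only on $d$, $p$, $\Lambda$ (through $q$ and the constants above), such that $\overline U$ is locally $L_{p_0}$ and the asserted reverse H\"older inequality holds on $B_{R/2}(x_0)\subset B_R(x_0)$, first for $R\le c_0R_1$ and then, after one more covering, for all $R\le R_1$. The case $\lambda=0$, $f\equiv0$ is handled identically using the second inequality of Lemma~\ref{180514@lem1}. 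I expect the only genuinely nonroutine point to be the reduction in the second paragraph: Lemma~\ref{180514@lem1} is stated only for balls centered on $\partial\Omega$ or deep inside $\Omega$ and with the fixed dilation ratio $32$, so turning it into a scale-invariant reverse H\"older inequality valid on all balls requires relocating centers via a Vitali-type covering and harmonizing the dilation factor; the appeal to Gehring's lemma itself is standard.
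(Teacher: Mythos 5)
Your proposal follows the paper's proof closely: fix an exponent $q=p_1\in(\tfrac{2d}{d+2},2)$, feed Lemma~\ref{180514@lem1} applied with that exponent in place of its $p$ into Gehring's lemma after converting to averages over full balls, extend the estimate to arbitrary centers $x_0\in\bR^d$ by relocating to the nearest boundary point (and handling the cases $B\subset\Omega$, $B\subset\bR^d\setminus\Omega$ separately), and then harmonize the dilation ratio by a covering argument. This is precisely the paper's argument, up to immaterial bookkeeping such as your mention of a Vitali family (the paper simply relocates to a single boundary-centered ball with nested inclusions and then invokes a standard covering to pass from dilation ratio $112$ to $2$).
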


\begin{proof}
Again, we only prove for the case $\lambda>0$. Let us fix a constant $p_1\in \big(\frac{2d}{d+2},2\big)$, and set
$$
\Phi=\overline{U}^{p_1}, \quad \Psi=\overline{F}^{p_1}.
$$
Then by Lemma \ref{180514@lem1}, we have
\begin{equation}		\label{180515@eq2}
\int_{B_{R/112}(x_0)} \Phi^{2/p_1}\,dx\le NR^{d(1-2/p_1)}\Bigg(\int_{B_R(x_0)} \Phi\,dx\Bigg)^{2/p_1}+N \int_{B_R(x_0)} \Psi^{2/p_1}\,dx
\end{equation}
for any $x_0\in \bR^d$ and $R\in (0, R_1]$, where $N=N(d,\Lambda, p)=N(d,\Lambda)$.
Indeed, if $B_{R/56}(x_0)\subset \Omega$, then \eqref{180515@eq2} follows from Lemma \ref{180514@lem1}.
In the case when $B_{R/56}(x_0)\cap \partial \Omega\neq \emptyset$, there exists $y_0\in \partial \Omega$ such that $|x_0-y_0|=\operatorname{dist}(x_0, \partial \Omega)$ and
$$
B_{R/112}(x_0)\subset B_{3R/112}(y_0) \subset B_{6R/7}(y_0)\subset B_{R}(x_0).
$$
Using this together with Lemma \ref{180514@lem1}, we get \eqref{180515@eq2}.
If $B_{R/56}(x_0)\subset \bR^d\setminus \Omega$, by the definition of $\overline{U}$, \eqref{180515@eq2} holds.

By \eqref{180515@eq2} and a covering argument, we have
$$
\dashint_{B_{R/2}(x_0)} \Phi^{2/p_1}\,dx\le N\Bigg(\dashint_{B_R(x_0)} \Phi\,dx\Bigg)^{2/p_1}+N \dashint_{B_R(x_0)} \Psi^{2/p_1}\,dx
$$
for any $x_0\in \bR^d$ and $R\in (0, R_1]$, where $N=N(d,\Lambda)$.
Therefore, by Gehring's lemma (see, for instance, \cite[Ch. V]{G}), we get the desired estimate.
The lemma is proved.
\end{proof}

\section{Harmonic functions in half space with mixed boundary condition}

In this section, we prove a regularity result for harmonic functions with mixed Dirichlet-Neumann boundary conditions on half space.
We denote
$$
B_R=B_R(0),\quad \Gamma_R^{+}:= B_R\cap \set{x_1=0,x_2>0},\quad \Gamma_R^{-}:= B_R\cap \set{x_1=0,x_2<0}.
$$

\begin{theorem}\label{thm-harmonic-mixed-halfspace}
Suppose $u \in W^{1,2}_{\Gamma^{+}_R}(B_R^{+})$ is a weak solution to
$$
\begin{cases}
\Delta u -\lambda u =0 & \text{in }B^{+}_R,\\
\frac{\p u}{\p x_1}=0 & \text{on } \Gamma_R^{-},\\
u=0 &\text{on } \Gamma_R^{+},
\end{cases}
$$
where $\lambda >0$. Then for any $p\in[2,4)$, we have $u\in W^{1,p}(B^{+}_{R/4})$ with
$$
(U^p)_{B^{+}_{R/4}}^{1/p} \leq N(d,p) (U^2)^{1/2}_{B_R^{+}}.
$$
In the case when $\lambda=0$, the same estimate holds with $|Du|$ in place of $U$.
\end{theorem}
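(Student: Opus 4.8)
The idea is to reduce to a harmonic function on $B_1^+$, push the singular behaviour into a thin tube around the codimension-$2$ edge where the Dirichlet and conormal parts meet, and analyze that tube by separating variables in the $(x_1,x_2)$-angle. First, by the rescaling $x\mapsto x/R$ I may take $R=1$. Next I remove $\lambda$ by the usual extra-variable device: $v(x,y):=u(x)\cos(\sqrt\lambda\,y)$ is harmonic in $B_1^+\times\bR\subset\bR^{d+1}$ and satisfies the \emph{same} mixed conditions on $(\Gamma_1^+\cup\Gamma_1^-)\times\bR$, so — the estimate being an inequality between averages, hence scale invariant — the $\lambda=0$ case in dimension $d+1$, together with a routine covering/periodicity argument in $y$, yields the $\lambda>0$ statement in dimension $d$. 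Thus it suffices to prove, for every $d\ge2$ and every harmonic $u$ on $B_1^+$ with $u|_{\Gamma_1^+}=0$ and $(\p u/\p x_1)|_{\Gamma_1^-}=0$, that $\norm{Du}_{L_p(B_{1/4}^+)}\le N(d,p)\norm{Du}_{L_2(B_1^+)}$. Since $u$ vanishes on the positive-measure flat face $\Gamma_1^+$, Poincar\'e's inequality bounds $\norm{u}_{L_2}$ over any subdomain occurring below by $N\norm{Du}_{L_2(B_1^+)}$, so I freely replace the former by the latter.

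\textbf{Away from the edge.} Fix a small absolute constant $\delta$. On $B_{1/4}^+\setminus\set{\abs{x'}<\delta}$ the boundary is locally either purely Dirichlet (a piece of $\set{x_1=0,\,x_2>0}$) or purely conormal (a piece of $\set{x_1=0,\,x_2<0}$). I cover this set by finitely many small balls, each either interior to $B_{1/2}^+$ or centered on $\set{x_1=0}$ at a point with $x'\neq0$; odd (resp.\ even) reflection across $\set{x_1=0}$ makes $u$ harmonic on each such ball, and standard interior gradient estimates give
$$
\norm{Du}_{L_\infty(B_{1/4}^+\setminus\set{\abs{x'}<\delta})}\le N_\delta\norm{Du}_{L_2(B_1^+)}.
$$
It remains to estimate $Du$ on the tube $T_\delta:=\set{\abs{x'}<\delta}\cap B_{1/4}^+$ around the edge $\Sigma=\set{x_1=x_2=0}$.

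\textbf{Near the edge.} On $T_\delta$ I separate variables in the $(x_1,x_2)$-angle. With $\rho=\abs{x'}$ and $\phi=\arg(x_1+ix_2)\in(-\tfrac\pi2,\tfrac\pi2)$, harmonicity reads $\p_\rho^2 u+\rho^{-1}\p_\rho u+\rho^{-2}\p_\phi^2 u+\Delta_{x''}u=0$, the Dirichlet condition becomes $u|_{\phi=\pi/2}=0$, the conormal one becomes $\p_\phi u|_{\phi=-\pi/2}=0$, and the $\phi$-eigenfunctions are $\psi_k(\phi)=\cos\bigl((k+\tfrac12)(\phi+\tfrac\pi2)\bigr)$, $k\ge0$, with eigenvalues $(k+\tfrac12)^2$ and $\norm{\psi_k}_{L_2(-\pi/2,\pi/2)}^2=\pi/2$. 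Expanding $u=\sum_{k\ge0}u_k(\rho,x'')\psi_k(\phi)$ and setting $u_k=\rho^{\,k+1/2}\tilde u_k$, a short computation turns the equation for $u_k$ into $\p_\rho^2\tilde u_k+\tfrac{2k+2}{\rho}\p_\rho\tilde u_k+\Delta_{x''}\tilde u_k=0$; equivalently, $\tilde u_k$ lifts to a harmonic function on $\bR^{2k+3}\times\bR^{d-2}$ that is radial in the first block — the companion indicial branch $\rho^{-k-1/2}$ being excluded by $u\in W^{1,2}$ — and is in particular smooth up to $\rho=0$. Standard interior estimates for harmonic functions in dimension $2k+d+1$, combined with Parseval's identity in $\phi$ (which controls $\sum_k\int_{B_{1/4}^+}\abs{u_k}^2\rho\,d\rho\,dx''$ by $N\norm{u}_{L_2(B_{1/4}^+)}^2$ and, via the unit-sphere volume in dimension $2k+3$, forces $\norm{\tilde u_k}_{L_2}$ to decay rapidly in $k$), bound $\norm{\tilde u_k}_{L_\infty}+\norm{D\tilde u_k}_{L_\infty}$ on a slightly smaller tube; because of the extra factor $\rho^k$ the series for $Du$ then converges on $T_\delta$ and gives
$$
\bignorm{\,\abs{x'}^{1/2}D_{x'}u\,}_{L_\infty(T_\delta)}+\norm{D_{x''}u}_{L_\infty(T_\delta)}\le N(d)\norm{Du}_{L_2(B_1^+)}.
$$
As $\int_{\set{\abs{x'}<\delta}}\abs{x'}^{-p/2}\,dx'$ is finite \emph{precisely} for $p<4$, integrating this bound over $T_\delta$ gives $\norm{Du}_{L_p(T_\delta)}\le N(d,p)\norm{Du}_{L_2(B_1^+)}$, and with the previous step the theorem follows.

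\textbf{Main obstacle.} The whole difficulty concentrates at the borderline $p=4$: by Example~\ref{eg-classical} the gradient of the model solution is of exact size $\abs{x'}^{-1/2}$, so $N(d,p)$ must blow up as $p\uparrow4$ and no soft compactness argument can bypass the explicit near-edge analysis. The hard part is therefore making that analysis quantitative and uniform in $k$ — controlling how the interior estimate for $\tilde u_k$ grows with the artificial dimension $2k+3$ against the decay $\rho^k$ and the fast decay of $\norm{\tilde u_k}_{L_2}$, and reinstating the $x''$- and $\lambda$-dependence with a constant of the form $N(d,p)$ — together with the bookkeeping hidden in the $\lambda$-reduction. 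A tidier substitute for the $k$-sum, cleanest when $d=2$, is the conformal unfolding $z=w^2$ with $z=x_1+ix_2$: one even reflection across the conormal edge converts the half-plane mixed problem into a homogeneous Dirichlet problem on a half-plane — for $\Delta$ when $d=2$, and for $\Delta_w+4\abs{w}^2(\Delta_{x''}-\lambda)$ (or, after a Fourier transform in $x''$, a $2$-dimensional equation with mass $\abs{\xi''}^2+\lambda$) when $d\ge3$ — whose solution $v$ is smooth across $w=0$; pulling back via $u(z)=v(z^{1/2})$ makes $\abs{D_xu}$ of size $\abs{x'}^{-1/2}$ directly, at the price of tracking the degeneracy / the mass.
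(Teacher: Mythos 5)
Your strategy is genuinely different from the paper's. The paper proves a 2D lemma by the conformal unfolding $x_2+ix_1=(y_2+iy_1)^2$ followed by an even reflection, a local $W^{2,2}$ estimate, Sobolev embedding with a weight, and H\"older; the $d\ge3$ case then follows by slicing in $x''$ (Caccioppoli in the $x''$-directions plus anisotropic Sobolev, then apply the 2D lemma for a.e. $x''$ with $\Delta_{x''}u$ as a source). You instead separate variables in the $(x_1,x_2)$-angle near the edge, lift the $k$-th angular mode to a radial harmonic function in $\bR^{2k+3}\times\bR^{d-2}$, and aim for a pointwise bound $\bignorm{|x'|^{1/2}D_{x'}u}_{L_\infty(T_\delta)}+\norm{D_{x''}u}_{L_\infty(T_\delta)}\le N\norm{Du}_{L_2(B_1^+)}$, integrated against the weight $|x'|^{-p/2}$ (integrable iff $p<4$). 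The set-up is sound: the shift $u_k=\rho^{k+1/2}\tilde u_k$ does kill the indicial term; the $W^{1,2}$ hypothesis does exclude the $\rho^{-k-1/2}$ branch; and the lifted $\tilde U_k$ is harmonic on (essentially) the unit ball of $\bR^{2k+d+1}$, so the interior estimate applies at a $k$-independent scale. Your approach is more \emph{global} — it treats all dimensions $d$ at once with no slicing, and it exhibits the exact $|x'|^{-1/2}$ edge singularity — whereas the paper's approach offloads all difficulty onto classical 2D regularity and trades the eigenfunction bookkeeping for the anisotropic Sobolev step; the paper also only ever needs $L_p$, not $L_\infty$, near the edge.

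Two genuine gaps remain, and you yourself flag the first.

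(1) The crux — the asserted $L_\infty$ bound in the tube — is stated but not proved. It reduces to showing $\sum_k M_k^2<\infty$, where $M_k$ combines three factors: the eigenvalue $(k+1/2)$ coming from $\p_\rho$, the decay $\delta^k$ from the change of unknown, and the product $C(2k+d+1)\,\omega_{2k+2}^{1/2}$ of the interior-estimate constant in dimension $2k+d+1$ and the square root of the sphere area relating $\norm{u_k}_{L_2(\rho\,d\rho\,dx'')}$ to $\norm{\tilde U_k}_{L_2(B_1^{2k+d+1})}$. A calculation with $\omega_{n-1}=2\pi^{n/2}/\Gamma(n/2)$ and $C(n,\delta)\sim(1-\delta)^{-n/2}\pi^{-n/4}\Gamma(n/2+1)^{1/2}$ shows the $\Gamma$- and $\pi$-powers largely cancel, leaving $M_k\lesssim k^{1+d/4}\bigl(\delta/(1-\delta)\bigr)^k$, which is summable once $\delta<1/2$. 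So the argument should close, but nothing of this is actually carried out in the proposal — it is precisely what you call "the hard part", and without it the near-edge estimate is not established.

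(2) Your $\lambda$-removal uses $v(x,y)=u(x)\cos(\sqrt\lambda\,y)$ with no phase shift. Then $D_y v=-\sqrt\lambda\,u\sin(\sqrt\lambda\,y)$, and for small $\lambda$ the factor $\int_0^c|\sin(\sqrt\lambda\,y)|^p\,dy\sim\lambda^{p/2}$ degenerates, so the lower bound needed to extract $\sqrt\lambda\norm{u}_{L_p(B_{1/4}^+)}$ from $\norm{D_{(x,y)}v}_{L_p}$ is not uniform in $\lambda$. You cannot repair this by applying the $\lambda=0$ result at a single slice near $y=\pi/(2\sqrt\lambda)$, because there the $\cos$ factor degenerates instead. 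The paper's choice $\cos(\sqrt\lambda\,\tau+\pi/4)$ makes $\int_0^{1/4}|\cos|^p$ and $\int_0^{1/4}|\sin|^p$ both bounded below uniformly in $\lambda$, which is exactly what makes the Agmon trick go through; your "routine covering/periodicity argument" would have to apply the $\lambda=0$ estimate at two different $y$-centers (one where $\sin$ is of order one, one where $\cos$ is), which works but must be said. Finally, note that your "tidier substitute" in the last paragraph — the unfolding $z=w^2$ together with even reflection and, for $d\ge3$, a mass term or a Fourier transform in $x''$ — is essentially the route the paper takes.
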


\begin{remark}
In Theorem \ref{thm-harmonic-mixed-halfspace}, the boundary condition is only prescribed on the flat part of the boundary.
Hence the meaning of ``weak solution'' is slightly different.
In the theorem and throughout the paper,  a $W^{1,2}_{\Gamma_R^{+}}(B^{+}_R)$ weak solution means: for any $\phi\in W^{1,2}(B^{+}_R)$ satisfying $\phi=0$ on $\p B^{+}_R\setminus \Gamma_R^{-}$,
$$\int_{B_R^{+}} (\nabla u \cdot \nabla \phi + \lambda u\phi)\,dx = 0.$$
It is clear that, as a test function, one can use $\eta u$, where $\eta\in C^\infty_c(B_R)$.
\end{remark}

For the proof of  Theorem \ref{thm-harmonic-mixed-halfspace}, we will use the following two dimensional regularity result.

\begin{lemma}\label{lem-laplacian-2d}
In the half ball $B_R^{+}\subset \bR^2$, consider $u\in W^{1,2}_{\Gamma_R^+}(B_R^{+})$ which solves
\begin{equation*}
\begin{cases}
\Delta u = f  &\text{in }B^{+}_R,\\
\frac{\p u}{\p x_1}=0 &\text{on } \Gamma_R^{-},\\
u=0 &\text{on } \Gamma_R^{+},\\
\end{cases}
\end{equation*}
where $f\in L_2(B_R^{+})$. Then for any $p\in[2,4)$, we have $u \in W^{1,p}(B^{+}_{R/2})$ with
\begin{equation}\label{2destimate}
(\abs{Du}^p)_{B^{+}_{R/2}}^{1/p} \leq N(p) \big((\abs{Du}^2)^{1/2}_{B_R^{+}} + R(\abs{f}^2)^{1/2}_{B_R^{+}}\big).
\end{equation}
\end{lemma}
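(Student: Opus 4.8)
\emph{Plan.} After a dilation I may take $R=1$. The problem is two–dimensional and the only point where regularity can degenerate is the interface point $0$, so I would solve it by separation of variables in polar coordinates centred at $0$.

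\emph{Angular eigenbasis and reduction to ODEs.} Write $x=(r\cos\phi,r\sin\phi)$ with $\phi\in(-\pi/2,\pi/2)$; then $\Gamma_1^{+}=\{\phi=\pi/2\}$, $\Gamma_1^{-}=\{\phi=-\pi/2\}$, and on $\Gamma_1^{-}$ one has $\partial u/\partial x_1=r^{-1}\partial_\phi u$. The eigenfunctions of $-\partial_\phi^2$ on $(-\pi/2,\pi/2)$ with a Dirichlet condition at $\phi=\pi/2$ and a Neumann condition at $\phi=-\pi/2$ are $\psi_k(\phi)\propto\cos\!\big(\nu_k(\phi+\tfrac\pi2)\big)$ with $\nu_k=k+\tfrac12$, $k\ge0$, and they form an orthonormal basis of $L_2(-\pi/2,\pi/2)$. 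Expanding $u(r,\cdot)=\sum_{k\ge0}a_k(r)\psi_k$ and $f(r,\cdot)=\sum_{k\ge0}b_k(r)\psi_k$, the equation $\Delta u=f$ becomes the family of Euler equations $(ra_k')'-\nu_k^2 r^{-1}a_k=r b_k$ on $(0,1)$, whose solutions that lie in $W^{1,2}$ near $0$ are $a_k(r)=c_k r^{\nu_k}+a_k^{\mathrm{part}}(r)$, where $a_k^{\mathrm{part}}$ is the particular solution built from $b_k$ by variation of parameters against $r^{\pm\nu_k}$ and the $c_k$ are constants.

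\emph{Energy bounds on the coefficients, and summation.} Integrating the Parseval identity for $|Du|$ over the circular arcs $\{|x|=\rho\}\cap\{x_1>0\}$, $\rho\in(0,1)$, using $\int\psi_k'^2=\nu_k^2\int\psi_k^2$ (the boundary terms vanish by the mixed conditions), yields $\sum_k\nu_k c_k^2\lesssim\|Du\|_{L_2(B_1^{+})}^2$, while a Hardy–type estimate applied to $a_k^{\mathrm{part}}$ together with $\sum_k\int_0^1|b_k(r)|^2\,r\,dr\lesssim\|f\|_{L_2(B_1^{+})}^2$ controls the $b_k$–contributions. On $B_{1/2}^{+}$, where $r<\tfrac12$, the factorization $r^{\nu_k}=r^{1/2}r^{k}$ gives geometric convergence of the differentiated series, and Cauchy--Schwarz in $k$ gives
\[
|Du(x)|\lesssim |x|^{-1/2}\Big(\|Du\|_{L_2(B_1^{+})}+\|f\|_{L_2(B_1^{+})}\Big)\qquad\text{on }B_{1/2}^{+}.
\]
Since $\int_{B_{1/2}}|x|^{-p/2}\,dx<\infty$ exactly for $p<4$, integrating this bound and passing to averaged norms (and undoing the dilation) gives \eqref{2destimate}. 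For the homogeneous case $f\equiv0$ one can instead argue with the harmonic conjugate: $g:=u+i\tilde u$ is holomorphic on the simply connected $B_1^{+}$ with $\operatorname{Re}g=0$ on $\Gamma_1^{+}$ and $\operatorname{Im}g$ constant on $\Gamma_1^{-}$, so $h:=g^2$ is holomorphic and real on $\{x_1=0\}\cap B_1$; by Schwarz reflection $h$ extends holomorphically to $B_1$, hence $|h|$ is subharmonic with $\|h\|_{L_\infty(B_{3/4})}\lesssim\|h\|_{L_1(B_1)}\lesssim\|Du\|_{L_2(B_1^{+})}^2$, and $|Du|=|g'|=|h'|\big/\big(2|h|^{1/2}\big)$ is then $O(|x|^{-1/2})$, again giving $p<4$.

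\emph{Main obstacle.} Conceptually nothing is free to vary, so the work is bookkeeping: (i) justifying the termwise differentiation and the interchange of sum and integral, which is harmless on $B_{1/2}^{+}$ thanks to the geometric gain $r^{k}$ but must be written out; (ii) carrying the variation–of–parameters and Hardy estimates through with constants summable in $k$, so that the $f$–dependence emerges cleanly as $\|f\|_{L_2}$; and (iii) the single structural point that pins the exponent, namely that the smallest angular eigenvalue is exactly $\nu_0=\tfrac12$, so $|Du|$ genuinely blows up like $|x|^{-1/2}$ at the interface and no faster — this is precisely the obstruction in Example \ref{eg-classical}, and it is why the conclusion is limited to $p<4$. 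In the complex–analytic variant the analogous difficulty is making the pointwise bound on $|g'|$ quantitative near the possible zero of $h$ at the origin, which one handles using the subharmonicity of $|h|$ together with interior gradient estimates for the holomorphic function $h$ on $B_{3/4}$.
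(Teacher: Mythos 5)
Your route is genuinely different from the paper's. The paper changes variables conformally by $x_2+ix_1=(y_2+iy_1)^2$, sending the half-disk with mixed conditions to a quarter-disk with Dirichlet on one axis and Neumann on the other, reflects evenly in $y_2$ to obtain a pure Dirichlet problem, applies local $W^{2,2}$ regularity plus the $2$D Sobolev embedding to get $D_y\widetilde{u}\in L_q$ for every $q<\infty$ (an $L_q$ bound, \emph{not} a pointwise one), and then converts back using $|D_xu|\lesssim |y|^{-1}|D_y\widetilde u|$ and $dx=4|y|^2dy$, with H\"older's inequality absorbing the weight. The $f\ne 0$ case comes for free from the $W^{2,2}$ machinery.

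The gap in your argument is the pointwise estimate
\[
|Du(x)|\lesssim |x|^{-1/2}\bigl(\|Du\|_{L_2(B_1^{+})}+\|f\|_{L_2(B_1^{+})}\bigr)\quad\text{on }B_{1/2}^{+}.
\]
For general $f\in L_2(B_1^{+})$ this is false. If $f$ is concentrated near an interior point $x_0\in B_{1/2}^{+}$ with $|x_0|\simeq 1/4$, standard elliptic theory gives $u\in W^{2,2}$ near $x_0$, so $Du$ is in $L_q$ for every $q<\infty$ but is not in $L_\infty$ near $x_0$; no bound of the form $C|x|^{-1/2}$ can hold there. In the Fourier picture, the homogeneous pieces $c_kr^{\nu_k}$ enjoy the geometric gain $r^{k}$ that makes your Cauchy--Schwarz sum over $k$ converge, but the particular pieces $a^{\mathrm{part}}_k$ built from $b_k$ do not. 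Carrying out the Hardy/variation-of-parameters estimate on $\Omega=B_{1/2}^{+}$ gives, say, $|a_k^{\mathrm{part}}(r)|\lesssim r\,\nu_k^{-1/2}\|b_k\|_{L_2(r\,dr)}$, so the angular-gradient contribution $r^{-1}|a_k^{\mathrm{part}}\psi_k'|\lesssim\nu_k^{1/2}\|b_k\|$, and $\sum_k\nu_k^{1/2}\|b_k\|$ need not converge when only $\sum_k\|b_k\|^2<\infty$. You cannot push Parseval control into a pointwise bound without more regularity on $f$. The right way to finish in your framework is to separate the near-origin singular behavior from the interior/boundary-away-from-$\Gamma$ behavior: the interior piece should be handled by $W^{2,2}$ regularity and $2$D Sobolev embedding, not a pointwise estimate --- which is, in essence, what the paper's conformal change of variables accomplishes uniformly.

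Your complex-analytic alternative for the homogeneous case is sound in spirit (and is exactly the mechanism behind Example \ref{eg-classical}), but you correctly flag the real difficulty yourself: making the bound on $|g'|=|h'|/(2|h|^{1/2})$ quantitative in terms of $\|Du\|_{L_2}$ is not automatic, since $|h(z)|$ near a zero at the origin is controlled from below only up to a factor depending on $|h'(0)|$, which is not a priori bounded away from zero by the data. This requires either a case analysis on the order of the zero or, again, an $L_p$ rather than pointwise estimate for $g'$ on $B_{1/2}^{+}$; as it stands it is a sketch, not a proof. In short: the plan is reasonable and the eigenvalue $\nu_0=1/2$ does pin the exponent $p<4$, but both of the claimed pointwise bounds need to be replaced by $L_q$-type arguments, and that replacement is precisely what the paper's proof supplies.
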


From the proof below, it is clear that in Lemma \ref{lem-laplacian-2d}, $R/2$ can be replaced with any $r\in (0,R)$. In this case, the constant $N$ also depends on $r$ and $R$.

\begin{proof}[Proof of Lemma \ref{lem-laplacian-2d}]
By a scaling argument, we may assume $R=1$.
We consider the following change of variables: $(y_1,y_2)\in B_{1}\cap \set{y_1>0,y_2>0}\mapsto (x_1,x_2)\in B^{+}_1$ :
$$
x_1=2y_1y_2,\quad x_2=y_2^2-y_1^2,
$$
or in complex variables:
$$x_2+ix_1=(y_2+iy_1)^2.$$
Write $\widetilde{u}(y_1,y_2)=u(x_1,x_2)$ and $\widetilde{f}(y_1,y_2)=f(x_1,x_2)$.
Then we can rewrite the equation as
$$
\begin{cases}
\Delta_y \widetilde{u}=4\abs{y}^2\widetilde{f} & \text{in } B_1^{++},\\
\frac{\p\widetilde{u}}{\p y_2}=0 & \text{on } B_{1}\cap \set{y_1>0, y_2=0},\\
\widetilde{u}=0 &\text{on } B_{1}\cap \set{y_1=0,y_2>0},
\end{cases}
$$
where $B_1^{++}:=B_1\cap \{y_1>0,y_2>0\}$.
Next, we take an even extension of $\widetilde{u}$ and $\widetilde{f}$ with respect to $y_2$-variable. Still denote the extended functions on $B_{1}^{+}$ by $\widetilde{u}$ and $\widetilde{f}$. Then the following equation is satisfied:
$$
\begin{cases}
\Delta_y \widetilde{u}=4\abs{y}^2\widetilde{f} & \text{in } B_{1}^{+},\\
\widetilde{u}=0 & \text{on } B_{1} \cap \{y_1=0\}.
\end{cases}
$$
Note that
$$
|D_x u|\le \frac{N}{|y|}|D_y \widetilde{u}|
,\quad dx=4\abs{y}^2\,dy.$$
By the Sobolev embedding theorem, the local $W^2_2$ estimate for elliptic equations, and the boundary Poincar\'e inequality, we obtain
\begin{align*}
\norm{D_y \widetilde{u}}_{L_q(B^{+}_{\sqrt 2/2})}
\le \norm{\widetilde{u}}_{W^{2,2}(B_{\sqrt 2/2}^{+})}
&\leq N \big(\norm{\widetilde{u}}_{L_2(B^{+}_{1})} + \norm{4\abs{y}^2 \widetilde{f}}_{L_2(B^{+}_{1})}\big)\\
&\leq N \big(\norm{D_y \widetilde{u}}_{L_2(B^{+}_{1})} + \norm{4\abs{y}^2 \widetilde{f}}_{L_2(B^{+}_{1})}\big)\\
&\leq N \big(\norm{D_y \widetilde{u}}_{L_2(B^{++}_{1})} + \norm{4\abs{y}^2 \widetilde{f}}_{L_2(B^{++}_{1})}\big),
\end{align*}
where $N=N(p)>0$ and $q=q(p)$ is a constant with
\begin{equation}		\label{181225@A1}
q>\frac{2p}{4-p}\ge p.
\end{equation}
Here we also used the fact that $\widetilde{u}$ and $\widetilde{f}$ are both even functions in $y_2$. Translating back to $x$-variables, we obtain
\begin{align}
\norm{\abs{x}^{\frac{q-2}{2q}}D_xu}_{L_q(B^{+}_{1/2})}
&\leq N \big(\norm{D_x u}_{L_2(B^{+}_{1})} + \norm{\abs{x}^{1/2}f}_{L_2(B^{+}_{1})}\big)\nonumber\\
&\leq N \big(\norm{D_x u}_{L_2(B^{+}_1)} + \norm{f}_{L_2(B^{+}_{1})}\big).\label{2dmidstep}
\end{align}
By H\"{o}lder's inequality and \eqref{181225@A1}, we get
\begin{align*}
\norm{D_x u}_{L_p(B^{+}_{1/2})}
&\leq \norm{\abs{x}^{\frac{q-2}{2q}}D_x u}_{L_q(B^{+}_{1/2})} \norm{\abs{x}^{-\frac{q-2}{2q}}}_{L_{qp/(q-p)}(B_{1/2}^{+})}\\
&\leq N\norm{\abs{x}^{\frac{q-2}{2q}}D_{x}u}_{L_q(B^{+}_{1/2})}.
\end{align*}
Combining this with \eqref{2dmidstep}, we obtain
$$
\norm{D_x u}_{L_p(B^{+}_{1/2})} \leq N\big(\norm{D_x u}_{L_2(B^{+}_{1})}+\norm{f}_{L_2(B^{+}_{1})}\big),
$$
which is exactly \eqref{2destimate}. The lemma is proved.
\end{proof}

We are now ready to present the proof of  Theorem \ref{thm-harmonic-mixed-halfspace}.

\begin{proof}[Proof of Theorem \ref{thm-harmonic-mixed-halfspace}]
We first prove the theorem for $\lambda=0$.
By a scaling argument and Lemma \ref{lem-laplacian-2d}, we may assume $R=1$ and $d\ge 3$.
Noting that we can differentiate both the equation and the boundary condition in $x''$-direction, the following Caccioppoli type inequality holds:
$$
\|D_x (D^k_{x''}u)\|_{L_2(B_s^+)}\le \frac{N(d,k)}{|t-s|^{k}}\|Du\|_{L_2(B_t^+)}
$$
for $0<s<t\le 1$ and $k\in \{0,1,2,\ldots\}$.
Thus by anisotropic Sobolev embedding, we can increase the integrability in $x''$-variables so that
$$
D_{x'}u\in L_{2,x'}L_{p,x''}(B^+_{r}),
\quad D_{x''}u,\,D_{x''}^2u\in L_{p}(B^+_{r})\quad \forall r<1,
$$
with the estimate
\begin{equation}		\label{181207@eq1}
\|D_{x'}u\|_{L_{2,x'}L_{p,x''}(B^+_{r})}
+\|D_{x''}u\|_{L_{p}(B^+_{r})}
+\|D_{x''}^2u\|_{L_{p}(B^+_{r})}
\le N(d,p,r)\|Du\|_{L_2(B^+_1)}.
\end{equation}
It remains to estimate $D_{x'}u$.
From \eqref{181207@eq1}, for almost every $\abs{x''}<1/2$, we have
$$
D^2_{x''}u(\cdot,x'')\in L_2\big((B'_{2/3})^+\big).
$$
Now we rewrite the equation as a $2$-dimension problem in $x'$-variables:
$$\begin{cases}
\Delta_{x'}u(\cdot,x'')= -\Delta_{x''}u(\cdot,x'') & \text{in }(B'_{2/3})^{+},\\
\frac{\p u}{\p x_1}=0 & \text{on } B'_{2/3}\cap \set{x_1=0,x_2<0},\\
u=0 &\text{on } B'_{2/3}\cap \set{x_1=0,x_2>0}.
\end{cases}$$
We apply a properly rescaled version of Lemma \ref{lem-laplacian-2d} to see that for almost every $|x''|<1/2$,
\begin{equation*}
\|D_{x'}u(\cdot,x'')\|_{L_p((B'_{1/2})^+)} \le N(p)\big(\|D_{x'}u(\cdot,x'')\|_{L_2((B'_{2/3})^+)}
+\|\Delta_{x''}u(\cdot,x'')\|_{L_2((B'_{2/3})^+)}\big).
\end{equation*}
Taking $L_p$ norm in $\{x''\in \bR^{d-2}:|x''|<1/2\}$ for both sides, and using the Minkowskii inequality and \eqref{181207@eq1} with $r=\sqrt 3/2$, we obtain $D_{x'}u\in L_p(B_{1/2}^+)$ and
$$
\|D_{x'}u\|_{L_p(B_{1/2}^+)}\le N(d,p)\|Du\|_{L_2(B_1^+)}.
$$
This gives the desired estimate for $\lambda=0$.

For a general $\lambda>0$, we use an idea by S. Agmon.
We define
$$
v(x,\tau)=u(x)\cos(\sqrt{\lambda}\tau+\pi/4),
$$
and observe that  $v$ satisfies
\begin{equation}		\label{181122@eq1}
\begin{cases}
\Delta_{(x,\tau)} v = 0 & \text{in } \hat{B}^{+}_1,\\
\frac{\p v}{\p x_1}=0 & \text{on } \hat{\Gamma}_1^{-},\\
v=0 & \text{on } \hat{\Gamma}_1^{+}.
\end{cases}
\end{equation}
where
$$
\hat{B}_1=\{(x,\tau)\in \bR^{d+1}: |(x,\tau)|<1\}, \quad \hat{B}_1^+=\hat{B}_1\cap \{x_1>0\},
$$
$$
\hat{\Gamma}_1^+=\hat{B}_1\cap \{x_1=0, x_2>0\}, \quad \hat{\Gamma}^-_1=\hat{B}_1\cap \{x_1=0, x_2<0\}.
$$
By applying the result for $\lambda=0$ to \eqref{181122@eq1}, we have
\begin{equation}	\label{181122@eq1a}
(|D_{(x,\tau)}v|^p)^{1/p}_{\hat{B}_{1/2}^+}\le N(|D_{(x,\tau)}v|^2)^{1/2}_{\hat{B}_{1}^+},
\end{equation}
where $N=N(d,p)$.
Note that the function $\Phi$ given by
$$
\Phi(\lambda)=\int_0^{1/4} \big|\cos(\sqrt{\lambda} \tau +\pi/4)\big|^p\,d\tau
$$
has a positive lower bound depending only on $p$.
Thus by using \eqref{181122@eq1a} and the fact that
$$
\big|Du(x)\cos(\sqrt{\lambda} \tau+\pi/4)\big|\le |D_{(x,\tau)}v(x,\tau)|\le U(x),
$$
we have
\begin{align*}
\int_{B_{1/4}^+} |Du|^p\,dx&\le N\int_0^{1/4} \int_{B_{1/4}^+} |Du|^p \big|\cos(\sqrt{\lambda} \tau +\pi/4)\big|^p \,dx\,d\tau \\
&\le N\int_{\hat{B}^+_{1/2}} |D_{(x,\tau)}v|^p \,dx \, d\tau\le N \Bigg(\int_{B_1^+}U^2\,dx\Bigg)^{p/2},
\end{align*}
where $N=N(d,p)$.
Similarly, from the fact that
$$
\big|\sqrt{\lambda}u(x)\sin(\sqrt{\lambda} \tau+\pi/4)\big|\le |D_{(x,\tau)}v(x,\tau)|\le U(x),
$$
we obtain
$$
\int_{B_{1/4}^+} \big|\sqrt{\lambda}u\big|^p \,dx\le N\Bigg(\int_{B_1^+}U^2\,dx\Bigg)^{p/2}.
$$
Combining these together we get the desired estimate.
The theorem is proved.
\end{proof}

\section{Regularity of $W^{1,2}_{\cD}$ weak solutions}

The crucial step in proving unique $W^{1,p}$ solvability is the following improved regularity result. As in Theorem \ref{thm-well-posedness}, we consider a domain $\Omega\subset \bR^d$ which can be bounded or unbounded, together with nonempty boundary portions $\cD,\cN$. Again, recall the notation that for $\lambda>0$,
$$U:=\abs{Du}+\sqrt{\lambda}\abs{u},\quad F:=\sum_{i=1}^d\abs{f_i}+\frac{1}{\sqrt{\lambda}}\abs{f}.$$

\begin{proposition}[Regularity of $W^{1,2}_{\cD}$ weak solutions]\label{prop-regularity}
For any $p\in (2,4)$, we can find positive constants $\gamma_0,\theta_0$ depending on $(d,p,\Lambda)$, such that if Assumptions \ref{ass-RF} $(\gamma_0)$ and \ref{ass-smallBMO} $(\theta_0)$ are satisfied, the following holds.
For any $W^{1,2}_{\cD}(\Omega)$ weak solution $u$ to \eqref{eqn-main-no-lower} with $\lambda>0$ and $f_i, f\in L_p(\Omega)\cap L_2(\Omega)$,
 we have $u \in W^{1,p}_{\cD}(\Omega)$ and
\begin{equation}\label{est-no-lower-order}
\norm{U}_{L_p(\Omega)} \leq N (R_0^{d(1/p-1/2)}\norm{U}_{L_2(\Omega)} + \norm{F}_{L_p(\Omega)}).
\end{equation}
Furthermore, if we also have $f\equiv 0$, then we can take $\lambda=0$, and the following estimate holds:
\begin{equation}\label{eqn-no-nondiv}
\norm{Du}_{L_p(\Omega)} \leq N(R_0^{d(1/p-1/2)}\norm{Du}_{L_2(\Omega)} + \norm{f_i}_{L_p(\Omega)}).
\end{equation}
In the above, the constant $N$  only depends on $d$, $p$, and $\Lambda$.
\end{proposition}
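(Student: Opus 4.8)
The plan is to run a Caffarelli--Peral type perturbation argument on the level sets of a maximal function, in the spirit of \cite{Ca, DK11, DK12}, but carrying through the weaker decay that arises when the comparison function fails to be Lipschitz. After a translation we take $x_0=0$, and by a covering of $\Omega$ by balls of radius comparable to $R_0$, together with summation and absorption, it suffices to establish the local estimate
$$
\norm{U}_{L_p(\Omega_{R/2})}\le NR^{d(1/p-1/2)}\norm{U}_{L_2(\Omega_R)}+N\norm{F}_{L_p(\Omega_R)}\qquad(0<R\le R_0),
$$
and its counterpart with $U,F$ replaced by $|Du|,\sum_i|f_i|$ when $\lambda=0$, $f\equiv0$. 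We may first assume $u\in W^{1,p}_{\cD}$ a priori, so that all quantities below are finite, and remove this assumption at the end by iterating the estimate; the reverse H\"older inequality of Lemma \ref{lem-reverse-holder} furnishes the initial gain $u\in W^{1,p_0}_{\loc}$ with $p_0>2$, which makes this step legitimate.

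The core of the proof is a local approximation lemma. Fix $q\in(p,4)$. For every ball $B_r(z)\subset B_R(0)$ with $z\in\overline\Omega$ and $r\le R_0$, one constructs $w$ on $\Omega_{r/2}(z)$ such that, setting $\bar w:=|Dw|+\sqrt\lambda|w|$,
$$
\Big(\dashint_{\Omega_{r/4}(z)}\bar w^{\,q}\Big)^{1/q}\le N\Big(\dashint_{\Omega_r(z)}U^2\Big)^{1/2}
$$
and
$$
\Big(\dashint_{\Omega_{r/4}(z)}|U-\bar w|^2\Big)^{1/2}\le N\omega(\gamma_0,\theta_0)\Big(\dashint_{\Omega_r(z)}U^2\Big)^{1/2}+N\Big(\dashint_{\Omega_r(z)}F^2\Big)^{1/2},
$$
where $\omega(\gamma_0,\theta_0)\to0$ as $\gamma_0,\theta_0\to0$. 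The function $w$ is obtained by freezing the leading coefficients at their average over the ball and, after rotating into the coordinate system of Assumption \ref{ass-RF}, comparing $u$ with the solution of a model problem: the constant-coefficient equation when $B_{r/16}(z)\subset\Omega$; the flat Dirichlet, resp. conormal, problem treated in \cite{DK11, DK12} when $B_{r/16}(z)$ meets $\partial\Omega$ but stays away from $\Gamma$; and the mixed Dirichlet--conormal problem on the half space with interface $\{x_1=0,x_2=0\}$ when $B_{r/16}(z)$ is near $\Gamma$, for which the first displayed bound with $q<4$ is exactly Theorem \ref{thm-harmonic-mixed-halfspace}. In this last, genuinely new case $w$ must be assembled from $u$ by combining a cut-off in the thin neighborhood of the codimension-two set $\Gamma$ --- thin precisely because $\Gamma$ is Reifenberg flat by Assumption \ref{ass-RF}$(ii)$ --- with an even reflection across $\{x_1=0\}$ over the conormal portion, the local Sobolev--Poincar\'e and reverse H\"older inequalities of Section 3 being what absorbs the error the cut-off creates. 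I expect this construction, and the verification of the second (mean-oscillation) estimate, to be the main obstacle: unlike in the purely Dirichlet or conormal settings, the model solution is only $W^{1,4-\epsi}$ rather than Lipschitz, so $\bar w$ enjoys only an $L^q$-bound with $q<4$, which cannot be upgraded to an $L^\infty$-bound and must be propagated through the remainder of the argument.

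With the approximation lemma at hand, the level-set argument proceeds as follows. Let $\cM$ denote the Hardy--Littlewood maximal operator over balls of radius $\le R_0$, applied to functions extended by zero off $\Omega$. Feeding the two estimates above into the weak $(1,1)$ bound for $\cM$ and into the Chebyshev inequality $\bigl|\{\bar w>t\}\cap B_\rho(z)\bigr|\le Nt^{-q}\bigl(\dashint_{B_\rho(z)}U^2\bigr)^{q/2}\abs{B_\rho(z)}$ --- where the $L^q$ input with $q<4$ takes the place of the usual $L^\infty$ input --- and then running a stopping-time/Vitali covering, one proves: there is $N=N(d,p,\Lambda)$ so that for every $\epsi>0$ and every $M\ge2$ there is $\kappa=\kappa(M,\epsi)>0$ for which, after taking $\gamma_0,\theta_0$ suitably small,
$$
\bigl|\{\cM(U^2)>MA^2\}\cap B_{R/2}\bigr|\le N(\epsi+M^{-q/2})\bigl|\{\cM(U^2)>A^2\}\cap B_R\bigr|+\bigl|\{\cM(F^2)>\kappa A^2\}\cap B_R\bigr|
$$
for all $A>0$. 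The gain here is $\epsi+M^{-q/2}$ rather than an arbitrarily small absolute constant; this is precisely the ``dedicated decay rate'' forced by $\bar w\in L^q$ with $q<4$. One now first chooses $M$ large so that $NM^{(p-q)/2}<1/4$ --- possible exactly because $p<q$, which only needs $p<4$ --- then $\epsi$ small so that $NM^{p/2}\epsi<1/4$, and sums the resulting inequality geometrically over the levels $A^2=M^k t_0$. Undoing the layer-cake formula, using $\cM(U^2)\ge U^2$ a.e., the strong $(p/2,p/2)$ bound for $\cM$ applied to $F^2$ (valid since $p/2>1$), and choosing $t_0\sim R^{-d}\int_{\Omega_R}U^2$ so that the lowest-level term becomes $R^{d(1/p-1/2)}\norm{U}_{L_2(\Omega_R)}$, one obtains the local estimate, and hence the proposition. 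The case $\lambda=0$, $f\equiv0$ is identical, using the second halves of Lemmas \ref{180514@lem1}, \ref{lem-reverse-holder} and of Theorem \ref{thm-harmonic-mixed-halfspace} with $U,F$ replaced by $|Du|,\sum_i|f_i|$.
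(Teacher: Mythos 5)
Your proposal follows essentially the same route as the paper: decompose $U$ locally into an $L^q$-bounded model part ($q<4$, from Theorem \ref{thm-harmonic-mixed-halfspace}) plus a small $L^2$ remainder, feed this into a level-set argument in which the decay rate is $\epsi + M^{-q/2}$ rather than arbitrarily small, and close by choosing $M$ large (using $p<q<4$) and then $\gamma_0,\theta_0$ small, followed by a geometric sum. The construction of the comparison function by cut-off near the codimension-two interface combined with even reflection across $\{x_1=0\}$, the use of the local Sobolev--Poincar\'e and reverse H\"older inequalities to control the cut-off error, and the order in which constants are chosen are all correctly identified. Two points, however, need repair.

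First, a technical slip in the comparison estimate. The right-hand side of your mean-oscillation bound should carry $\bigl(\dashint_{\Omega_r(z)}F^{2\mu}\bigr)^{1/(2\mu)}$ with $2\mu=p_0\in(2,p)$ from Lemma \ref{lem-reverse-holder}, not $\bigl(\dashint_{\Omega_r(z)}F^2\bigr)^{1/2}$. The higher moment of $F$ is forced: to extract the smallness factor $\theta^{1/(2\mu')}$ from the frozen-coefficient term you must apply H\"older with exponents $\mu,\mu'$ and then the reverse H\"older inequality to $\bigl(|Du|^{2\mu}\bigr)^{1/(2\mu)}$, which produces $\bigl(F^{2\mu}\bigr)^{1/(2\mu)}$ (not $\bigl(F^2\bigr)^{1/2}$, which is smaller) on the right. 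The same applies to the $L^q$ bound on the model part, which also picks up an $F$-term. This does not affect the structure: since $2\mu<p$ one still has the strong $\bigl(p/(2\mu),p/(2\mu)\bigr)$ bound for $\cM(F^{2\mu})$, so the conclusion $\norm{F}_{L_p}$ is unchanged.

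Second, and more substantively, the plan to ``first assume $u\in W^{1,p}_{\cD}$ a priori and remove the assumption by iterating via reverse H\"older'' does not close. Reverse H\"older gives $u\in W^{1,p_0}_{\loc}$ for a fixed $p_0\in(2,p)$ depending on $(d,p,\Lambda)$, and iterating it produces no further gain; and to run the level-set integral $\int_0^\infty p\,|\cA(s)|\,s^{p-1}\,ds$ as an a priori estimate you need $\norm{\cM(U^2)^{1/2}}_{L_p}<\infty$, i.e.\ exactly $u\in W^{1,p}$ --- the argument is circular. The paper avoids this entirely: it works with the truncated integral $\int_0^S p\,|\cA(s)|\,s^{p-1}\,ds$, which is finite for any $u\in W^{1,2}_{\cD}(\Omega)$ (by the weak $(1,1)$ bound for $\cM$, since $p>2$), proves the bound uniformly in $S$ after absorbing the recursive term, and lets $S\to\infty$. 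You should replace the ``a priori $+$ iterate'' step with this truncation; with that fix, the remainder of your argument is sound and matches the paper's proof in structure and content.
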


Based on Proposition \ref{prop-regularity}, we obtain the following
a priori estimate for the equations with lower order terms and large $\lambda$, which will be useful for the unique solvability in Theorem \ref{thm-well-posedness}.

\begin{corollary}\label{cor-no-u-rhs}
Let $p\in (2,4)$ and $\gamma_0,\theta_0$ be the constants from Proposition \ref{prop-regularity}.
Under Assumptions \ref{ass-RF} $(\gamma_0)$ and \ref{ass-smallBMO} $(\theta_0)$, there exists a positive constant $\lambda_1$ depending on $(d, p,\Lambda, R_0, K)$ such that if $u$ is a $W^{1,p}_{\cD}$ weak solution to the equation \eqref{eqn-main-large-lambda} with $f_i,f\in L_p(\Omega)$ and $\lambda>\lambda_1$, then we have
\begin{equation}		\label{181222@eq1}
\|U\|_{L_p(\Omega)}\le N\|F\|_{L_p(\Omega)},
\end{equation}
where $N=N(d,p,\Lambda)$.
\end{corollary}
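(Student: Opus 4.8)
The plan is to treat the lower-order terms of $L$ as part of the right-hand side, reducing \eqref{eqn-main-large-lambda} to the lower-order-free problem \eqref{eqn-main-no-lower}, then apply Proposition \ref{prop-regularity} and use the largeness of $\lambda$ to absorb the error terms this produces.

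First I would observe that if $u\in W^{1,p}_{\cD}(\Omega)$ solves \eqref{eqn-main-large-lambda}, then $u$ also solves \eqref{eqn-main-no-lower} with the same $\lambda$ and with the modified data
$$
\tilde f_i:=f_i-b_i u,\qquad \tilde f:=f-\hat b_i D_i u-c u,
$$
which lie in $L_p(\Omega)$ since $u\in W^{1,p}(\Omega)$ and the lower-order coefficients are bounded by $K$. (When $p>2$ and $\Omega$ is unbounded, so that $W^{1,p}_{\cD}(\Omega)\not\subset W^{1,2}_{\cD}(\Omega)$, this reduction and the estimate below are meant in the setting in which the corollary is applied in the proof of Theorem \ref{thm-well-posedness}, namely for solutions already known to lie in $W^{1,2}_{\cD}(\Omega)$; otherwise one would localize before testing.) Setting $\tilde F:=\sum_i|\tilde f_i|+\lambda^{-1/2}|\tilde f|$ and assuming $\lambda\ge 1$, I would use $\norm{b_i u}_{L_p(\Omega)}\le K\lambda^{-1/2}\norm{\sqrt\lambda\,u}_{L_p(\Omega)}\le K\lambda^{-1/2}\norm{U}_{L_p(\Omega)}$ and the analogous bounds for $\hat b_i D_i u$ and $c u$ to get $\norm{\tilde F}_{L_p(\Omega)}\le\norm{F}_{L_p(\Omega)}+NK\lambda^{-1/2}\norm{U}_{L_p(\Omega)}$. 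Plugging this into the conclusion of Proposition \ref{prop-regularity} for \eqref{eqn-main-no-lower} gives
$$
\norm{U}_{L_p(\Omega)}\le NR_0^{d(1/p-1/2)}\norm{U}_{L_2(\Omega)}+N\norm{F}_{L_p(\Omega)}+NK\lambda^{-1/2}\norm{U}_{L_p(\Omega)},
$$
and, $\norm{U}_{L_p(\Omega)}$ being finite, taking $\lambda$ so large that $NK\lambda^{-1/2}\le 1/2$ — a threshold depending only on $d,p,\Lambda,K$ — absorbs the last term and leaves
$$
\norm{U}_{L_p(\Omega)}\le NR_0^{d(1/p-1/2)}\norm{U}_{L_2(\Omega)}+N\norm{F}_{L_p(\Omega)}.
$$

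The remaining, genuinely delicate task would be to remove the error term $R_0^{d(1/p-1/2)}\norm{U}_{L_2(\Omega)}$, and this is exactly where $\lambda$ must be allowed to depend on $R_0$: this term is honestly present in Proposition \ref{prop-regularity} for fixed $\lambda>0$, and no size-independent inequality disposes of it (Hölder's inequality costs a power of $|\Omega|$ on bounded domains, and on unbounded domains $L_2$ and $L_p$ are incomparable). The mechanism I would exploit is that the zeroth-order term $-\lambda u$ supplies quantitative smallness at scale $R_0$: testing \eqref{eqn-main-no-lower} with $\eta^2 u$, $\eta$ a cut-off supported in a ball of radius $\sim R_0$, the coercive term $\lambda\int\eta^2 u^2$ beats the usual Caccioppoli term $R_0^{-2}\int u^2$ by a factor $(\lambda R_0^2)^{-1}$; combining this with Hölder's inequality on these finite-measure balls and a localized form of the estimate of Proposition \ref{prop-regularity} would yield, on each such ball, $\norm{U}_{L_p(\Omega_{R_0/2}(x))}\le N(\lambda R_0^2)^{-1/2}\norm{U}_{L_p(\Omega_{R_0}(x))}+N\norm{\tilde F}_{L_p(\Omega_{R_0}(x))}$, and a covering/iteration argument would upgrade this to $\norm{U}_{L_p(\Omega)}\le N(\lambda R_0^2)^{-1/2}\norm{U}_{L_p(\Omega)}+N\norm{F}_{L_p(\Omega)}$. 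A final absorption — valid once $\lambda R_0^2$ is large, i.e.\ for $\lambda>\lambda_1$ with $\lambda_1=\lambda_1(d,p,\Lambda,R_0,K)$ the larger of the two thresholds above — would then give \eqref{181222@eq1} with $N=N(d,p,\Lambda)$. I expect this last step, extracting and propagating the $(\lambda R_0^2)^{-1/2}$ gain into an $\Omega$-uniform absorption of the $\norm{U}_{L_2}$ term, to be the main obstacle; the earlier steps are routine perturbation and absorption once Proposition \ref{prop-regularity} is available.
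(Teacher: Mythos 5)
Your first two steps are fine, and you correctly diagnose that the real issue is the residual $L_2$ error term from Proposition \ref{prop-regularity}. But the mechanism you propose to kill it does not work, and as you yourself flag, the argument is left open at precisely that point. The $(\lambda R_0^2)^{-1/2}$ gain you extract from testing with $\eta^2 u$ is an $L_2$ Caccioppoli gain; to upgrade it to the claimed local $L_p$ estimate you would need exactly the $W^{1,p}$ regularity of Proposition \ref{prop-regularity}, which reintroduces the very $R_0^{d(1/p-1/2)}\norm{U}_{L_2}$ error you are trying to remove --- the scheme is circular. There is also a hypothesis mismatch you only gesture at in a parenthetical: Proposition \ref{prop-regularity} is stated for $u\in W^{1,2}_\cD(\Omega)$ with data in $L_p\cap L_2$, and in the unbounded setting a $W^{1,p}_\cD$ solution with $p>2$ need not satisfy this; applying the proposition globally, as you do, is therefore not directly legitimate.

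The paper's proof resolves both issues with one move: it localizes \emph{before} applying Proposition \ref{prop-regularity}. One fixes a small radius $\epsi\le R_0$ and a cutoff $\zeta\in C^\infty_c(B_\epsi)$, writes the equation satisfied by $\zeta u$ (the lower-order and commutator terms go into the data $f_i\zeta+h_i$, $f\zeta+h$), and applies \eqref{est-no-lower-order} to $\zeta u$, which is compactly supported and hence in $W^{1,2}_\cD$ with data in $L_2\cap L_p$. The decisive point, which you are missing, is that the $L_2$ error term now involves only $D(u\zeta)$ and $\sqrt{\lambda}\,u\zeta$, which are supported in $B_\epsi$; by H\"older's inequality on this finite-measure ball,
\begin{equation*}
\norm{D(u\zeta)}_{L_2(\Omega)}+\sqrt{\lambda}\norm{u\zeta}_{L_2(\Omega)}
\le \epsi^{d/2-d/p}\bigl(\norm{D(u\zeta)}_{L_p(\Omega)}+\sqrt{\lambda}\norm{u\zeta}_{L_p(\Omega)}\bigr),
\end{equation*}
and since $p>2$ means $d/2-d/p>0$, choosing $\epsi$ small (namely $N_0(\epsi/R_0)^{d/2-d/p}<1/2$) absorbs the $L_2$ term into the left-hand side with no help from $\lambda$ at all. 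Largeness of $\lambda$ is then used only in the final partition-of-unity step, to absorb the lower-order terms and the $|D\zeta|\lesssim 1/\epsi$ factors --- this is what introduces the $R_0$- and $K$-dependence of $\lambda_1$. Your proposal tries to make $\lambda$ do the work that small $\epsi$ should do, and no iteration over balls of radius $R_0$ fixes this.
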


The rest of this section is devoted to the proofs of Proposition \ref{prop-regularity} and Corollary \ref{cor-no-u-rhs}.

\subsection{Decomposition of $Du$}\label{subsection-decom}
We will use an interpolation argument to prove Proposition \ref{prop-regularity}. The key step is the following decomposition (approximation).

\begin{proposition}\label{prop-decom}
Suppose that $u\in W^{1,2}_{\cD}(\Omega)$ satisfies \eqref{eqn-main-no-lower}
with $\lambda>0$ and $f_i,f\in L_p(\Omega)\cap L_2(\Omega)$, where $p>2$.
Then under Assumptions \ref{ass-RF} $(\gamma)$ and \ref{ass-smallBMO} $(\theta)$ with $\gamma<1/(32\sqrt{d+3})$ and $\theta\in (0,1)$, for any $x_0 \in \overline{\Omega}$ and $R<R_0$, there exist positive functions $W, V \in L_2(\Omega_{R/32}(x_0))$ such that
$$U \leq W+V \quad \text{in } \Omega_{R/32}(x_0).$$
Moreover, we have for any $q < 4$,
\begin{align}
(W^2)^{1/2}_{\Omega_{R/32}(x_0)} &\leq N\big( (\theta^{\frac{1}{2\mu'}}+ \gamma^{\frac{1}{2\mu'}})(U^2)^{1/2}_{\Omega_R(x_0)} + (F^{2\mu})^{\frac{1}{2\mu}}_{\Omega_R(x_0)}\big),\label{eqn-est-W}\\
(V^q)^{1/q}_{\Omega_{R/32}(x_0)} &\leq N\big( (U^2)^{1/2}_{\Omega_R(x_0)} + (F^{2\mu})^{\frac{1}{2\mu}}_{\Omega_R(x_0)}\big).\label{eqn-est-V}
\end{align}
Here $\mu$ is a constant satisfying $2\mu=p_0$, where $p_0=p_0(d,p,\Lambda)>2$ comes from Lemma \ref{lem-reverse-holder}, and $\mu'$ satisfies $1/\mu+1/\mu'=1$.
The constant $N$ only depends on $d$, $p$, $q$, and $\Lambda$.

\end{proposition}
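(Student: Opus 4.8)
The plan is to run a Caffarelli--Peral type comparison argument (cf.\ \cite{Ca}), splitting into three cases according to the location of $x_0$: (i) an interior point, $B_{R/16}(x_0)\subset\Omega$; (ii) a point near $\partial\Omega$ but far from $\Gamma$, so that on the scale $\sim R$ the boundary condition is purely Dirichlet or purely conormal; and (iii) a point near $\Gamma$. In each case I fix the coordinate system associated with the nearest point of $\partial\Omega$ (resp.\ of $\Gamma$) from Assumption \ref{ass-RF}, set $r\sim R$, freeze the leading coefficients to the constant symmetric elliptic matrix $\bar{\vec A}=(a_{ij})_{\Omega_r(x_0)}$, and let $v$ solve the frozen equation $D_i(\bar a_{ij}D_jv)-\lambda v=0$ on the flattened half-ball $B_r^{+}$ with the \emph{same type} of boundary conditions as $u$ (homogeneous Dirichlet on the flat Dirichlet piece, homogeneous conormal on the flat conormal piece, and $v=u$ on the spherical part). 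I then take $W:=|D(u-v)|+\sqrt{\lambda}\,|u-v|$ on $\Omega_{R/32}(x_0)\cap B_r^{+}$ and $W:=U$ on the thin leftover set $\Omega_{R/32}(x_0)\setminus B_r^{+}$, and $V:=|Dv|+\sqrt{\lambda}\,|v|$ extended by zero, so that $U\le W+V$ holds trivially; it remains to prove \eqref{eqn-est-W} and \eqref{eqn-est-V}.

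The estimate \eqref{eqn-est-V} for $V$ follows from regularity of the frozen model problem. In case (i) this is the classical interior $W^{1,q}$ estimate for constant-coefficient equations (all $q<\infty$); in case (ii) it is the boundary $W^{1,q}$ estimate obtained by odd, respectively even, reflection across the flat boundary after an affine change of variables normalizing the conormal direction, exactly as in \cite{DK11,DK12}. Case (iii) is the genuinely new one: I first apply a linear change of variables turning $D_i(\bar a_{ij}D_j\cdot)-\lambda$ into $\Delta-\lambda$ and, composing with a rotation, sending $\{x_1=0\}$ to $\{z_1=0\}$ and the codimension-$2$ interface to $\{z_1=z_2=0\}$ with the two sides mapped to the two half-hyperplanes. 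Since the conormal operator is geometric, the transformed function $\tilde v$ solves precisely the half-space mixed problem of Theorem \ref{thm-harmonic-mixed-halfspace} on a domain comparable to a half-ball; applying that theorem (and transforming back) yields $v\in W^{1,q}$ for every $q<4$ with averaged bound controlled by $(U_v^2)^{1/2}_{B_r^{+}}$, which in turn is $\lesssim (U^2)^{1/2}_{\Omega_R(x_0)}+(F^2)^{1/2}_{\Omega_R(x_0)}$ by the energy estimate for $v$; since averaged $L_2$ norms of $F$ are dominated by averaged $L_{2\mu}$ norms, \eqref{eqn-est-V} follows. The ``cut-off and reflection'' bookkeeping required to realize $v$ near $\Gamma$ simultaneously as a comparison function for $u$ and (after the change of variables) as an honest model solution, while controlling the distortion of the half-ball and of $\Gamma$, is where most of the work concentrates; note $v$ is only $W^{1,4-\epsi}$, not Lipschitz, so $q$ must be kept strictly below $4$.

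For \eqref{eqn-est-W} I test the equation for $w=u-v$ on the common region with $w$ itself. The leading term is $\int|{\vec A}-\bar{\vec A}|\,|Du|\,|Dw|$, bounded via H\"older with exponents $(2\mu',2\mu,2)$ by $\||{\vec A}-\bar{\vec A}|\|_{L_{2\mu'}(\Omega_r(x_0))}\,\|Du\|_{L_{2\mu}(\Omega_r(x_0))}\,\|Dw\|_{L_2}$; since $|{\vec A}-\bar{\vec A}|\le 2\Lambda^{-1}$ with average $<\theta$ (Assumption \ref{ass-smallBMO}), the first factor is $\lesssim\theta^{1/(2\mu')}r^{d/(2\mu')}$, while the reverse H\"older inequality of Lemma \ref{lem-reverse-holder} with $p_0=2\mu$ bounds $\|Du\|_{L_{2\mu}(\Omega_r(x_0))}$ by $r^{-d/(2\mu')}\big(\|U\|_{L_2(\Omega_{2r}(x_0))}+\|F\|_{L_{2\mu}(\Omega_{2r}(x_0))}\big)$; the remaining right-hand side terms involving $F$ are handled directly, and absorbing $\|Dw\|_{L_2}$ gives, in averaged form, the $\theta^{1/(2\mu')}(U^2)^{1/2}_{\Omega_R}+(F^{2\mu})^{1/(2\mu)}_{\Omega_R}$ bound. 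The extra $\gamma^{1/(2\mu')}(U^2)^{1/2}_{\Omega_R}$ term accounts for the thin mismatch set between $\Omega_r(x_0)$ and the flattened half-ball, of measure $\lesssim\gamma r^d$: there one uses the crude bound $\|U\|_{L_2}+\|Dv\|_{L_2}\le(\gamma r^d)^{1/(2\mu')}\big(\|U\|_{L_{2\mu}}+\|Dv\|_{L_{2\mu}}\big)$ together with the reverse H\"older inequality and the $L_{2\mu}$ bound for $Dv$ from the previous step (here $2\mu=p_0<4$ is what makes $Dv\in L_{2\mu}$ legitimate). The smallness requirement $\gamma<1/(32\sqrt{d+3})$ is the technical condition needed so that Assumptions \ref{ass-RF}--\ref{ass-smallBMO} and Corollary \ref{180514@cor1} remain applicable after the auxiliary dimension is introduced in the Agmon-type reduction underlying the $\lambda>0$ model estimate. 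The main obstacle is case (iii): designing the approximation near $\Gamma$ so that its low $W^{1,4-\epsi}$ regularity is both sufficient for the level-set scheme and compatible with the comparison estimate, which forces careful tracking of the exponents $\mu,\mu'$ and $q$ throughout.
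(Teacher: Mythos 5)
Your high-level strategy is right---freeze the leading coefficients, compare to a constant-coefficient model solution $v$ that enjoys $W^{1,q}$ regularity for $q<4$ by Theorem \ref{thm-harmonic-mixed-halfspace}, and control the error in $L_2$ using Assumption \ref{ass-smallBMO} and the reverse H\"older inequality of Lemma \ref{lem-reverse-holder}---and you correctly identify the change of variables needed to reduce the frozen operator to $\Delta-\lambda$ on a half ball. But the specific comparison you set up in Case (iii) does not close. You prescribe $v=u$ on the spherical boundary of $B_r^+$, homogeneous Dirichlet on $\Gamma^+=\{x_1=0,x_2>0\}$, homogeneous conormal on $\Gamma^-$, and then propose to ``test the equation for $w=u-v$ with $w$ itself.'' The obstruction is that $\{x_1=0\}\cap B_r(y_0)$ lies \emph{inside} $\Omega$ (recall $\partial\Omega\cap B_{R/4}(y_0)\subset\{-\gamma R/2<x_1<0\}$), so $u$ satisfies no boundary condition there; in particular $w=u-v$ does not vanish on $\Gamma^+$, hence $w$ is not an admissible test function, and the energy identity picks up uncontrolled boundary integrals of $a_{ij}D_j u\,n_i\,w$ on $\Gamma^\pm$. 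You flag the ``cut-off and reflection bookkeeping'' as ``where most of the work concentrates'' but never perform it---and that bookkeeping is not bookkeeping: it is precisely what makes the energy estimate legitimate. The paper's mechanism is to introduce a cut-off $\chi$ with $D\chi$ supported in an L-shaped neighborhood of $\partial\Omega\cup\Gamma^+$ so that $\chi u$ vanishes identically near the Dirichlet boundary, then to reflect evenly through $\{x_1=0\}$ on the $x_2<0$ side; this yields (Lemma \ref{lem-RF-as-pert-flat}) a genuine mixed problem for $\chi u$ on the half ball $\Omega_{R/4}^+$ with explicit source terms $g^{(1)},\dots,g^{(5)}$. One then solves for the \emph{error} $\hat w\in W^{1,2}_{\partial\Omega_{R/4}^+\setminus\Gamma^-}(\Omega_{R/4}^+)$ with those sources (so testing with $\hat w$ has no spurious boundary terms), and only afterwards sets $v=\chi u-\hat w$. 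Your $\gamma^{1/(2\mu')}$ term also has a second, distinct source that your sketch omits: the $uD\chi$-type contributions in $g_i^{(1)},g_i^{(3)}$, which require the boundary Poincar\'e inequality on a grid-cover of $\supp D\chi$ (Lemma \ref{lem-usingpoincare}), not just a H\"older/reverse-H\"older argument on a thin mismatch set.

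A secondary but real inaccuracy: the smallness threshold $\gamma<1/(32\sqrt{d+3})$ has nothing to do with Agmon's dimension-adding trick for $\lambda>0$. It is the geometric constraint ensuring $\bigcup_{z\in\cD_{grid}}\Omega_{\sqrt{d+3}\,\gamma R}(z)\subset\Omega_{R/3}$ in the covering used to apply Corollary \ref{180514@cor1} to the L-shaped region $\supp D\chi$; the Agmon argument appears only in the proof of Theorem \ref{thm-harmonic-mixed-halfspace} and imposes no constraint on $\gamma$.
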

The rest of Section \ref{subsection-decom} will be devoted to the proof of this proposition.
\begin{proof}
According to the relative position of $x_0$ to $\cD, \cN$, we will discuss the following 3 cases.
	
{\em Case 1}: $\dist(x_0,\p\Omega)\geq R/32$.
	
In this case, our decomposition is only concerned with the interior of $\Omega$. We can do the usual ``freezing coefficient'' approximation. The existence of such $W,V$ can be found in \cite[Lemma~8.3~(i)]{DK11} or \cite[Lemma~5.1~(i)]{DK12}.

In the next two cases, we also need to approximate the Reifenberg flat boundary by hyperplane and deal with corresponding boundary conditions.

{\em Case 2}: $\dist(x_0,\p\Omega)<R/32$, $\dist(x_0,\Gamma)\geq R/24$.

In this case, either we have $B_{R/24}(x_0)\cap\cN=\emptyset$ or $B_{R/24}(x_0)\cap\cD=\emptyset$. Correspondingly, we only deal with purely Dirichlet or purely conormal boundary condition. The functions $W$ and $V$ are constructed in $\Omega_{R/24}(x_0)$. Then for the estimates, we need to shrink the radius to $R/32$. Such construction and estimates can be found in \cite[Lemma~8.3~(ii)]{DK11} and \cite[Lemma~5.1~(ii)]{DK12}. Briefly, we approximate the neighborhood of $\Omega_{R/24}(x_0)$ by a half ball thanks to the small Reifenberg flat assumption. Then we apply a cutoff technique for the Dirichlet case, or a reflection technique for the conormal case. All these two techniques will be introduced in Case 3 below.

In both Cases 1 and 2, actually we can take $q=\infty$ in \eqref{eqn-est-V}.
	
{\em Case 3}: $\dist(x_0,\p\Omega)<R/32$, $\dist(x_0,\Gamma) < R/24$.

In this case, we deal with the ``mixed'' boundary condition. Take $y_0\in \Gamma$ with $\dist(y_0, x_0)<R/24$. Consider the coordinate system associated with $(y_0,R/4)$ as in Assumption \ref{ass-RF} $(\gamma)$. For simplicity, we shift the origin in $x'=(x_1,x_2)-$hyperplane, such that
\begin{equation*}
\p\Omega\cap B_{R/4}(y_0)\subset\set{-\gamma R/2<x_1<0},
\end{equation*}
\begin{equation}\label{eqn-moving-coordinates}
\Gamma \cap B_{R/4}(y_0)\subset\set{-\gamma R/2<x_2<0}.
\end{equation}

In the following, we will omit the center when it is $y_0$. For example,
$$\Omega_{R/4}:= \Omega\cap B_{R/4}(y_0),\quad \Omega_{R/4}^{+}:=\Omega_{R/4}\cap\bR^d_{+},\quad \Omega_{R/4}^{-}:=\Omega_{R/4}\cap\bR^d_{-},
$$
where $\bR^d_-=\{x\in \bR^d:x_1<0\}$.
Note that this is slightly different from the usual convention that we omit the center when it is the coordinate origin. The following inclusion relation will be useful:
\begin{equation}
                        \label{eq11.37}
\Omega_{R/32}(x_0)\subset\Omega_{R/8}\subset\Omega_{R/2}\subset\Omega_R(x_0).
\end{equation}

Now we start to construct the decomposition. First, we introduce a cut-off function $\chi\in C^\infty(\bR^d)$ with $D\chi$ supported in a ``L-shaped'' domain, satisfying
$$\begin{cases}
\chi =0, \quad \text{on }\set{x_2>-\gamma R}\cap \set{x_1<\gamma R},\\
\chi =1, \quad \text{on }\set{x_2<-2\gamma R}\cup \set{x_1>2\gamma R},\\
0\leq \chi \leq 1, |D\chi|\leq \frac{2}{\gamma R}.
\end{cases}$$
	
The following two lemmas should be read as parts of the proof of Proposition \ref{prop-decom}. The first one is an important estimate of a typical term in our proof. Both the inequality itself and the decomposition technique in the proof will be used later.
\begin{lemma}\label{lem-usingpoincare}
We have
$$
(\abs{D\chi  u}^2)^{1/2}_{\Omega_{R/4}} \leq N \gamma^{1/(2\mu')}\big((U^2)^{1/2}_{\Omega_R(x_0)} + (F^{2\mu})^{1/(2\mu)}_{\Omega_R(x_0)}\big),
$$
where $N=N(d,p,\Lambda)$.
\end{lemma}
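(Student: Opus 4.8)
The plan is to exploit that $D\chi$ is supported in the thin ``$L$-shaped'' region $E_0:=\supp(D\chi)\cap\Omega_{R/4}$ which, by the construction of $\chi$ together with \eqref{eqn-moving-coordinates} and Assumption \ref{ass-RF} $(\gamma)$, lies inside a slab $\{-\gamma R/2<x_1<2\gamma R\}$ of width comparable to $\gamma R$ in the $x_1$-direction; hence $|D\chi\,u|\le(2/\gamma R)|u|\,\bI_{E_0}$ and $\int_{\Omega_{R/4}}|D\chi\,u|^2\,dx\le(4/\gamma^2R^2)\int_{E_0}|u|^2\,dx$. The idea is to extract \emph{two} independent smallness factors from $\int_{E_0}|u|^2$: a factor $(\gamma R)^2$ from a Poincar\'e inequality (to absorb the $(\gamma R)^{-2}$ coming from $|D\chi|^2$), and then a small power $\gamma^{1/\mu'}$ from H\"older's inequality with the conjugate pair $(\mu,\mu')$, $2\mu=p_0$, applied on the thin slab, which is finally closed by the reverse H\"older inequality of Lemma \ref{lem-reverse-holder}.

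For the Poincar\'e step, the key geometric observation is that every point of $E_0$ can be joined to $\cD$ by a path of length $\le N\gamma R$ contained in a slightly enlarged region $S\subset\Omega$ which is again a slab of width $\le N\gamma R$ in the $x_1$-direction with $S\subset B_{R/2}(x_0)$ (the last inclusion uses $|x_0-y_0|<R/24$ and the smallness of $\gamma$): for points whose $x_2$-coordinate is bounded away from that of $\Gamma$ one descends in the $x_1$-direction onto the Dirichlet portion of $\partial\Omega$ furnished by Assumption \ref{ass-RF} $(\gamma)$ $(ii)$, while for points near $\Gamma$ one first moves up in $x_2$ past $\Gamma$ and only then descends. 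Since $u\in W^{1,2}_\cD(\Omega)$, integrating $Du$ along such paths (after averaging over nearby parallel slices, to avoid pointwise traces of $Du$) gives
$$
\int_{E_0}|u|^2\,dx\le N(\gamma R)^2\int_{S}|Du|^2\,dx,
$$
and therefore $\int_{\Omega_{R/4}}|D\chi\,u|^2\,dx\le N\int_S|Du|^2\,dx$.

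To conclude I would apply H\"older's inequality with exponents $\mu=p_0/2$ and $\mu'$ on $S$, using $|Du|\le U$, to get
$$
\int_S|Du|^2\,dx\le|S|^{1/\mu'}\Big(\int_SU^{2\mu}\,dx\Big)^{1/\mu}\le N(\gamma R^d)^{1/\mu'}\Big(\int_{B_{R/2}(x_0)}\overline U^{\,p_0}\,dx\Big)^{2/p_0},
$$
where I used $|S|\le N\gamma R^d$ and $S\subset B_{R/2}(x_0)\subset B_R(x_0)$. The reverse H\"older inequality of Lemma \ref{lem-reverse-holder} (applicable since $R<R_0\le R_1$) bounds $(\overline U^{\,p_0})^{1/p_0}_{B_{R/2}(x_0)}$ by $N(\overline U^2)^{1/2}_{B_R(x_0)}+N(\overline F^{\,p_0})^{1/p_0}_{B_R(x_0)}$, and since the zero extensions satisfy $(\overline U^2)_{B_R(x_0)}\le(U^2)_{\Omega_R(x_0)}$ and $(\overline F^{\,p_0})_{B_R(x_0)}\le(F^{\,p_0})_{\Omega_R(x_0)}$, while $1/\mu+1/\mu'=1$ forces $d/\mu'+2d/p_0=d$, all powers of $R$ cancel; dividing by $|\Omega_{R/4}|\ge NR^d$ and taking square roots yields exactly
$$
(|D\chi\,u|^2)^{1/2}_{\Omega_{R/4}}\le N\gamma^{1/(2\mu')}\big((U^2)^{1/2}_{\Omega_R(x_0)}+(F^{2\mu})^{1/(2\mu)}_{\Omega_R(x_0)}\big).
$$

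I expect the Poincar\'e step to be the main obstacle: $W^{1,2}_\cD$ is defined by completion, so there is no pointwise trace to invoke, and near $\Gamma$ the boundary is only ``almost flat'', so the paths from $E_0$ to $\cD$ must be chosen carefully to avoid both the conormal part of $\partial\Omega$ and $\Gamma$ itself — which is precisely what the ``move up in $x_2$, then descend in $x_1$'' construction does. Making this slicing argument rigorous and keeping track of the enlarged region $S$ on which the integrated gradient lives is the technical core; everything afterwards is H\"older's inequality together with the already-established reverse H\"older estimate. This slicing decomposition of $D\chi\,u$ into path integrals of $Du$ is, I believe, the ``decomposition technique'' referred to in the statement, to be reused for the remaining error terms in the proof of Proposition \ref{prop-decom}.
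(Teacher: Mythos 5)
Your overall architecture is the same as the paper's: bound $|D\chi|$ by $2/(\gamma R)$ on the thin slab, extract one power of $(\gamma R)^2$ by a Poincar\'e-type inequality using the vanishing on $\cD$, extract the remaining $\gamma^{1/\mu'}$ by H\"older's inequality on the thin slab with the conjugate pair $(\mu,\mu')$, and close with the reverse H\"older inequality of Lemma~\ref{lem-reverse-holder}. The exponent bookkeeping in your H\"older and reverse H\"older steps is correct.

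The genuine gap is exactly the one you flag: the Poincar\'e step is not proved. Your plan is a slicing/path-integral argument (``join each point of $E_0$ to $\cD$ by a path of length $\lesssim\gamma R$, integrate $Du$ along it, average over parallel slices''), and you leave it as a ``technical core'' to be done. On a Reifenberg flat domain this is not just a technicality: paths of controlled length inside $\Omega$ to a $\cD$-patch are not straightforward to produce when $\partial\Omega$ is not a graph; lines in the $x_1$-direction may enter and exit $\Omega$; and ``integrate $Du$ to the boundary'' presupposes a trace notion that is not available in $W^{1,2}_\cD$ as defined by completion. The correct way to make the trace issue disappear is to extend $u$ by zero across the Dirichlet portion — which is not what your path construction does, and which is precisely the content of Corollary~\ref{180514@cor1}.

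The paper instead covers $\supp(D\chi)\cap\Omega_{R/4}$ by a grid of balls $\Omega_{\sqrt{d+3}\gamma R}(z)$, $z\in\cD_{grid}$, with bounded overlap and contained in $\Omega_{R/3}$; it verifies \eqref{eqn-mainly-Dirichlet} in each such ball (i.e.\ each ball contains a $\cD$-patch of definite relative size, placed by Assumption~\ref{ass-RF}~(ii) since the grid points sit at $x_2\gtrsim-\gamma R$, above $\Gamma$); then applies Corollary~\ref{180514@cor1}~(b) in each ball, sums, and uses the finite overlap to land on $\|\bI_{\{|x_1|<2\sqrt{d+3}\gamma R\}}Du\|^2_{L_2(\Omega_{R/3})}$. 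This is exactly your ``slab $S$'' integral, obtained without any path-following. You should replace your slicing argument by this covering plus Corollary~\ref{180514@cor1}~(b); that corollary was proved earlier in the paper for precisely this purpose, and invoking it is both cleaner and actually rigorous, whereas your version would essentially be re-deriving it from scratch in a harder setting.
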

\begin{proof}
From the construction of $\chi$, we have
\begin{equation*}
\norm{D\chi  u}_{L_2(\Omega_{R/4})}\leq \frac{2}{\gamma R}\norm{\bI_{\supp\set{D\chi}}u}_{L_2(\Omega_{R/4})}.
\end{equation*}
Now we decompose the set $\supp\set{D\chi}\cap \Omega_{R/4}$ to obtain the required smallness. Consider the following grid points on $\p\bR^d_{+}$:
$$\cD_{grid} := \set{z\in \bR^d : z=(0,k\gamma R) \text{ for }k=(k_2,\ldots,k_d) \in \mathbb{Z}^{d-1},k_2\ge -1}\cap \Omega_{R/4}.$$
Clearly $\bigcup_{z\in\cD_{grid}}\Omega_{\sqrt {d+3}\gamma R}(z)$ covers $\supp\set{D\chi}\cap \Omega_{R/4},$ and because $\gamma<1/(32\sqrt{d+3})$
\begin{equation*}
\bigcup_{z\in\cD_{grid}}\Omega_{\sqrt {d+3}\gamma R}(z)\subset \Omega_{R/3},
\end{equation*}
with each point covered by at most $N(d)$ of such neighborhoods. Due to \eqref{eqn-moving-coordinates}, we know that in each $\Omega_{\sqrt{d+3}\gamma R}(z)$, \eqref{eqn-mainly-Dirichlet} is satisfied with
$$
(z_0,\alpha)=(z+(c,(-1+\sqrt{d+3})\gamma R/2,0,\cdots,0), 1/4),
$$
where $c\in(-\gamma R/2,0)$ is chosen carefully to guarantee $z_0\in \p\Omega$. Hence we can apply the Poincar\'e inequality stated in Corollary \ref{180514@cor1} and H\"older's inequality to obtain
\begin{align}
\norm{\bI_{\supp\set{D\chi}} u}_{L_2(\Omega_{R/4})}^2
&\leq N\sum_{z\in \cD_{grid}}\norm{u}_{L_2(\Omega_{\sqrt {d+3}\gamma R}(z))}^2\nonumber \\	
&\leq N(\gamma R)^2\sum_{z\in \cD_{grid}}\norm{Du}_{L_2(\Omega_{2\sqrt {d+3}\gamma R}(z))}^2\nonumber\\
&\leq N(\gamma R)^2\norm{\bI_{|x_1|<2\sqrt {d+3}\gamma R}Du}_{L_2(\Omega_{R/3})}^2\label{eqn-overlap}\\
&\leq N(\gamma R)^2\cdot(\gamma R^d)^{1/\mu'}\norm{Du}_{L_{2\mu}(\Omega_{R/3})}^2.
\label{eqn-holder-area-suppchi-smallness}
\end{align}
To obtain \eqref{eqn-holder-area-suppchi-smallness}, we used H\"older's inequality. Now we rewrite this using the notation of average and use a properly rescaled version of Lemma \ref{lem-reverse-holder} as well as \eqref{eq11.37} to obtain
\begin{align*}
(\abs{D\chi u}^2)^{1/2}_{\Omega_{R/4}} &\leq N\gamma^{1/(2\mu')}(\abs{Du}^{2\mu})^{1/(2\mu)}_{\Omega_{R/3}}\\
&\leq N \gamma^{1/(2\mu')}\big((U^2)^{1/2}_{\Omega_R(x_0)} + (F^{2\mu})^{1/(2\mu)}_{\Omega_R(x_0)}\big).
\end{align*}
The lemma is proved.
\end{proof}

The second lemma shows how we ``freeze'' the boundary to be a hyperplane using a cut-off technique together with a reflection.
\begin{lemma}\label{lem-RF-as-pert-flat}
The function $\chi u \in W^{1,2}(\Omega_{R/4}^{+})$ satisfies the following equation in the weak sense
\begin{equation}\label{eqn-reflected}
\begin{cases}
D_i(a_{ij}D_j(\chi u)) - \lambda \chi u= D_ig_i^{(1)} + D_ig_i^{(2)} + g_i^{(3)}D_i\chi + g_i^{(4)}D_i\widetilde{\chi} + g^{(5)} & \text{in } \Omega_{R/4}^{+},\\
a_{ij}D_j(\chi u)n_i = g_i^{(1)}n_i +g_i^{(2)}n_i & \text{on } \Gamma^{-},\\
\chi u=0 &\text{on } \Gamma^{+},
\end{cases}
\end{equation}
where
\begin{align*}
g_i^{(1)}&=a_{ij}uD_j\chi + f_i\chi ,\quad g_i^{(2)}=(-\epsi_{i}\epsi_{j}\widetilde{a_{ij}}\widetilde{\chi}D_j\widetilde{u} + \epsi_{i}\widetilde{\chi}\widetilde{f_i})\bI_{(-x_1,x_2,x'')\in\Omega_{R/4}^{-}},\\
g_i^{(3)}&=a_{ij}D_ju-f_i,\quad g_i^{(4)}=(\epsi_i\epsi_j  \widetilde{a_{ij}}D_j\widetilde{u}-\epsi_i\widetilde{f_i})\bI_{(-x_1,x_2,x'')\in\Omega_{R/4}^{-}},\\
g^{(5)}&= \chi f  + \widetilde{\chi}\widetilde{f}\bI_{(-x_1,x_2,x'')\in\Omega_{R/4}^{-}}+ \lambda\widetilde{\chi}\widetilde{u}\bI_{(-x_1,x_2,x'')\in\Omega_{R/4}^{-}},\\
\Gamma^+&=\partial \Omega_{R/4}^+\cap \{x_1=0,x_2>0\}, \quad \Gamma^-=\partial \Omega_{R/4}^+\cap \{x_1=0,x_2<0\}.
\end{align*}
Here we denote $\widetilde{f}(x_1,x_2,x''):=f(-x_1,x_2,x'')$, and similarly for $\widetilde{a_{ij}}, \widetilde{\chi}$, and $\widetilde{u}$. We also use the following notation
$$\epsi_{i}:=\begin{cases}
-1& \text{if }\,\, i=1,\\
1&\text{if }\,\, i\neq 1.
\end{cases}$$
\end{lemma}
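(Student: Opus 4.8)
The plan is to verify \eqref{eqn-reflected} by a direct integration-by-parts computation. The idea is standard: we have a weak solution $u$ on a domain $\Omega_{R/4}$ whose boundary, in the chosen coordinate system, is squeezed into the slab $\{-\gamma R/2 < x_1 < 0\}$, with the Dirichlet part near $\{x_2 > 0\}$ and the conormal part near $\{x_2 < 0\}$. The cut-off $\chi$ is designed to vanish on the "L-shaped" neighborhood of $\Gamma$ (where $x_2 > -\gamma R$ and $x_1 < \gamma R$), so that $\chi u$ is supported away from the relative boundary $\Gamma$ and away from the actual curved portion of $\partial\Omega$ in the region $x_1 < \gamma R$; this lets us reflect $\chi u$ evenly across $\{x_1 = 0\}$ into the half-ball $B^+_{R/4}$ without worrying about the genuine boundary of $\Omega$. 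First I would fix a test function $\phi \in W^{1,2}(\Omega^+_{R/4})$ vanishing on $\partial\Omega^+_{R/4}\setminus\Gamma^-$, extend it evenly in $x_1$ to a function $\Phi$ on $\Omega_{R/4}$ (noting that $\chi\Phi$, suitably interpreted as $\chi\Phi$ on $\Omega^+$ and $\widetilde\chi\Phi$ on $\Omega^-$, is an admissible test function for the original equation \eqref{eqn-main-no-lower} since $\chi$ vanishes near $\cD$ and $\Phi$ vanishes near the rest of the boundary), and substitute.

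\textbf{Key steps.} The computation splits into the contribution from $\Omega^+_{R/4}$ and from $\Omega^-_{R/4}$. On the $x_1 > 0$ side, the Leibniz rule gives $D_i(a_{ij}D_j(\chi u)) = D_i(a_{ij}\chi D_j u) + D_i(a_{ij} u D_j\chi)$; expanding $a_{ij}\chi D_j u = a_{ij} D_j(\chi u) - a_{ij} u D_j\chi$ and using the equation $D_i(a_{ij}D_j u) - \lambda u = f + D_i f_i$ produces, after collecting terms, exactly the divergence-form terms $D_i g_i^{(1)}$ (with $g_i^{(1)} = a_{ij}uD_j\chi + f_i\chi$) together with the terms $g_i^{(3)}D_i\chi$ (coming from $(a_{ij}D_j u - f_i)D_i\chi$) and $\chi f$ and $\lambda\chi u$. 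On the $x_1 < 0$ side, after the even reflection $\Phi(x_1,\cdot) = \Phi(-x_1,\cdot)$, a change of variables $x_1 \mapsto -x_1$ turns every integral over $\Omega^-_{R/4}$ into one over its reflection inside $\Omega^+_{R/4}$; each derivative $D_1$ picks up a sign, which is precisely what the bookkeeping factor $\epsi_i$ encodes, and the reflected coefficients/data become $\widetilde{a_{ij}}, \widetilde{f_i}, \widetilde u, \widetilde\chi$. This accounts for the tilde-terms in $g_i^{(2)}, g_i^{(4)}, g^{(5)}$, each carrying the indicator $\bI_{(-x_1,x_2,x'')\in\Omega^-_{R/4}}$ to record that the reflected point must have actually lain in $\Omega^-$. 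Finally, I would check that no boundary integral survives: on $\Gamma^+$ (Dirichlet side) $\chi u = 0$ because $\chi$ vanishes there; on the would-be interface $\{x_1 = 0\}$ the even symmetry of the reflected configuration makes the normal-derivative jump cancel, leaving only the conormal condition $a_{ij}D_j(\chi u)n_i = g_i^{(1)}n_i + g_i^{(2)}n_i$ on $\Gamma^-$, which is read off from the boundary terms that remain after integration by parts against $\phi$.

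\textbf{Main obstacle.} The genuinely delicate point is the \emph{admissibility of the test function} and the \emph{geometry of the reflection}: one must confirm that $\chi\Phi$ (equivalently, the pair $\chi\Phi$ on $\Omega^+$, $\widetilde\chi\Phi$ on $\Omega^-$) really does lie in $W^{1,2}_\cD(\Omega)$, which hinges on $\chi \equiv 0$ throughout $\{x_2 > -\gamma R\}\cap\{x_1 < \gamma R\}$ — a region that, by \eqref{eqn-moving-coordinates} and Assumption \ref{ass-RF} $(\gamma)$ $(ii)$, contains all of $\Gamma\cap B_{R/4}(y_0)$ together with the part of $\partial\Omega$ sitting in the slab $\{x_1 < 0\}$ where the Dirichlet condition might hold. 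One also needs that wherever $\chi \not\equiv 0$ on the $x_1 < 0$ side, namely in $\{x_1 > -\gamma R/2\}$ intersected with $\{x_2 < -\gamma R\}$, the set $\Omega^-_{R/4}$ behaves well under reflection — in particular that $\partial\Omega\cap B_{R/4}(y_0)\subset\{-\gamma R/2 < x_1 < 0\}$ makes the reflected half lie strictly inside $B^+_{R/4}$, so the indicators $\bI_{(-x_1,x_2,x'')\in\Omega^-_{R/4}}$ are the correct domains for the reflected data and no spurious interface boundary terms are generated on $\{x_1 = 0, x_2 > 0\}$. Once these geometric facts are pinned down, the rest is the routine (if lengthy) Leibniz-and-change-of-variables bookkeeping sketched above.
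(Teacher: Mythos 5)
Your overall strategy is the same as the paper's: extend a test function $\psi$ on $\Omega_{R/4}^{+}$ (vanishing on $\partial\Omega_{R/4}^{+}\setminus\Gamma^{-}$) evenly across $\{x_1=0\}$, multiply by $\chi$ to get an admissible test function in $W^{1,2}_{\cD}(\Omega)$, test \eqref{eqn-main-no-lower}, and rearrange. The geometric observations you flag (the role of $\partial\Omega\cap B_{R/4}\subset\{-\gamma R/2<x_1<0\}$, the reason the reflected set lies in $\Omega_{R/4}^{+}$, the reason $\chi=0$ covers a neighborhood of $\cD$) are the right ones, and match what makes the paper's one-line proof work.

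However, there is a concrete error in your identification of the test function. You describe it as ``the pair $\chi\Phi$ on $\Omega^+$, $\widetilde\chi\Phi$ on $\Omega^-$'', i.e., the \emph{even reflection of} $\chi\phi$. The paper instead tests with $\chi\cE\psi$, where $\cE\psi$ is the even extension of $\psi$ but $\chi$ is evaluated at the \emph{original} point: on $\Omega^-$ the test function is $\chi(x)\psi(-x_1,x_2,x'')$, not $\chi(-x_1,x_2,x'')\psi(-x_1,x_2,x'')$. These are genuinely different functions, since $\chi$ cannot be even in $x_1$ (it is required to be $0$ for $x_1<\gamma R,\, x_2>-\gamma R$ and $1$ for $x_1>2\gamma R$, which is incompatible with $\chi(x_1,\cdot)=\chi(-x_1,\cdot)$). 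This matters because the $\widetilde\chi$ and $D_i\widetilde\chi$ in $g_i^{(2)}$, $g_i^{(4)}$, $g^{(5)}$ arise precisely when the unreflected $\chi(x)$ on $\Omega^-$ turns into $\chi(-y_1,\cdot)=\widetilde\chi(y)$ under the substitution $y=(-x_1,x_2,x'')$. If you instead run your calculation with $\widetilde\chi\Phi$ on $\Omega^-$, the change of variables converts that $\widetilde\chi$ back to $\chi$, and you would arrive at a formula with $\chi$ in place of $\widetilde\chi$ in $g_i^{(2)}$, $g_i^{(4)}$, $g^{(5)}$ — a valid identity for $\chi u$, but not the one stated in the lemma. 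So the fix is simple: take the test function to be $\chi(x)\cE\psi(x)$ throughout $\Omega$ (with $\chi$ unreflected), and the stated formulas fall out.

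A smaller terminological point: in the weak formulation there are no surviving boundary integrals to ``read off'' the conormal condition; the conormal datum $g_i^{(1)}n_i+g_i^{(2)}n_i$ on $\Gamma^-$ is encoded by the choice of test functions that need not vanish on $\Gamma^-$, together with the fact that $g_i^{(1)}+g_i^{(2)}$ are the coefficients of $-D_i\psi$ in the rearranged identity. This is harmless but worth stating precisely.
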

\begin{proof}
Take any test function $\psi \in W^{1,2}_{\p\Omega_{R/4}^{+}\setminus\Gamma^{-}}(\Omega_{R/4}^{+})$.
We extend $\psi$ to $\bR^d_+$ by setting $\psi\equiv 0$ on $\bR^d_+\setminus \Omega_{R/4}^+$, and then, we again extend evenly to $\bR^d$.
Denote this extended function by $\cE\psi$, and note that $\chi\cE\psi\in W^{1,2}_{\cD}(\Omega)$.
Testing \eqref{eqn-main-no-lower} with $\chi\cE\psi$ and rearranging terms will give us \eqref{eqn-reflected}.
\end{proof}

We continue the proof of Proposition \ref{prop-decom}. Solve the following equation
\begin{equation}\label{eqn-w}
\begin{cases}
\begin{aligned}
D_{i}(\overline{a_{ij}}D_j\hat{w}) - \lambda \hat{w}&=
D_i((\overline{a_{ij}}-a_{ij})D_j(\chi u)) + D_ig_i^{(1)} + D_i g_i^{(2)}\\
&\quad + g_i^{(3)}D_i\chi + g_i^{(4)}D_i\widetilde{\chi} +g^{(5)}
\end{aligned}
&\text{in }\Omega_{R/4}^{+},\\
\overline{a_{ij}}D_j\hat{w}\cdot n_i = (\overline{a_{ij}}-a_{ij})D_j (\chi u) n_i + g_i^{(1)}n_i +g_i^{(2)}n_i &\text{on }\Gamma^{-},\\
\hat{w}=0 & \text{on }\p\Omega_{R/4}^{+}\setminus\Gamma^{-},
\end{cases}
\end{equation}
for $\hat{w}\in W^{1,2}_{\p\Omega_{R/4}^{+}\setminus\Gamma^{-}}(\Omega_{R/4}^{+})$, where $\overline{a_{ij}}=(a_{ij})_{\Omega_{R/4}}$ are constants. Due to the Lax-Milgram lemma, such $\hat{w}$ exists. For simplicity, we denote
$$\hat{W}:=\abs{D\hat{w}} + \sqrt{\lambda}\abs{\hat{w}}.$$
Testing \eqref{eqn-w} by $\hat{w}$, and using the ellipticity and H\"older's inequality, we have
\begin{align}
&\norm{\hat{W}}^2_{L_2(\Omega^{+}_{R/4})} \leq \norm{(\overline{a_{ij}}-a_{ij})D_j(\chi u)}_{L_2(\Omega_{R/4}^{+})}\norm{D\hat{w}}_{L_2(\Omega^{+}_{R/4})}\label{eqn-hat-w-1}\\
&\quad+ \big\|g_i^{(1)}\big\|_{L_2(\Omega_{R/4}^{+})}\norm{D\hat{w}}_{L_2(\Omega^{+}_{R/4})} + \bignorm{g_i^{(2)}}_{L_2(\Omega_{R/4}^{+})}\norm{D\hat{w}}_{L_2(\Omega^{+}_{r/4})}\label{eqn-hat-w-2}\\
&\quad+ \bignorm{\bI_{\supp\set{D\chi}}g_i^{(3)}}_{L_2(\Omega_{R/4}^{+})}\norm{D_i\chi\hat{w}}_{L_2(\Omega_{R/4}^{+})}
+ \bignorm{g_i^{(4)}}_{L_2(\Omega_{R/4}^{+})}\bignorm{D_i\widetilde{\chi}\hat{w}\bI_{(-x_1,x_2,x'')\in\Omega_{R/4}^{-}}}_{L_2(\Omega_{R/4}^{+})}\label{eqn-hat-w-3}\\
&\quad+ \Norm{\lambda^{-1/2}{g^{(5)}}}_{L_2(\Omega_{R/4}^{+})}\norm{\lambda^{1/2}\hat{w}}_{L_2(\Omega_{R/4}^{+})}\label{eqn-hat-w-4}.
\end{align}
For the term in \eqref{eqn-hat-w-1}, we use Assumption \ref{ass-smallBMO}, H\"older's inequality, Lemma \ref{lem-reverse-holder}, and Lemma \ref{lem-usingpoincare}. Noting that $\abs{\Omega_{R/4}^{+}}$, $\abs{\Omega_{R/4}}$ and $\abs{\Omega_{R}(x_0)}$ are all comparable to $R^d$, we have:
\begin{equation}\label{eqn-est-aij}
\begin{split}
\big(\abs{(\overline{a_{ij}}-a_{ij})D_j(\chi u)}^2\big)^{1/2}_{\Omega_{R/4}^{+}}
&\leq N\big(\abs{\overline{a_{ij}}-a_{ij}}^{2\mu'}\big)^{1/(2\mu')}_{\Omega_{R/4}} \big(\abs{Du}^{2\mu}\big)^{1/(2\mu)}_{\Omega_{R/4}}
+N\big(\abs{D\chi u}^2\big)^{1/2}_{\Omega_{R/4}^{+}}\\
&\leq N\big(\abs{\overline{a_{ij}}-a_{ij}}\big)^{1/(2\mu')}_{\Omega_{R/4}}
\big(\abs{Du}^{2\mu}\big)^{1/(2\mu)}_{\Omega_{R/4}}+N\big(\abs{D\chi u}^2\big)^{1/2}_{\Omega_{R/4}^{+}}\\
&\leq N \big(\theta^{1/(2\mu')}+\gamma^{1/(2\mu')}\big)\big((U^2)^{1/2}_{\Omega_R(x_0)} + (F^{2\mu})^{1/(2\mu)}_{\Omega_R(x_0)}\big),
\end{split}
\end{equation}
where $N=N(d,p,\Lambda)$ is a constant.

For the terms in \eqref{eqn-hat-w-2}, we first estimate $g_i^{(1)}$. This is simply due to Lemma \ref{lem-usingpoincare} and H\"older's inequality:
\begin{equation}\label{eqn-est-g}
\begin{split}
\big(\bigabs{g_i^{(1)}}^2\big)^{1/2}_{\Omega_{R/4}^{+}}
&\leq N(\abs{uD\chi}^2)^{1/2}_{\Omega_{R/4}} + N(\abs{f_i}^2)^{1/2}_{\Omega_{R/4}}\\
&\leq N \gamma^{1/(2\mu')}(U^2)^{1/2}_{\Omega_R(x_0)} + N(F^{2\mu})^{\frac{1}{2\mu}}_{\Omega_R(x_0)}.
\end{split}
\end{equation}
Now we estimate $g_i^{(2)}$ as follows:
\begin{equation}\label{eqn-est-G}
\begin{split}
\big(\bigabs{g_i^{(2)}}^2\big)^{1/2}_{\Omega_{R/4}^{+}}
&\le \big(\bigabs{\widetilde{a_{ij}}\widetilde{\chi}D_j\widetilde{u}
\bI_{(-x_1,x_2,x'')\in \Omega_{R/4}^-}}^2\big)^{1/2}_{\Omega_{R/4}^{+}} +
\big(\bigabs{\widetilde{\chi}\widetilde{f_i}
\bI_{(-x_1,x_2,x'')\in \Omega_{R/4}^-}}^2\big)^{1/2}_{\Omega_{R/4}^{+}}\\
&\leq N\big(\bigabs{\bI_{\Omega_{R/4}^{-}} Du}^2\big)^{1/2}_{\Omega_{R/4}} + N(\abs{f_i}^2)^{1/2}_{\Omega_{R/4}}\\
&\leq N\gamma^{1/(2\mu')}(\abs{Du}^{2\mu})^{1/(2\mu)}_{\Omega_{R/4}} + N(\abs{f_i}^{2\mu})^{1/(2\mu)}_{\Omega_{R/4}}\\
&\leq N\gamma^{1/(2\mu')}\big((U^2)^{1/2}_{\Omega_R(x_0)} + (F^{2\mu})^{1/(2\mu)}_{\Omega_R(x_0)}\big) + N(\abs{f_i}^{2\mu})^{1/(2\mu)}_{\Omega_{R/4}},
\end{split}
\end{equation}
where in the last line, we used Lemma \ref{lem-reverse-holder}.

For the terms in \eqref{eqn-hat-w-3}, we first use the same decomposition technique together with Poincar\'e's inequality as in the proof of Lemma \ref{lem-usingpoincare} (until the step \eqref{eqn-overlap}) to obtain:
\begin{equation*}
\norm{D_i\chi\hat{w}}_{L_2(\Omega_{R/4}^{+})}\leq N \norm{D\hat{w}}_{L_2(\Omega_{R/4}^{+})}.
\end{equation*}
To avoid the problem of increased integrating domain, here we need to modify the decomposition to be $$\bigcup_{z\in\cD_{grid}}\big(\Omega_{\sqrt {d+3}\gamma R}(z)\cap\Omega_{R/4}^{+}\big),$$
and the same proof still applies. Again, with the help of H\"older's inequality and Lemma \ref{lem-reverse-holder}, we can estimate $g_i^{(3)}$ as follows:
\begin{align*}
\bignorm{\bI_{\supp\set{D\chi}}g_i^{(3)}}_{L_2(\Omega_{R/4}^{+})}
&\leq N\bignorm{\bI_{\supp\set{D\chi}}Du}_{L_2(\Omega_{R/4}^{+})} + N \bignorm{\bI_{\supp\set{D\chi}}f_i}_{L_2(\Omega_{R/4}^{+})}\\
&\leq N\gamma^{1/(2\mu')}(\norm{U}_{L_2(\Omega_R(x_0))} + R^{d/(2\mu')}\norm{F}_{L_{2\mu}(\Omega_R(x_0))}).
\end{align*}
Hence,
\begin{equation}\label{eqn-est-h}
\begin{split}
\bignorm{\bI_{\supp\set{D\chi}}&g_i^{(3)}}_{L_2(\Omega_{R/4}^{+})}\norm{D_i\chi\hat{w}}_{L_2(\Omega_{R/4}^{+})} \\
&\leq N\gamma^{1/(2\mu')}(\norm{U}_{L_2(\Omega_R(x_0))} + R^{d/(2\mu')}\norm{F}_{L_{2\mu}(\Omega_R(x_0))})
\norm{D\hat{w}}_{L_2(\Omega_{R/4}^{+})}.
\end{split}
\end{equation}
Using similar techniques as in \eqref{eqn-est-G}, we can deduce that
\begin{equation}\label{eqn-est-H}
\begin{split}
\bignorm{g_i^{(4)}}_{L_2(\Omega_{R/4}^{+})}&\bignorm{D_i\widetilde{\chi}\hat{w}\bI_{(-x_1,x_2,x'')\in\Omega_{R/4}^{-}}}_{L_2(\Omega_{R/4}^{+})}\\
&\leq N\gamma^{1/(2\mu')}(\norm{U}_{L_2(\Omega_R(x_0))} +  R^{d/(2\mu')}\norm{F}_{L_{2\mu}(\Omega_R(x_0))})
\norm{D\hat{w}}_{L_2(\Omega_{R/4}^{+})}.
\end{split}
\end{equation}
We are left to estimate the one last term in \eqref{eqn-hat-w-4}:
\begin{align}\label{eqn-est-non-div}
&\Norm{\lambda^{-1/2}{g^{(5)}}}_{L_2(\Omega_{R/4}^{+})}\nonumber\\
&\leq \Norm{\lambda^{-1/2}\chi f}_{L_2(\Omega_{R/4}^{+})} + \Norm{\lambda^{-1/2}{\widetilde{f}}
\widetilde{\chi}\bI_{(-x_1,x_2,x'')\in\Omega_{R/4}^{-}}}_{L_2(\Omega_{R/4}^{+})} +\bignorm{\lambda^{1/2}\widetilde{\chi}\widetilde{u}\bI_{(-x_1,x_2,x'')
\in\Omega_{R/4}^{-}}}_{L_2(\Omega_{R/4}^{+})}\nonumber\\
&\leq 2\norm{F}_{L_2(\Omega_{R/4})} + N\gamma^{1/(2\mu')}(\norm{U}_{L_2(\Omega_R(x_0))} + \norm{F}_{L_2(\Omega_R(x_0))}),
\end{align}
where for the last term, we applied similar techniques as we did to estimate $g_i^{(2)}$.

Substituting \eqref{eqn-est-aij}-\eqref{eqn-est-non-div} back, we obtain
\begin{equation}\label{eqn-est-w-hat-final}
(\hat{W}^2)^{1/2}_{\Omega_{R/4}^{+}}\leq N\big( (\theta^{\frac{1}{2\mu'}}+ \gamma^{\frac{1}{2\mu'}})(U^2)^{1/2}_{\Omega_R(x_0)} + (F^{2\mu})^{\frac{1}{2\mu}}_{\Omega_R(x_0)}\big).
\end{equation}
Now we define
$$W:=\begin{cases}
\hat{W} + \abs{D((1-\chi)u)} + \sqrt{\lambda}\abs{(1-\chi)u} &\text{in } \Omega_{R/32}(x_0)\cap \bR^d_{+},\\
\abs{Du} + \sqrt{\lambda}\abs{u} &\text{in } \Omega_{R/32}(x_0)\cap \bR^d_{-}.
\end{cases}$$
Using \eqref{eqn-est-w-hat-final}, H\"older's inequality, Lemma \ref{lem-reverse-holder}, and Lemma \ref{lem-usingpoincare}, we can obtain \eqref{eqn-est-W}.

To construct $V$, we set
 $$v:=\chi u-\hat{w}.$$ Clearly $v\in W^{1,2}_{\Gamma^{+}}(\Omega_{R/4}^{+})$. Simple computation using Lemma \ref{lem-RF-as-pert-flat} and \eqref{eqn-w} shows that $v$ satisfies
$$
\begin{cases}
D_{i}(\overline{a_{ij}}D_jv) - \lambda v= 0 &\text{in }\Omega_{R/4}^{+},\\
\overline{a_{ij}}D_jv \cdot n_i = 0 &\text{on }\Gamma^{-},\\
v=0 &\text{on }\Gamma^{+}.
\end{cases}
$$
Now we define
$$
V:=\begin{cases}
\abs{Dv} + \sqrt{\lambda}\abs{v}& \text{in } \Omega_{R/4}^{+},\\
0 & \text{in } \Omega_{R/4}^{-}.
\end{cases}
$$
Then we have
$$
U\le W+V \quad \text{in } \Omega_{R/32}(x_0)
$$
from the fact that
$$
u=v+\hat{w}+(1-\chi)u\quad  \text{in } \Omega_{R/32}(x_0)\cap \bR^d_+, \quad W=U \quad \text{in }\Omega_{R/32}(x_0)\cap \bR^d_-.
$$
Using \eqref{eq11.37},
we can apply a properly rescaled version of Theorem \ref{thm-harmonic-mixed-halfspace} with a change of variables to obtain that for any $q\in[2,4)$, $V\in L_q(\Omega_{R/32}(x_0))$ satisfying
\begin{equation}\label{eqn-est-v}
\begin{split}
(V^q)^{1/q}_{\Omega_{R/32}(x_0)}
\leq&
(V^q)^{1/q}_{\Omega_{R/8}^{+}}
\le N (V^2)^{1/2}_{\Omega_{R/4}^+}\\
\leq& N ((\abs{D(\chi u)}^2)^{1/2}_{\Omega_{R/4}^+} + \sqrt{\lambda}(\abs{\chi u}^2)^{1/2}_{\Omega_{R/4}^+} + \abs{\hat{W}^2}^{1/2}_{\Omega_{R/4}^+})\\
\leq& N (U^2)^{1/2}_{\Omega_R(x_0)} + N\big((\theta^{1/(2\mu')}+\gamma^{1/(2\mu')})(U^2)^{1/2}_{\Omega_R(x_0)} + (F^{2\mu})^{1/(2\mu)}_{\Omega_R(x_0)}\big)\\
\leq& N (U^2)^{1/2}_{\Omega_R(x_0)} + N(F^{2\mu})^{1/(2\mu)}_{\Omega_R(x_0)}.
\end{split}
\end{equation}
Here we used the estimates for $uD\chi$ and $\hat{W}$ in previous steps. Clearly, from \eqref{eqn-est-v} we obtain \eqref{eqn-est-V}. This finishes the proof of Proposition \ref{prop-decom}.
\end{proof}
\subsection{Level Set Argument}
In previous steps, we treat the perturbation problem by decomposing $U$ into two parts, with $L_2$ and $L_q$ estimates respectively. Now we interpolate using a level set argument to obtain the required $L_p$ estimate for Proposition \ref{prop-regularity}. Such argument was suggested by Caffarelli in \cite{Ca} for a ``kernel free'' approach to $W^{1,p}$ estimate of divergence form second-order elliptic equations.
Note that our estimate is not an a priori estimate, i.e., we do not need to assume $Du \in L_p$ in advance.

Define
\begin{align*}
&\cA(s):= \set{x\in\Omega:\cM_{\Omega}(U^2)^{1/2} >s},\\
&\cB(s):= \set{x\in\Omega: (\gamma^{1/(2\mu')}+\theta^{1/(2\mu')})^{-1}
\cM_{\Omega}(F^{2\mu})^{1/(2\mu)} + \cM_{\Omega}(U^2)^{1/2} >s },
\end{align*}
where $\mu,\mu'\in (1,\infty)$ are the constants from Proposition \ref{prop-decom}.
Here we denote $\cM_{\Omega}$ to be the Hardy-Littlewood maximal  operator restricted on $\Omega$, i.e., for $f\in L_{1,\rm{loc}}(\Omega)$ and $x\in\Omega$:
$$
\cM_{\Omega}(f)(x):=\sup_{r>0}\fint_{B_r(x)}|f|
\bI_{\Omega}.
$$
By the Hardy-Littlewood  theorem, for any $f\in L_q(\Omega)$ with $q\in [1,\infty)$, we have
\begin{equation}\label{eqn-weak-est}
\abs{\set{x\in\Omega:\cM_{\Omega}(f)(x)>s}} \leq N\frac{\norm{f}_{L_q(\Omega)}^q}{s^q},
\end{equation}
where $N=N(d,q)$.
	
Proposition \ref{prop-decom} leads to the following lemma.

\begin{lemma}\label{lem-levelset}
Under the same hypothesis of Proposition \ref{prop-decom}, for any $q\in[2,4)$, there exists a constant $N$ depending on $(d,p,q,\Lambda)$, such that for all $\kappa>2^{d/2}$ and $s>0$, the following holds: if for some $R<R_0, x_0\in \overline{\Omega}$,
\begin{equation}\label{eqn-stoptime}
\abs{\Omega_{R/128}(x_0)\cap\cA(\kappa s)} \geq N\big(\kappa^{-q} + \kappa^{-2}(\gamma^{1/\mu'}+\theta^{1/\mu'})\big)\abs{\Omega_{R/128}(x_0)},
\end{equation}
then $\Omega_{R/128}(x_0)\subset \cB(s)$.
\end{lemma}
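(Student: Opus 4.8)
The plan is to argue by contrapositive: assuming $\Omega_{R/128}(x_0)\not\subset\cB(s)$, so that there exists a point $y\in\Omega_{R/128}(x_0)$ with $\cM_\Omega(U^2)^{1/2}(y)\le s$ and $\cM_\Omega(F^{2\mu})^{1/(2\mu)}(y)\le (\gamma^{1/(2\mu')}+\theta^{1/(2\mu')})s$, I will deduce that the density estimate \eqref{eqn-stoptime} must fail, i.e. $|\Omega_{R/128}(x_0)\cap\cA(\kappa s)|$ is strictly smaller than the stated multiple of $|\Omega_{R/128}(x_0)|$. First I would use the existence of $y$ to control averages of $U^2$ and $F^{2\mu}$ over balls: since $B_R(x_0)\subset B_{2R}(y)$, one gets
$$
\Big(\fint_{B_R(x_0)}U^2\bI_\Omega\Big)^{1/2}\le N s,\qquad \Big(\fint_{B_R(x_0)}F^{2\mu}\bI_\Omega\Big)^{1/(2\mu)}\le N(\gamma^{1/(2\mu')}+\theta^{1/(2\mu')})s,
$$
which, after passing to averages over $\Omega_R(x_0)$ (whose measure is comparable to $R^d$ by the Reifenberg flatness), feed directly into the right-hand sides of \eqref{eqn-est-W} and \eqref{eqn-est-V} from Proposition \ref{prop-decom}.

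Next I would invoke Proposition \ref{prop-decom} to write $U\le W+V$ on $\Omega_{R/32}(x_0)$ with
$$
(W^2)^{1/2}_{\Omega_{R/32}(x_0)}\le N(\gamma^{1/(2\mu')}+\theta^{1/(2\mu')})s,\qquad (V^q)^{1/q}_{\Omega_{R/32}(x_0)}\le N s.
$$
Then I would estimate $|\Omega_{R/128}(x_0)\cap\cA(\kappa s)|$ by splitting according to whether $W>\kappa s/2$ or $V>\kappa s/2$ at the given point; since $\cM_\Omega(U^2)^{1/2}>\kappa s$ forces $\cM_{\Omega_{R/32}(x_0)}((W+V)^2)^{1/2}>\kappa s$ at points of $\Omega_{R/128}(x_0)$ — here one must check that the relevant maximal function can be localized to $B_{R/32}(x_0)$ because large balls centered in $\Omega_{R/128}(x_0)$ only see the part of $U$ outside, where the trivial bound $\cM_\Omega(U^2)^{1/2}(y)\le s<\kappa s/2^{d/2}\le\kappa s$ (using $\kappa>2^{d/2}$) already rules out membership in $\cA(\kappa s)$ — I get
$$
\Omega_{R/128}(x_0)\cap\cA(\kappa s)\subset \{\cM_{\Omega_{R/32}(x_0)}(W^2)^{1/2}>\tfrac{\kappa s}{2}\}\cup\{\cM_{\Omega_{R/32}(x_0)}(V^q)^{1/q}>(\tfrac{\kappa s}{2})^{?}\}.
$$
Applying the weak-$(1,1)$ bound \eqref{eqn-weak-est} to $W^2$ with exponent $1$ and to $V^q$ with exponent $1$ (equivalently, the strong maximal inequality in $L^1$ for $W^2$ and $V^q$), and then using the $L_2$ and $L_q$ bounds just displayed, yields
$$
|\Omega_{R/128}(x_0)\cap\cA(\kappa s)|\le N\Big(\frac{(\gamma^{1/\mu'}+\theta^{1/\mu'})s^2}{(\kappa s)^2}+\frac{s^q}{(\kappa s)^q}\Big)|\Omega_{R/32}(x_0)|\le N\big(\kappa^{-2}(\gamma^{1/\mu'}+\theta^{1/\mu'})+\kappa^{-q}\big)|\Omega_{R/128}(x_0)|,
$$
where the last step uses that $|\Omega_{R/32}(x_0)|$ and $|\Omega_{R/128}(x_0)|$ are comparable (again Reifenberg flatness, or simply that both are comparable to $R^d$). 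Choosing the constant $N$ in \eqref{eqn-stoptime} larger than the constant just produced gives the required contradiction.

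The main obstacle, I expect, is the careful localization of the Hardy–Littlewood maximal operator: one needs to ensure that for $x\in\Omega_{R/128}(x_0)$ the condition $\cM_\Omega(U^2)^{1/2}(x)>\kappa s$ can be detected using only averages over balls $B_r(x)$ with $r\lesssim R$, so that the decomposition $U\le W+V$ on $\Omega_{R/32}(x_0)$ can actually be used. This is exactly where the hypothesis $\kappa>2^{d/2}$ enters: for $r$ large, $B_r(x)\supset$ a point comparable to $y$, and the average of $U^2\bI_\Omega$ over such a ball is bounded by $2^d\fint_{B_{2r}(y)}U^2\bI_\Omega\le 2^d s^2<(\kappa s)^2$, so large balls never trigger the level set. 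A secondary technical point is that $V$ is only in $L^q$ for $q<4$ rather than $L^\infty$ (this is the novelty compared with the purely Dirichlet/conormal cases), so the decay rate in $\kappa$ for the $V$-part is only $\kappa^{-q}$; this is harmless here but is the reason the statement is phrased with a general $q\in[2,4)$. Everything else — comparing measures of $\Omega_r(x_0)$ to $r^d$, passing between averages on balls and on $\Omega_r(x_0)$, Young's/Hölder's inequality in combining the two weak-type bounds — is routine.
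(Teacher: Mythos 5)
Your argument is correct and is essentially the paper's own proof: argue by contrapositive, use the exceptional point $y\notin\cB(s)$ to control the averages that feed into Proposition \ref{prop-decom}, observe that membership in $\cA(\kappa s)$ can only be triggered by small balls (this is where $\kappa>2^{d/2}$ enters), and then bound $|\Omega_{R/128}(x_0)\cap\cA(\kappa s)|$ via the weak-type maximal inequality applied to $W^{2}$ and $V^{q}$. The only cosmetic difference is that you center the decomposition at $x_0$ whereas the paper centers it at the exceptional point itself (both choices work, by your observation $B_R(x_0)\subset B_{2R}(y)$); also, the stray ``$(\kappa s/2)^{?}$'' in your displayed union should simply read $\kappa s/2$, i.e.\ the set is $\{\cM(V^q)^{1/q}>\kappa s/2\}$, and the parenthetical about a ``strong maximal inequality in $L^1$'' is a misnomer---it is the weak $(1,1)$ bound that is used.
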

\begin{proof}
Without loss of generality, we assume $s=1$. We also extend $U$ and $F$ to be zero outside $\Omega$. We will prove the contrapositive of the above statement.
	
Suppose there exists a point $z_0$, with $$z_0\in \Omega_{R/128}(x_0), \quad z_0\notin\cB(1),$$ then by the definition of $\cB$, we have
$$
(\gamma^{1/(2\mu')}+\theta^{1/(2\mu')})^{-1}
\cM_{\Omega}(F^{2\mu})^{1/(2\mu)}(z_0) + \cM_{\Omega}(U^2)^{1/2}(z_0) \leq 1.
$$
In particular, for any $r>0$, we have
\begin{equation*}
(\gamma^{1/(2\mu')}+\theta^{1/(2\mu')})^{-1}
(F^{2\mu})^{1/(2\mu)}_{B_r(z_0)} + (U^2)^{1/2}_{B_r(z_0)} \leq 1.
\end{equation*}
Using Proposition \ref{prop-decom} with $z_0$ in place of $x_0$, we can find $W,V$ defined on $\Omega_{R/32}(z_0)$, such that for any $q\in [2,4)$,
\begin{equation}\label{eqn-WV-bound}
\begin{split}
&U\leq V+W \quad \text{in } \Omega_{R/32}(z_0),\\
&(W^2)^{1/2}_{\Omega_{R/32}(z_0)}\leq N(\gamma^{1/(2\mu')}+\theta^{1/(2\mu')}),\quad (V^q)^{1/q}_{\Omega_{R/32}(z_0)}\leq N.
\end{split}
\end{equation}
Notice that we have the following inclusion
\begin{equation}\label{eqn-inclusion}
{\Omega_{R/128}(x_0)\subset\Omega_{R/64}(z_0)\subset\Omega_{R/32}(z_0)}.
\end{equation}
Now for any $y_0\in \Omega_{R/128}(x_0)\cap\cA(\kappa)$, by the definition of $\cA$, we can find some $r>0$ such that
\begin{equation*}
\Bigg(\fint_{B_r(y_0)}U^2\,dx\Bigg)^{1/2}>\kappa.
\end{equation*}
We claim that $r<R/64$. Otherwise noting $y_0\in \Omega_{R/64}(z_0)$, we have $\Omega_r(y_0)\subset\Omega_{2r}(z_0)$.
Hence we can deduce that
\begin{align*}
\Bigg(\fint_{B_r(y_0)}U^2\,dx\Bigg)^{1/2}\leq 2^{d/2}\Bigg(\fint_{B_{2r}(z_0)}U^2\,dx\Bigg)^{1/2}\leq 2^{d/2} \cM_{\Omega}(U^2)^{1/2}(z_0)\leq 2^{d/2}< \kappa,
\end{align*}
which is a contradiction.

Now, since $r<R/64$,  the decomposition $U\leq W+V$ is defined in $\Omega_r(y_0)\subset\Omega_{R/32}(z_0)$. Extending $W$ and $V$ to be zero outside $\Omega$, we have
\begin{align*}
\Bigg(\fint_{B_r(y_0)}U^2\,dx\Bigg)^{1/2}
&\leq \Bigg(\fint_{B_r(y_0)}W^2\,dx\Bigg)^{1/2}+\Bigg(\fint_{B_r(y_0)}V^2\,dx\Bigg)^{1/2}\\
&\leq \cM_{\Omega}(W^2 \bI_{\Omega_{R/32}(z_0)})^{1/2}(y_0)+ \cM_{\Omega}(V^2 \bI_{\Omega_{R/32}(z_0)})^{1/2}(y_0).
\end{align*}
Then by \eqref{eqn-weak-est}, \eqref{eqn-WV-bound} and \eqref{eqn-inclusion}, we obtain
\begin{align*}
\abs{\Omega_{R/128}(x_0)\cap\cA(\kappa)}
\leq& \abs{\Omega_{R/32}(z_0)\cap\cA(\kappa)}\\
\leq&
\bigabs{\set{\cM_{\Omega}(W^2 \bI_{\Omega_{R/32}(z_0)})^{1/2}>\kappa/2}} + \bigabs{\set{\cM_{\Omega}(V^2 \bI_{\Omega_{R/32}(z_0)})^{1/2}>\kappa/2}}\\
\leq& N\frac{\norm{W}^2_{L_2(\Omega_{R/32}(z_0))}}{(\kappa/2)^2} + N\frac{\norm{V}^q_{L_q(\Omega_{R/32}(z_0))}}{(\kappa/2)^q}\\
\leq& N\abs{\Omega_{R/32}(z_0)}\big(\kappa^{-2}(\gamma^{1/\mu'}+\theta^{1/\mu'})+\kappa^{-q}\big)\\
\leq& N \big(\kappa^{-2}(\gamma^{1/\mu'}+\theta^{1/\mu'})+\kappa^{-q}\big)\abs{\Omega_{R/128}(x_0)}.
\end{align*}
Here $N=N(d,p ,q,\Lambda)$ is exactly what we aim to find.
\end{proof}

Using a lemma in measure theory called ``crawling of the ink spot'' which was first introduced by Krylov and Safonov in \cite{KS,S}, we obtain the following decay estimate from Lemma \ref{lem-levelset}.
\begin{corollary}[Decay of $\cA(s)$]\label{cor-decay}
Under the same hypothesis of Proposition \ref{prop-decom}, for any $q\in [2,4)$, there exists a constant $N$ depending on $(d,p,q,\Lambda)$, such that for any $\kappa>\max\set{2^{d/2},\kappa_0}$ and
\begin{equation}\label{eqn-s0}
s>s_0(d,p,q,\Lambda,\kappa,R_0,\norm{U}_{L_2(\Omega)}):=\Bigg(\frac{\norm{U}_{L_2(\Omega)}^2}{N\kappa^2(\kappa^{-q} + \kappa^{-2}(\gamma^{1/\mu'}+\theta^{1/\mu'}))|B_{R_0/128}|}\Bigg)^{1/2},
\end{equation}
we have
$$
\abs{\cA(\kappa s)} \leq N\big(\kappa^{-q} + \kappa^{-2}(\gamma^{1/\mu'}+\theta^{1/\mu'})\big)\abs{\cB(s)},
$$
where $\kappa_0$ is the constant satisfying
\begin{equation}\label{eqn-kappa}
N\big(\kappa_0^{-q} + \kappa_0^{-2}(\gamma^{1/\mu'}+\theta^{1/\mu'})\big) <1/3.
\end{equation}
\end{corollary}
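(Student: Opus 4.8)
The plan is to feed the local statement of Lemma \ref{lem-levelset} into the measure-theoretic ``crawling of ink-spots'' lemma of Krylov and Safonov (\cite{KS,S}; for the version adapted to general, possibly unbounded domains I would use the one employed in \cite{DK11,DK12}). In the form we need, that lemma reads: given $\epsilon\in(0,1)$, a radius $r_0>0$, and measurable sets $\cC\subseteq\cD\subseteq\Omega$ with $\abs{\cC}<\epsilon\abs{B_{r_0}}$ such that
$$\abs{\cC\cap\Omega_r(x)}\ge\epsilon\abs{\Omega_r(x)}\ \Longrightarrow\ \Omega_r(x)\subseteq\cD\qquad\text{for all }x\in\overline\Omega,\ r\in(0,r_0],$$
one has $\abs{\cC}\le C(d)\,\epsilon\,\abs{\cD}$. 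I would apply this with $\cC=\cA(\kappa s)$, $\cD=\cB(s)$, $r_0=R_0/128$, and $\epsilon:=N\big(\kappa^{-q}+\kappa^{-2}(\gamma^{1/\mu'}+\theta^{1/\mu'})\big)$, where $N$ is the constant produced by Lemma \ref{lem-levelset}.

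First I would check the structural hypotheses. Since $\kappa>1$ and the term $(\gamma^{1/(2\mu')}+\theta^{1/(2\mu')})^{-1}\cM_\Omega(F^{2\mu})^{1/(2\mu)}$ is nonnegative, every point with $\cM_\Omega(U^2)^{1/2}>\kappa s$ lies in $\cB(s)$, so $\cA(\kappa s)\subseteq\cB(s)$. By $\kappa>\kappa_0$ and \eqref{eqn-kappa} (together with the monotonicity of $t\mapsto t^{-q}$ and $t\mapsto t^{-2}$) we get $\epsilon<1/3<1$, so $\epsilon$ is admissible. The implication in the display above is, after rescaling $R=128r$, exactly the contrapositive of Lemma \ref{lem-levelset}: if $\Omega_{R/128}(x_0)\not\subseteq\cB(s)$ then $\abs{\Omega_{R/128}(x_0)\cap\cA(\kappa s)}<\epsilon\abs{\Omega_{R/128}(x_0)}$; letting $R$ range over $(0,R_0)$ covers all radii $r\in(0,R_0/128)$, and the endpoint $r=R_0/128$ is handled by a limiting argument. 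Finally, the total-smallness condition $\abs{\cA(\kappa s)}<\epsilon\abs{B_{R_0/128}}$ is where the restriction $s>s_0$ enters: applying \eqref{eqn-weak-est} to $U^2\in L_1(\Omega)$ gives
$$\abs{\cA(\kappa s)}=\bigabs{\set{x\in\Omega:\cM_\Omega(U^2)(x)>(\kappa s)^2}}\le N\,\frac{\norm{U}_{L_2(\Omega)}^2}{(\kappa s)^2},$$
and by choosing the (so far unspecified) constant $N$ in the definition \eqref{eqn-s0} of $s_0$ small enough, for every $s>s_0$ the right-hand side is $<\epsilon\abs{B_{R_0/128}}$. The ink-spots lemma then yields $\abs{\cA(\kappa s)}\le C(d)\,\epsilon\,\abs{\cB(s)}$, and absorbing $C(d)$ into $N$ gives the stated decay. (Since $U\in L_2(\Omega)$ and $F\in L_{2\mu}(\Omega)$, both $\cM_\Omega(U^2)^{1/2}$ and $\cM_\Omega(F^{2\mu})^{1/(2\mu)}$ lie in weak Lebesgue spaces, so $\abs{\cB(s)}<\infty$ and the inequality is non-vacuous.)

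The substantive analysis is already contained in Lemma \ref{lem-levelset}; the present step is essentially bookkeeping — identifying $\epsilon$ across the two lemmas, using $\kappa>\kappa_0$ to secure $\epsilon<1$, and reading off the threshold $s_0$ from the total-measure requirement. The one point I would be careful about is that the classical ink-spots lemma is stated for a fixed ball (or all of $\bR^d$), while $\Omega$ here may be unbounded; since every quantity is built from the restricted maximal operator $\cM_\Omega$ and from functions extended by zero outside $\Omega$, and since Lemma \ref{lem-levelset} is already cast in the localized ``density $\Rightarrow$ inclusion'' form the covering argument requires, invoking the domain version of the lemma (as in \cite{DK11,DK12}) causes no difficulty.
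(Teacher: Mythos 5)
Your proposal is correct and follows essentially the same route as the paper: the paper sketches a stopping-time argument (shrink $\Omega_{R/128}(x_0)$ from $R=R_0$ until \eqref{eqn-stoptime} first holds, then use Vitali), which is precisely the covering argument that the abstract ``crawling of ink-spots'' lemma encapsulates, so invoking that lemma with $\cC=\cA(\kappa s)$, $\cD=\cB(s)$, $r_0=R_0/128$, $\epsilon=N(\kappa^{-q}+\kappa^{-2}(\gamma^{1/\mu'}+\theta^{1/\mu'}))$ is the same proof stated at one higher level of abstraction. Your checks of the hypotheses (the inclusion $\cA(\kappa s)\subseteq\cB(s)$ via $\kappa>1$, the bound $\epsilon<1$ via \eqref{eqn-kappa}, the density implication as the contrapositive of Lemma \ref{lem-levelset}, and the total-smallness $|\cA(\kappa s)|<\epsilon|B_{R_0/128}|$ from \eqref{eqn-weak-est} and the choice of $s_0$) match what the paper's sketch uses.
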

Here we only sketch the proof. The key idea is to use a stopping time argument (or the Calder\'on-Zygmund decomposition as in \cite{Ca}). Different from Krylov and Safonov's original version, we cover $\Omega$ by balls instead of dyadic cubes. For any $x_0\in\cA(\kappa s)$, by \eqref{eqn-weak-est}, \eqref{eqn-s0}, and \eqref{eqn-kappa}, we see that \eqref{eqn-stoptime} does not hold with $R_0$ in place of $R$. We shrink the ``ball'' $\Omega_{R/128}(x_0)$ from $R=R_0$ until the first time \eqref{eqn-stoptime} holds. Due to \eqref{eqn-s0}, \eqref{eqn-kappa} and the Lebesgue differentiation theorem, such $R$ exists and $R\in (0,R_0)$. We are left to use the Vitali covering lemma to pick a ``almost disjoint'' cover.

\subsection{Proof of Proposition \ref{prop-regularity} and Corollary \ref{cor-no-u-rhs}}
Now we are ready to give the proof of Proposition \ref{prop-regularity}.
\begin{proof}[Proof of Proposition \ref{prop-regularity}]
Let us fix $p\in (2,4)$, and let $\gamma$, $\theta$, and $\kappa$ be positive constants to be chosen later, such that
$$
\gamma<1/(32\sqrt{d+3}), \quad \theta<1, \quad \kappa >\max\{2^{d/2},\kappa_0\},
$$
where $\kappa_0=\kappa_0(d,p,\Lambda)$ is a constant satisfying \eqref{eqn-kappa} with $q=(p+4)/2$.
It suffices to prove
\begin{equation}\label{eqn-rewrite-Lp}
\lim_{S\rightarrow \infty} \int_0^Sp\abs{\cA(s)}s^{p-1}\,ds \leq N\big(R_0^{d(1-p/2)}\norm{U}_{L_2(\Omega)}^p + \norm{F}_{L_p(\Omega)}^p\big)
\end{equation}
under Assumptions \ref{ass-RF} $(\gamma)$ and \ref{ass-smallBMO} $(\theta)$.
The left-hand side becomes
$$
\lim_{S\rightarrow \infty} \int_0^{S/\kappa}p\kappa^p\abs{\cA(\kappa s)}s^{p-1}\,ds.
$$
We bound the integrand by using Corollary \ref{cor-decay} with $q=(p+4)/2$ when $s>s_0$. For $s\leq s_0$, we apply Chebyshev's inequality.
Then we have
\begin{align*}
&\int_0^{S/\kappa} \abs{\cA(\kappa s)}\kappa^ps^{p-1}\,ds \\
&\leq N\int_0^{s_0}\frac{\norm{U}^2_{L_2(\Omega)}}{(\kappa s)^2}\kappa^ps^{p-1}\,ds + N\big(\kappa^{-q} + \kappa^{-2}(\gamma^{\frac 1 {\mu'}}+\theta^{\frac 1 {\mu'}})\big) \int_0^{S/\kappa}\abs{\cB(s)}\kappa^p s^{p-1}\,ds\\
&\leq N_{0}R_0^{d(1-p/2)}\norm{U}_{L_2(\Omega)}^{p}+ N_{1}\big(\kappa^{-q} + \kappa^{-2}(\gamma^{\frac 1 {\mu'}}+\theta^{\frac 1 {\mu'}})\big)\kappa^p \int_0^{S/\kappa}\abs{\cA(s/2)} s^{p-1}\,ds+ N\norm{F}_{L_p(\Omega)}^p,
\end{align*}
where $N_1=N_1(d,p,\Lambda)$ and $N_0$ depends also on $\kappa$.
Here in the last line, we used the following relationship:
\begin{equation*}
\cB(s)\subset\cA(s/2)\cup\set{(\gamma^{1/(2\mu')}+\theta^{1/(2\mu')})^{-1}
\cM_{\Omega}(F^{2\mu})^{1/(2\mu)} >s/2}
\end{equation*}
and the Hardy-Littlewood inequality, noting that $2\mu<p$.
Now we choose $\kappa$ sufficient large such that $N_{1}\kappa^{p-q}<2^{-p-2}$, and then $\theta$ and $\gamma$ sufficient small such that $N_{1}(\kappa^{p-2}(\gamma^{1/\mu'}+\theta^{1/\mu'}))<2^{-p-2}$. Then we have
\begin{equation*}
\int_0^Sp\abs{\cA(s)}s^{p-1}\,ds
\leq  NR_0^{d(1-p/2)}\norm{U}_{L_2(\Omega)}^{p} + N\norm{F}_{L_p(\Omega)}^p+ \frac{p}{2} \int_0^{S/(2\kappa)}\abs{\cA(s)}s^{p-1}\,ds,
\end{equation*}
where $N=N(d,p,\Lambda)$.
This yields \eqref{eqn-rewrite-Lp}. Hence, we have $u \in W^{1,p}(\Omega)$ satisfying \eqref{est-no-lower-order}. Note that all the previous proof including the reverse H\"older inequality, the estimate for harmonic functions, the decomposition lemma, and the level set argument, also work when $\lambda=0$ if we substitute $U$ by $|Du|$ and $F$ by $\sum_i|f_i|$. Thus we can also obtain \eqref{eqn-no-nondiv} when $\lambda=0$ and $f=0$.
\end{proof}

To end this section, we give the proof of Corollary \ref{cor-no-u-rhs}.

\begin{proof}[Proof of Corollary \ref{cor-no-u-rhs}]
Consider the usual smooth cut-off function $\zeta\in C^\infty_c(B_{\epsi})$ with $\zeta \in [0,1], \abs{D\zeta}\le N/\epsi$. From \eqref{eqn-main-large-lambda}, we can obtain the equation for $\zeta u$:
\begin{equation*}
\begin{cases}
D_i(a_{ij}D_j(u\zeta))-\lambda (u\zeta) = D_i(f_i\zeta + h_i)+ (f\zeta + h) & \text{in }\, \Omega,\\
a_{ij}D_j (u\zeta)  n_i =(f_i\zeta + h_i)  n_i &\text{on }\, \cN,\\
u\zeta=0 & \text{on }\, \cD,
\end{cases}
\end{equation*}
where $h_i$ and $h$ are given as follows:
\begin{equation*}
\begin{split}
h_i&:= a_{ij}uD_j\zeta - b_i u\zeta,\\
h&:=a_{ij}D_juD_i\zeta + b_iuD_i\zeta - \hat{b}_iD_iu\zeta-cu\zeta-f_iD_i \zeta.
\end{split}
\end{equation*}
Hence by \eqref{est-no-lower-order}, for any $\lambda>0$, we have
\begin{equation}		\label{190112@eq3}
\begin{aligned}
&\norm{D(u\zeta)}_{L_p(\Omega)}+\sqrt{\lambda}\norm{u\zeta}_{L_p(\Omega)}\\
&\leq N_0 R_0^{d(1/p-1/2)}\big(\norm{D(u\zeta)}_{L_2(\Omega)} + \sqrt{\lambda}\norm{u\zeta}_{L_2(\Omega)}\big)\\
&\quad+N_0 \|f_i \zeta\|_{L_p(\Omega)}+\frac{N_0}{\sqrt{\lambda}}\big(\|f_i D\zeta\|_{L_p(\Omega)}+\|f\zeta\|_{L_p(\Omega)}\big)\\
& \quad +N_1\big(\norm{uD\zeta}_{L_p(\Omega)}+\norm{u\zeta}_{L_p(\Omega)}\big)\\
&\quad+\frac{N_1}{\sqrt{\lambda}}\big(\norm{Du\cdot D\zeta}_{L_p(\Omega)}+\norm{uD\zeta}_{L_p(\Omega)}+\norm{Du\zeta}_{L_p(\Omega)}+\|u\zeta\|_{L_p(\Omega)}\big),
\end{aligned}
\end{equation}
where $N_0=N_0(d,p, \Lambda)$ and $N_1$ depends also on $K$.
Using H\"older's inequality, we obtain
\begin{equation*}
\norm{D(u\zeta)}_{L_2(\Omega)} + \sqrt{\lambda}\norm{u\zeta}_{L_2(\Omega)} \leq \epsi^{d/2-d/p}\big(\norm{D(u\zeta)}_{L_p(\Omega)} + \sqrt{\lambda}\norm{u\zeta}_{L_p(\Omega)}\big).
\end{equation*}
Thus by taking $\varepsilon=\varepsilon(d,p,\Lambda, R_0)>0$ sufficiently small  such that $N_0(\varepsilon/R_0)^{d/2-d/p}<1/2$, we can absorb the first two terms on the right-hand side of \eqref{190112@eq3} to the left-hand side.
Then by using the standard partition of unity technique and choosing $\lambda$ large enough, we conclude \eqref{181222@eq1}.
The corollary is proved.
\end{proof}

\section{Solvability and General $p$}
With the regularity result in hand, we are now going to prove Theorem \ref{thm-well-posedness} concerning the solvability. Note that in this section, we deal with more general cases $p\in(4/3,4)$. We first state the following $L_2$ well-posedness result, which is a direct consequence of the Lax-Milgram lemma.

\begin{lemma}\label{lem-L2-solvability}
Let $\Omega$ be a domain with $\p\Omega=\cD\cup\cN$. Consider the equation \eqref{eqn-main-large-lambda} with $b_i,\hat{b}_i,c\in L_\infty(\Omega)$. Then for any
$$f,f_i\in L_2(\Omega),\quad \lambda > \lambda_2:= 4\Big(\norm{b_i}^2_{L_\infty(\Omega)}/\Lambda
+\norm{\hat{b}_i}^2_{L_\infty(\Omega)}/\Lambda+\norm{c}_{L_\infty(\Omega)}\Big),$$
there exists a unique $W^{1,2}_{\cD}(\Omega)$ weak solution $u$ to \eqref{eqn-main-large-lambda}, satisfying
\begin{equation*}
\norm{U}_{L_2(\Omega)}\leq N \norm{F}_{L_2(\Omega)},
\end{equation*}
where $N=N(\Lambda)$ is a constant.
\end{lemma}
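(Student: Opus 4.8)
The plan is to recast \eqref{eqn-main-large-lambda} as a coercive variational problem on the Hilbert space $H:=W^{1,2}_\cD(\Omega)$ (a closed subspace of $W^{1,2}(\Omega)$) and invoke the Lax--Milgram lemma. The role of the $-\lambda u$ term is that it supplies coercivity directly, with no appeal to a Poincar\'e-type inequality, which is exactly what makes the argument work for unbounded $\Omega$ and for small $\cD$. Note that neither regularity nor symmetry of $\vec A$ is needed here.

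First I would introduce the bilinear form $\mathcal B_\lambda:H\times H\to\bR$,
$$
\mathcal B_\lambda[u,\phi]=\int_\Omega a_{ij}D_ju\,D_i\phi+b_iu\,D_i\phi-\hat b_i(D_iu)\phi-cu\phi+\lambda u\phi\,dx,
$$
and the functional $\ell(\phi)=-\int_\Omega f\phi\,dx+\int_\Omega f_iD_i\phi\,dx$, so that, by Definition \ref{def-weak-sln} applied to the operator $L-\lambda$ (with $p/(p-1)=2$, so the solution and test spaces coincide), a function $u\in H$ is a weak solution of \eqref{eqn-main-large-lambda} if and only if $\mathcal B_\lambda[u,\phi]=\ell(\phi)$ for all $\phi\in H$. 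Boundedness of $\mathcal B_\lambda$ on $H\times H$ and of $\ell$ on $H$ is immediate from $|\vec A|\le\Lambda^{-1}$, $|\vec b|+|\hat{\vec b}|+|c|\le K$, $f,f_i\in L_2(\Omega)$, and the Cauchy--Schwarz inequality.

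The only real computation is coercivity. Using ellipticity on the leading term, Young's inequality on $\int b_iu\,D_iu$ and $\int\hat b_i(D_iu)u$ with the weight tuned so that the gradient parts are absorbed into half of $\Lambda\|Du\|_{L_2}^2$, and $|\int cu^2|\le\|c\|_{L_\infty}\|u\|_{L_2}^2$, one obtains
$$
\mathcal B_\lambda[u,u]\ge\frac{\Lambda}{2}\|Du\|_{L_2(\Omega)}^2+\Big(\lambda-\frac{\|\vec b\|_{L_\infty}^2+\|\hat{\vec b}\|_{L_\infty}^2}{\Lambda}-\|c\|_{L_\infty}\Big)\|u\|_{L_2(\Omega)}^2.
$$
Since $\lambda>\lambda_2$, the coefficient of $\|u\|_{L_2}^2$ exceeds $\tfrac34\lambda$, hence $\mathcal B_\lambda[u,u]\ge\tfrac\Lambda2\|Du\|_{L_2}^2+\tfrac{3\lambda}{4}\|u\|_{L_2}^2$, which both controls $\|u\|_{W^{1,2}(\Omega)}^2$ (for the fixed $\lambda$) and, via $U^2\le 2(|Du|^2+\lambda u^2)$, gives $\mathcal B_\lambda[u,u]\ge c_\Lambda\|U\|_{L_2(\Omega)}^2$ with $c_\Lambda=\tfrac14\min\{\Lambda,3/2\}$. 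Lax--Milgram then produces a unique $u\in H$ with $\mathcal B_\lambda[u,\cdot]=\ell$, which is exactly the asserted (unique) weak solution.

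For the a priori bound, test with $\phi=u$: bounding $|\ell(u)|\le\|f\|_{L_2}\|u\|_{L_2}+\||\vec f|\|_{L_2}\|Du\|_{L_2}\le 2\|F\|_{L_2(\Omega)}\|U\|_{L_2(\Omega)}$ (writing $\|f\|_{L_2}\|u\|_{L_2}=(\lambda^{-1/2}\|f\|_{L_2})(\lambda^{1/2}\|u\|_{L_2})$ and noting each factor is dominated by $\|F\|_{L_2}$, resp.\ $\|U\|_{L_2}$), and combining with the coercivity lower bound gives $c_\Lambda\|U\|_{L_2(\Omega)}^2\le 2\|F\|_{L_2(\Omega)}\|U\|_{L_2(\Omega)}$, i.e.\ $\|U\|_{L_2(\Omega)}\le N(\Lambda)\|F\|_{L_2(\Omega)}$ with $N(\Lambda)=2/c_\Lambda$. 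I do not expect any serious obstacle; the only thing to watch is the bookkeeping of the Young's-inequality constants, so that the threshold $\lambda_2$ as written indeed leaves a fixed fraction of $\lambda$ after the gradient absorption and so that the final constant depends on $\Lambda$ alone.
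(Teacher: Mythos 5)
Your proof is correct and is exactly the argument the paper intends: the paper states this lemma without proof, remarking only that it is ``a direct consequence of the Lax--Milgram lemma,'' and your write-up supplies precisely that Lax--Milgram argument (bilinear form matching Definition~\ref{def-weak-sln}, coercivity from ellipticity plus Young's inequality with the $\lambda u^2$ term absorbing the lower-order contributions under $\lambda>\lambda_2$, and the energy estimate by testing with $u$). The bookkeeping of constants is right: $\lambda-\lambda_2/4>3\lambda/4$, $U^2\le 2(|Du|^2+\lambda u^2)$, and $|\ell(u)|\le 2\|F\|_{L_2}\|U\|_{L_2}$, yielding $N=N(\Lambda)$ as claimed.
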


\begin{proof}[Proof of Theorem \ref{thm-well-posedness}]
We prove by three cases under Assumptions \ref{ass-RF} $(\gamma_0)$ and \ref{ass-smallBMO} $(\theta_0)$, where $\gamma_0, \theta_0$ are the constants from Proposition \ref{prop-regularity}.
Assume that  $\lambda>\lambda_0$, where $\lambda_0$ is a constant to be chosen below, which satisfies
$$
\lambda_0\ge\max\{\lambda_1, \lambda_2\}.
$$
Here, $\lambda_1$ and $\lambda_2$ are the constants from Corollary \ref{cor-no-u-rhs} and Lemma \ref{lem-L2-solvability}, respectively.

{\em Case 1}: $p=2$. This is Lemma \ref{lem-L2-solvability}.

{\em Case 2}: $p\in(2,4)$.
Due to the method of continuity and the a priori estimate (see Corollary \ref{cor-no-u-rhs}), it suffices to prove the theorem when all the lower order coefficients are zero, i.e., $b_i\equiv \hat{b}_i\equiv c\equiv 0$.

Now we approximate $f,f_i$ by $f^{(n)}, f^{(n)}_i$ strongly in $L_p(\Omega)$, where $f^{(n)}, f^{(n)}_i\in L_2(\Omega)\cap L_p(\Omega)$.
Then by Lemma \ref{lem-L2-solvability}, there exist $W^{1,2}_{\cD}(\Omega)$ weak solutions $u^{(n)}$ to \eqref{eqn-main-large-lambda} (without lower order terms) with $f^{(n)}, f_i^{(n)}$ in place of $f, f_i$.
Moreover, it follows from Proposition \ref{prop-regularity} and Corollary \ref{cor-no-u-rhs} that $\{u^{(n)}\}$ is a Cauchy sequence in $W^{1,p}_{\cD}(\Omega)$.
Denote its limit by $u\in W^{1,p}_{\cD}(\Omega)$. Clearly $u$ is a weak solution, and \eqref{eqn-general} is satisfied.

{\em Case 3}: $p\in(4/3,2)$.
We first use a duality argument to prove the a priori estimate, i.e. assuming $u\in W^{1,p}_{\cD}(\Omega)$ is a solution to \eqref{eqn-main-large-lambda} with $f,f_i\in L_p(\Omega)$, we are to prove the estimate \eqref{eqn-general}. For simplicity, we consider the following equivalent norm for the space $L_p(\Omega)\times (L_p(\Omega))^d$ with $\lambda>0$:
$$\norm{(f,(f_i)_{i=1}^d)}_{p,\lambda}:= \lambda^{-1/2}\norm{f}_{L_p(\Omega)} + \sum_{i=1}^d \norm{f_i}_{L_p(\Omega)},$$
and its dual space $L_{p'}(\Omega)\times (L_{p'}(\Omega))^d$:
$$\norm{(f,(f_i)_{i=1}^d)}_{p',1/\lambda}:= \lambda^{1/2}\norm{f}_{L_{p'}(\Omega)} + \sum_{i=1}^d \norm{f_i}_{L_{p'}(\Omega)},$$
where $p'\in(2,4)$ satisfying $1/p+1/p'=1$.

By duality, to prove \eqref{eqn-general}, it suffices to prove
\begin{equation*}
\sup_{\substack{\varphi_i,\varphi \in C^\infty_c(\Omega)\\
\norm{(\varphi,(\varphi_i)_{i=1}^d)}_{p',1/\lambda}=1}}\Abs{\int_\Omega (D_i u \varphi_i + \lambda u \varphi )\, dx} \leq N\left(\norm{f_i}_{L_p(\Omega)}+ \lambda^{-1/2}\norm{f}_{L_p(\Omega)}\right).
\end{equation*}
For this, we solve for $v\in W^{1,p'}_{\cD}(\Omega)$ to the following adjoint problem:
\begin{equation}\label{eqn-dual}
\begin{cases}
D_i(a_{ji}D_j v - \hat{b}_i v) - b_i D_i v + cv-\lambda v=\lambda \varphi - D_i \varphi_i  & \text{in }\, \Omega,\\
a_{ji}D_j v n_i - \hat{b}_i v n_i=-\varphi_i \cdot n_i & \text{on }\, \cN,\\
v=0 & \text{on }\, \cD.
\end{cases}
\end{equation}
Noting that $u$ is a test function for \eqref{eqn-dual}, and $v$ is a test function for \eqref{eqn-main-large-lambda}, we have
\begin{align*}
\Abs{\int_\Omega (D_i u \varphi_i + \lambda u \varphi)\,dx} &= \Abs{\int_\Omega (-a_{ji}D_j v D_iu + \hat{b}_iv D_iu - b_iD_ivu+cvu -\lambda vu)\,dx}\\
&=\Abs{\int_\Omega (-f_iD_iv + fv)\,dx}\\
&\leq \big(\|f_i\|_{L_p(\Omega)}+\lambda^{-1/2}\|f\|_{L_p(\Omega)}\big)\big(\|Dv\|_{L_{p'}(\Omega)}+\lambda^{1/2}\|v\|_{L_{p'}(\Omega)}\big)\nonumber
\\
&= N\big(\|f_i\|_{L_p(\Omega)}+\lambda^{-1/2} \|f\|_{L_p(\Omega)}\big).
\end{align*}
Here, we use the following $W^{1,p'}$ estimate for the equation \eqref{eqn-dual}:
\begin{equation*}
\norm{D_iv}_{L_{p'}(\Omega)} + \sqrt{\lambda}\norm{v}_{L_{p'}(\Omega)} \leq N\big(\norm{\varphi_i}_{L_{p'}(\Omega)}
+\lambda^{-1/2}\norm{\lambda\varphi}_{L_{p'}(\Omega)}\big)=N.
\end{equation*}
This gives us the $W^{1,p}$-a priori estimate, i.e., \eqref{eqn-general} when $p\in(4/3,4)$.

To see the solvability, we approximate $f,f_i$ by
$$f^{(n)},f^{(n)}_i \in C^\infty_c (\subset L_p), \quad f^{(n)}\rightarrow f, \ f^{(n)}_i\rightarrow f_i \quad \text{in } L_p.$$
Let $u^{(n)}$ be the unique $W^{1,2}_\cD$ weak solution associated with $f^{(n)}$ and $f^{(n)}_i$. Due to the $W^{1,p}$-a priori estimate that we just obtained, and the same argument as in case 2, it is enough to show $u^{(n)}\in W^{1,p}_\cD(\Omega)$. Due to H\"older's inequality, this can be further reduced to showing the following:
\begin{equation}\label{eqn-holder-decay}
\sum_k\norm{u}_{W^{1,2}(\Omega_{k+1}\setminus\Omega_{k})}\cdot k^{(d-1)(1/p-1/2)}<\infty.
\end{equation}
Here, we denoted $\Omega_k:=\Omega_k(0)$. For this, we use a classical ``hole-filling'' technique. Take $\eta\in C^\infty_c(B_k^c)$, $\eta =1$ in $B_{k+1}^c$, $\abs{D\eta}\leq 2$. Testing the equation by $u\eta^2$ and rearranging terms, we obtain that there exits some $\lambda_0=\lambda_0(d,p,\Lambda,R_0,\norm{b_i}_\infty,\norm{\hat{b_i}}_\infty,\norm{c}_\infty)$, such that for $\lambda>\lambda_0$,
\begin{equation*}
\int_{\Omega_{k+1}^c} (\abs{Du}^2 + \lambda\abs{u}^2)\,dx \leq N \int_{\Omega_{k+1}\setminus\Omega_{k}} \abs{u}^2\,dx.
\end{equation*}
Clearly, this leads to
\begin{equation*}
\norm{Du}^2_{L_2(\Omega_{k+1}^c)} + \lambda\norm{u}^2_{L_2(\Omega_{k+1}^c)} \leq \frac{N}{N+\lambda} (\norm{Du}^2_{L_2(\Omega_{k}^c)} + \sqrt{\lambda}\norm{u}^2_{L_2(\Omega_{k}^c)}).
\end{equation*}
Hence, $\norm{u}_{W^{1,2}(\Omega_{k+1}\setminus\Omega_{k})}$ decays exponentially and in particular, \eqref{eqn-holder-decay} holds. This finishes our proof.
\end{proof}
\section{Bounded Domain Case}
In this section, we deal with the bounded domain case, i.e., Theorem \ref{thm-bounded-domain}. First, we reduce the problem to the case $f=0$ by solving a divergence equation. This reduction has also been used in \cite{CDK18}. Note that in the following, we use a key fact that a Reifenberg flat domain is also a so-called John domain, which can be found in \cite[Remark~3.3]{DK18-JDE}.

Let us first recall the definition of John domains.
\begin{definition}[John domain, \cite{ADM}]\label{def-John}
A bounded set $\Omega\subset\bR^d$ is a John domain, if there exist $x_0\in \Omega$ and $\lambda>0$ such that for every $x\in\Omega$ there exists a continuous rectifiable curve $\gamma: [0,1]\mapsto \Omega$, such that $\gamma(0)=x,\gamma(1)=x_0$, and
\begin{equation}\label{eqn-John}
\dist(\gamma(t),\Omega^c) \geq \lambda \cdot\abs{\gamma[0,t]}
\end{equation}
for all $t\in[0,1]$, where $\abs{\gamma[0,t]}$ represents the arc length.
\end{definition}

\begin{lemma}\label{lem-solve-divergence-eqn}
Assume $\Omega$ is a bounded Reifenberg flat domain with $\p\Omega=\cD\cup\cN, \cD, \cN \neq \emptyset$, satisfying Assumption \ref{ass-RF} $(\gamma)$, $\gamma<1/2$.
Let $p>1$ and $p_*$ be given as in \eqref{eqn-def-p*}.
Then for every $f\in L_{p_{*}}(\Omega)$, there exists $\phi=(\phi_1,\cdots,\phi_d)\in (W^{1,p_*}_\cN(\Omega))^d$ ($\subset(L_p(\Omega))^d$, by the Sobolev embedding), such that
\begin{equation}\label{eqn-est-div}
D_i \phi_i = f \text{ in } \Omega, \quad \norm{\phi_i}_{L_p(\Omega)}\leq N\norm{f}_{L_{p_{*}}(\Omega)},
\end{equation}
where $N=N(d,p,\diam(\Omega),R_1)$ is a constant.
\end{lemma}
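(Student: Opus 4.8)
\emph{Proof strategy.} The plan is to reduce the statement to the classical Bogovskii solution operator for the divergence equation on a John domain, and to exploit the hypothesis $\cD\neq\emptyset$ in order to dispose of the usual mean-zero compatibility condition: the excess mass $\int_\Omega f$ will be pushed out through a small ball glued to the Dirichlet part of $\p\Omega$. Recall from \cite[Remark~3.3]{DK18-JDE} that $\Omega$ is a bounded John domain, with John constant controlled by $d$, $\diam(\Omega)$, and $R_1$.

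First I would fix a point $z_0\in\cD$ possessing a relative neighborhood in $\p\Omega$ contained in $\cD$ (such a point exists by Assumption \ref{ass-RF} $(ii)$ when $\Gamma\neq\emptyset$, and trivially otherwise), together with a radius $\rho\in(0,R_1]$ so small that $\overline{B_\rho(z_0)}\cap\cN=\emptyset$ and $\p\Omega\cap B_\rho(z_0)\subset\cD$; we may also place ourselves in the coordinate system associated with $(z_0,\rho)$ from Assumption \ref{ass-RF} $(\gamma)$. Set $B:=B_\rho(z_0)$ and $\widetilde\Omega:=\Omega\cup B$. I would then verify that $\widetilde\Omega$ is again a bounded John domain, with constants controlled by those of $\Omega$: for $x\in\Omega$ one keeps the John curve of $\Omega$ joining $x$ to the John center and uses $\widetilde\Omega^c\subset\Omega^c$, while for $x\in B$ one first follows the radial segment toward $z_0$ — which satisfies the carrot condition \eqref{eqn-John} because $\widetilde\Omega^c\subset B^c$ — and then concatenates with a fixed short curve running from $z_0$ into the interior of $\Omega$ followed by $\Omega$'s John curve.

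Next I would extend $f$ across $B\setminus\overline\Omega$. Since $z_0\in\p\Omega$ and $\gamma<1/2$, Reifenberg flatness yields $|B\setminus\overline\Omega|\ge c(d)\rho^d>0$; setting $g:=-|B\setminus\overline\Omega|^{-1}\int_\Omega f$ on $B\setminus\overline\Omega$ and $\widetilde f:=f\bI_\Omega+g\bI_{B\setminus\overline\Omega}$, one obtains $\widetilde f\in L_{p_*}(\widetilde\Omega)$ with $\int_{\widetilde\Omega}\widetilde f=0$ and, using $\bigabs{\int_\Omega f}\le|\Omega|^{1-1/p_*}\norm{f}_{L_{p_*}(\Omega)}$ together with the fixed lower bound for $|B\setminus\overline\Omega|$, $\norm{\widetilde f}_{L_{p_*}(\widetilde\Omega)}\le N\norm{f}_{L_{p_*}(\Omega)}$. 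Invoking the Bogovskii-type solvability of the divergence equation on John domains (see \cite{ADM}) on $\widetilde\Omega$, there is $\widetilde\phi\in(W^{1,p_*}_0(\widetilde\Omega))^d$ with $D_i\widetilde\phi_i=\widetilde f$ in $\widetilde\Omega$ and $\norm{\widetilde\phi}_{W^{1,p_*}(\widetilde\Omega)}\le N\norm{\widetilde f}_{L_{p_*}(\widetilde\Omega)}$, where $N$ depends only on $d$, $p_*$, and the John character of $\widetilde\Omega$.

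Finally I would take $\phi:=\widetilde\phi|_\Omega$, so that $D_i\phi_i=f$ in $\Omega$ and $\norm{\phi_i}_{W^{1,p_*}(\Omega)}\le N\norm{f}_{L_{p_*}(\Omega)}$; the bound $\norm{\phi_i}_{L_p(\Omega)}\le N\norm{f}_{L_{p_*}(\Omega)}$ then follows from the Sobolev embedding $W^{1,p_*}(\Omega)\hookrightarrow L_p(\Omega)$, valid because a bounded Reifenberg flat domain is a $W^{1,p_*}$-extension domain. To see $\phi\in(W^{1,p_*}_\cN(\Omega))^d$, I would approximate $\widetilde\phi$ in $W^{1,p_*}(\widetilde\Omega)$ by functions $\eta\in C^\infty_c(\widetilde\Omega)$: since $\supp\eta$ is a compact subset of the open set $\widetilde\Omega$ and $\cN\subset\p\Omega\setminus\overline B\subset\p\widetilde\Omega$, each such $\eta$ (extended by zero) is smooth on $\overline\Omega$ and vanishes in a neighborhood of $\cN$, so $\eta|_\Omega\in C^\infty_\cN(\Omega)$ and $\eta|_\Omega\to\phi$ in $W^{1,p_*}(\Omega)$. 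The step requiring the most care is the verification that $\widetilde\Omega$ remains a John domain with quantitative constants, together with the bookkeeping that lands $\widetilde\phi|_\Omega$ in $W^{1,p_*}_\cN(\Omega)$ rather than merely in $W^{1,p_*}(\Omega)$; everything else is a routine combination of H\"older's inequality, the Sobolev embedding, and the cited divergence-equation solvability.
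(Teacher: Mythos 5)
Your proof is correct and follows the same high-level strategy as the paper --- extend $\Omega$ past the Dirichlet part of the boundary to a John domain $\widetilde\Omega$, extend $f$ by a constant on $\widetilde\Omega\setminus\Omega$ to kill the mean, invoke \cite[Theorem 4.1]{ADM} on $\widetilde\Omega$, and restrict back --- but the construction of $\widetilde\Omega$ is genuinely different. The paper fixes $x_0\in\Gamma$, forms the slab $\Omega_{R_1}(x_0)\cap\{x_{02}+\tfrac{23}{32}R_1<y_2<x_{02}+\tfrac{25}{32}R_1\}$ lying well inside the Dirichlet portion, takes its Whitney decomposition $\cup_k Q_k$, and sets $\hat\Omega=\Omega\cup(\cup_k\hat Q_k)$ with $\hat Q_k$ the eightfold dilates of $Q_k$; verifying the John property then requires tracking paths through the family of dilated cubes. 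Your version glues a single ball $B_\rho(z_0)$ centered at a point in the relative interior of $\cD$, which makes the John verification much cleaner (one radial leg inside $B$ using $\widetilde\Omega^c\subset B^c$, then a connector into $\Omega$ and $\Omega$'s own John curve). Both constructions deliver $\cN\subset\partial\widetilde\Omega$ so that $W^{1,p_*}_0(\widetilde\Omega)$ restricts into $W^{1,p_*}_{\cN}(\Omega)$, and both then finish identically via the Sobolev embedding on the extension domain $\Omega$. Your route is the more elementary of the two.

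One detail worth tightening: for the constant $N$ to depend only on $(d,p,\diam\Omega,R_1)$ as claimed, you cannot merely take ``$\rho$ so small that $\overline{B_\rho(z_0)}\cap\cN=\emptyset$''; you must choose $z_0$ and $\rho$ with quantitative control. This is exactly what Assumption \ref{ass-RF} $(ii)$ provides when $\Gamma\neq\emptyset$: with $x_0\in\Gamma$ and $\gamma<1/2$, the set $\partial\Omega\cap B_{R_1}(x_0)\cap\{y_2>x_{02}+\gamma R_1\}$ is a relatively open subset of $\cD$ of ``size'' $\sim R_1$, so one can pick $z_0$ there with, say, $\rho=R_1/8$. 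With that, the John constant of $\widetilde\Omega$, the lower bound $|B_\rho(z_0)\setminus\overline\Omega|\gtrsim R_1^d$, and hence the final constant all depend only on the admitted parameters. As written, this quantification is left implicit, but it is an easily fillable gap rather than a flaw in the argument.
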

\begin{proof}
Noting that $\cD,\cN\neq\emptyset$, we can choose a point $x_0\in\Gamma$. Taking the coordinate system in $B_{R_1}(x_0)$ from Assumption \ref{ass-RF}, we extend $\Omega$ beyond $\cD$ as follows.

We first take the Whitney decomposition of the open set
$$
\widetilde{\Omega}_{R_1}:=\Omega_{R_1}(x_0)\cap\set{x_{02}+23/32R_1 < y_2 < x_{02}+25/32R_1}
$$
as in \cite[Chapter IV]{St}, i.e., $\widetilde{\Omega}_{R_1}=\cup_k Q_k$, where the disjoint cubes $Q_k$ satisfy
$$
\diam(Q_k)\leq \dist(\overline{Q_k},(\widetilde{\Omega}_{R_1})^c) \leq 4\diam(Q_k).
$$
Denote the center of $Q_k$ to be $x_k$. We extend $Q_k$ to $\hat{Q}_k$ in the way that $$\hat{Q}_k - x_k = 8(Q_k - x_k).
$$
Let $\hat{\Omega}=\Omega\cup(\cup_k \hat{Q}_k)$. It is easy to see that
$$
\cN \subset \p\hat{\Omega}, \quad C_1 R_1^d\leq\abs{\hat{\Omega}\setminus\Omega}\leq C_2 R_1^d,
$$
where $C_1, C_2$ are constants only depending on the space dimension $d$. Next, we check that $\hat{\Omega}$ is still a John domain, i.e., for any $\hat{x} \in \hat{\Omega}$ we construct the path connecting $\hat{x}$ and $x_0$, which satisfies the conditions in Definition \ref{def-John}.

{\em Case 1}: $\hat{x} \in \Omega$. Noting that any Reifenberg flat domain is also a John domain, we take the same path as in Definition \ref{def-John}. Noting that for any $x\in\Omega$, $\dist(x,\hat{\Omega}^c)\geq \dist(x,\Omega^c)$, \eqref{eqn-John} is satisfied with the same $\lambda$.

{\em Case 2}: $\hat{x} \in \hat{\Omega}\setminus\Omega$. We assume that $\hat{x}$ lies in the extended cube $\hat{Q}_k$ with center $x_k$ and $\diam(\hat{Q}_k)=8r_k$. Let $x_0$ be the point defined in Definition \ref{def-John}. If $x_0\in Q_k$, we can take the straight line path. In this case, \eqref{eqn-John} is satisfied with the constant $7/(9\sqrt{d})$. Now, if $x_0\notin Q_k$, we first consider the straight line path
$$\gamma_1: [0,1/2]\mapsto \hat{Q}_k,\quad \gamma_1(0)=\hat{x},\quad \gamma_1(1/2)=x_k.$$
Since $\hat{x}\notin Q_k$, we have
$$\frac{1}{2}r_k\leq\abs{\gamma_1[0,1/2]}\leq 4\sqrt{d}r_k.$$
Noting that $x_k\in \Omega$, we consider the re-parametrized path coming from Definition \ref{def-John}:
$$\gamma_2:[0,1/2]\mapsto\Omega, \quad \gamma_2(0)=x_k,\quad \gamma_2(1/2)=x_0.$$
Take $\gamma =\gamma_1\circ\gamma_2$ be the path connecting $\gamma_1$ and $\gamma_2$. Now, when $t\in[0,1/2]$, again \eqref{eqn-John} is satisfied with the constant $1/\sqrt{d}$. When $t\in[1/2,1]$, we consider the following two cases: $\gamma(t)\in Q_k$ or $\gamma(t)\notin Q_k$.

If $\gamma(t)\in Q_k$, we have
\begin{equation*}
\begin{split}
\frac{\abs{\gamma[0,t]}}{\dist(\gamma(t),\hat{\Omega}^c)}
&\leq \frac{\abs{\gamma[0,1/2]}}{\dist(\gamma(t),\hat{\Omega}^c)} + \frac{\abs{\gamma[1/2,t]}}{\dist(\gamma(t),\Omega^c)}\\
&\leq \frac{4\sqrt{d}r_k}{7r_k/2} + \lambda^{-1}= \frac{8\sqrt{d}}{7}+\lambda^{-1}.
\end{split}
\end{equation*}
When $\gamma(t)\notin Q_k$, we have $\abs{\gamma[1/2,t]}\geq r_k/2$. Hence,
\begin{equation*}
\begin{split}
\frac{\abs{\gamma[0,t]}}{\dist(\gamma(t),\hat{\Omega}^c)}
&=\frac{\abs{\gamma[0,t]}}{\abs{\gamma[1/2,t]}}\cdot\frac{\abs{\gamma[1/2,t]}}{\dist(\gamma(t),\hat{\Omega}^c)}\\
&=(1+\frac{\abs{\gamma[0,1/2]}}{\abs{\gamma[1/2,t]}}) \cdot \frac{\abs{\gamma[1/2,t]}}{\dist(\gamma(t),\hat{\Omega}^c)}\\
&\leq (1+\frac{4\sqrt{d}r_k}{r_k/2})\cdot \frac{\abs{\gamma[1/2,t]}}{\dist(\gamma(t),\hat{\Omega}^c)}\\
&\leq (1+8\sqrt{d})\lambda^{-1}.
\end{split}
\end{equation*}
With all above, we have proved that $\hat{\Omega}$ is still a John domain. Now we extend $f$ to $\hat{\Omega}$ as
\begin{equation*}
\begin{cases}
\hat{f}:= f& \text{in }\Omega,\\
\hat{f}:= -\frac{1}{\abs{\hat{\Omega}\setminus\Omega}}\int_\Omega f & \text{in }\hat{\Omega}\setminus\Omega.
\end{cases}
\end{equation*}
Then we have
\begin{equation*}
\int_{\hat{\Omega}} \hat{f} = 0,\quad \norm{\hat{f}}_{L_{p_{*}}(\hat{\Omega})}\leq N(R_1,\abs{\Omega})\norm{f}_{L_{p_{*}}(\Omega)}.
\end{equation*}
Since $\hat{\Omega}$ is a John domain, we apply the result in \cite[Theorem 4.1]{ADM} to find $\phi=(\phi_1,\cdots,\phi_d)\in (W^{1,{p_{*}}}_0(\hat{\Omega}))^d$ satisfying
\begin{equation*}
D_i\phi_i = \hat{f}\quad \text{in }\hat{\Omega}, \quad \norm{\phi_i}_{W^{1,{p_{*}}}(\hat{\Omega})}\leq N(\diam(\hat{\Omega}),d,p)\norm{\hat{f}}_{L_{p_{*}}(\hat{\Omega})}.
\end{equation*}

Now by Sobolev inequalities and our construction of $\hat{\Omega},\hat{f}$, we obtain that $\phi\in (W^{1,p_{*}}_\cN(\Omega))^d$, and
\begin{equation*}
\norm{\phi_i}_{L_p(\Omega)}\leq N(\diam(\Omega),R_1,d,p)\norm{\hat{f}}_{L_{p_{*}}(\hat{\Omega})}.
\end{equation*}
The lemma is proved
\end{proof}

Now we are ready to give the proof of Theorem \ref{thm-bounded-domain}.
\begin{proof}[Proof of Theorem \ref{thm-bounded-domain}]
Using Lemma \ref{lem-solve-divergence-eqn}, for every $f\in L_{p_{*}}(\Omega)$, we can find $$(\phi_i)_{i=1}^d\in (W^{1,p_{*}}_\cN(\Omega))^d\subset (L_p(\Omega))^d$$ satisfying \eqref{eqn-est-div}. Now we consider the following problem:
\begin{equation}\label{eqn-no-div}
\begin{cases}
Lu= D_i (f_i+\phi_i) &\text{in }\, \Omega,\\
Bu =(f_i+\phi_i) \, n_i & \text{on }\, \cN,\\
u=0 & \text{on }\, \cD.
\end{cases}
\end{equation}
Since $\phi =0$ on $\cN$, one can easily check that any solution to \eqref{eqn-no-div} is also a solution to \eqref{180417@eq2}. Hence, without loss of generality, we may assume $f=0$.

We aim to use the Fredholm alternative. For this, we first introduce some operators. From Theorem \ref{thm-well-posedness}, for fixed large enough $\lambda$, we can find a unique weak solution $u\in W^{1,p}_\cD(\Omega)$ to \eqref{eqn-main-large-lambda} satisfying \eqref{eqn-general}, and hence \eqref{eqn-a-priori-no-dependence}.

We write $R(\lambda,L)$ as this solution operator, i.e.,
$$
R(\lambda,L):(L_p(\Omega))^d\times L_p(\Omega)\mapsto W^{1,p}_\cD(\Omega),\quad R(\lambda,L)(f_i,f)=u.
$$
In particular, for any $L_p$ function $f$, we write
$$R_\lambda(f):=R(\lambda,L)(0,f).$$
From \eqref{eqn-a-priori-no-dependence}, $R_\lambda$ is a bounded linear operator from $L_p(\Omega)$ to $W^{1,p}_\cD(\Omega)$. Denote $I$ as the compact embedding from $W^{1,p}_\cD(\Omega)$ to $L_p(\Omega)$. Now, we write
$$
T:W^{1,p}_\cD(\Omega)\rightarrow W^{1,p}_\cD(\Omega),\quad T(u)= R_\lambda\circ I(u).
$$
From our construction, $T$ is a compact operator. Noting that we have assumed that $f=0$, applying the operator $R(\lambda,L)$ to both sides of
$$(L-\lambda)u +\lambda u= D_i f_i,$$
we can rewrite \eqref{180417@eq2} as
\begin{equation}\label{eqn-fredholm}
(Id + \lambda T)u=R(\lambda,L)(f_i,0).
\end{equation}
By the Fredholm alternative, \eqref{eqn-fredholm} has a unique $W^{1,p}_\cD(\Omega)$ solution satisfying
$$
\norm{u}_{W^{1,p}(\Omega)}\leq N\norm{f_i}_{L_p(\Omega)},
$$
if the following homogeneous equation only has zero solution
\begin{equation}\label{eqn-zero}
\begin{cases}
Lv = 0& \text{in }\Omega,\\
Bv=0& \text{on }\cN,\\
v=0& \text{on }\cD.
\end{cases}
\end{equation}
When $v\in W^{1,2}(\Omega)$, this is true due to the weak maximum principle, noting that the proof in \cite[Section~8.1]{GT} actually shows that $\sup_{\overline{\Omega}}\abs{v}$ has to be achieved at the Dirichlet boundary. Hence the uniqueness of \eqref{eqn-zero} is proved for the case $p\geq 2$.

When $p<2$, we can use Theorem \ref{thm-well-posedness} and a bootstrap argument to improve the regularity. Suppose $v\in W^{1,p}(\Omega)$ is a solution to \eqref{eqn-zero}. Take $\lambda$ large enough, noting that $v$ is also a $W^{1,p}$ solution to
\begin{equation*}
\begin{cases}
(L-\lambda) v= -\lambda v &\text{in }\Omega,\\
Bv=0& \text{on }\cN,\\
v=0& \text{on }\cD.
\end{cases}
\end{equation*}
By the Sobolev embedding, $-\lambda v\in L_{pd/(d-p)}(\Omega)$. Take $p^{*}=\min\set{pd/(d-p),2}$. By the uniqueness of $W^{1,p^{*}}$ solutions in Theorem \ref{thm-well-posedness}, we obtain $v \in W^{1,p^{*}}$. Repeating this process if needed, in finite steps, we can reach $v\in W^{1,2}$. Hence we can use the weak maximum principle to deduce $v=0$ as before. This finishes the proof of the uniqueness. Hence, by the Fredholm alternative, Theorem \ref{thm-bounded-domain} is proved.
\end{proof}

\end{document}